\newtheorem{theorem}{Theorem}[section]
\newtheorem{definition}{Definition}[section]
\newtheorem{proposition}{Proposition}[section]
\newtheorem{lemma}{Lemma}[section]
\newtheorem{remark}{Remark}[section]
\newtheorem{question}{Question}[section]
\newtheorem{corollary}{Corollary}[section]
\def\rd{\mathrm d}
\def\be{\begin{equation}}
\def\ee{\end{equation}}
\title[Slow Entropy For Abelian Actions]{Slow Entropy For Abelian Actions}
\author{Changguang Dong}
\address{Chern Institute of Mathematics and LPMC, Nankai University, Tianjin 300071, China}
\email{dongchg@nankai.edu.cn}
\author{Qiujie Qiao}
\address{Chern Institute of Mathematics and LPMC, Nankai University, Tianjin 300071, China}
\email{qiujieqiao@mail.nankai.edu.cn}
\date{\today}
\subjclass[2020]{Primary 37A35; Secondary 37C85, 37C40}
\keywords{Slow entropy, $\mathbb{R}^k$-actions, Lyapunov exponents.}
\begin{document}

\begin{abstract}
We calculate slow entropy type invariant introduced by A. Katok and J.-P. Thouvenot in \cite{KT97} for higher rank smooth abelian actions for two leading cases: when the invariant measure is absolutely continuous and when it is hyperbolic. We generalize Brin-Katok local entropy Theorem to the abelian action for the above two cases. We also prove that, for abelian actions, the transversal Hausdorff dimensions are universal, i.e. dependent on the action but not on any individual element of the action.
\end{abstract}

\maketitle

\tableofcontents

\section{Introduction and Main Results}\label{set: introduction}

Metric entropy is an important numerical invariant in dynamical systems. 
It reflects exponential orbit growth rate of a system in measure theoretic sense, 
which is well studied in smooth ergodic theory for $\mathbb{Z}$- and $\mathbb{R}$- actions. 
However, if we consider higher rank abelian actions, and want to measure the complexity of 
such system, the direct extension of metric entropy fails to be useful. 
In most cases, it is equal to zero unless some or all transformations have infinite metric entropy, 
see \cite{KKH14,OW83,Lin01}. 
So, there is a need to find some other entropy type invariants.

One natural way is to change the normalization and measure exponential growth rate against 
the radius of the ball in the acting group instead of the volume of the ball. 
Very similar to Katok's definition in \cite{Ka80}, slow entropy type invariants for 
abelian actions have been defined in \cite{KT97}, 
and further studied by A. Katok, S. Katok and F. Rodriguez Hertz in \cite{KKH14}. 
In the latter paper, they consider the case of Cartan actions on the torus and 
find some connection with Fried average entropy (see \cite{KKH14} and the references therein).

There are various types of slow entropy in the literature, in contrast to the classical measure-theoretic entropy and topological entropy. Roughly speaking, slow entropy provides a much more precise measurement of complexity for
both homogeneous and non-homogeneous dynamical systems, 
with representative results including the following:
\begin{itemize}
\item Homogeneous systems: 
\begin{itemize}
\item[(1)] Fried average entropy and slow entropy for actions of higher-rank abelian groups \cite{KKH14};
\item[(2)] Parabolic flows and quasi-unipotent flows on homogeneous spaces \cite{KVW19};
\item[(3)] Abelian unipotent actions on finite-volume homogeneous spaces \cite{KKVWpp}.
\end{itemize}
\item Non-homogeneous systems: 
\begin{itemize}
\item[(1)] Certain classes of smooth mixing flows on surfaces \cite{K18};
\item[(2)] Genericity and rigidity of transformations characterized by slow entropy \cite{A21};
\item[(3)] Flexibility in the values of upper and lower polynomial slow entropy for rigid transformations \cite{BKWpp};
\item[(4)] Topological and measure-theoretic slow entropy of Anosov-Katok diffeomorphisms \cite{BKWpp-2}.
\end{itemize}
\end{itemize}
For a more comprehensive overview of the history, background
and further references on slow entropy, we refer the reader to the survey article \cite{KKWpp}. From now on, we will speak of the slow entropy for abelian actions defined in \cite{KT97} 
as simply the slow entropy.

In this paper, we consider this slow entropy for abelian actions of more general type. 
An explicit formula is given for that, which is our main result. 
Before that, let's make some basic settings throughout this paper. 

Let $(M,d)$ be a compact smooth manifold with a metric $d$, $m=$dim$M$, 
and $\alpha: \mathbb{R}^k\to \text{Diff}^{1+r}(M)  (r>0)$ be a locally free $\mathbb{R}^k$-action 
on $M$; $\mu$ is an invariant Borel probability measure for $\alpha$, 
and also assume it is ergodic; let $p$ be an arbitrary norm on $\mathbb{R}^k$. 
We say, an invariant measure $\mu$ is {\bf hyperbolic} if there exists $m-k$ nontrivial exponents, 
equivalently there exists a ${\bf t}\in \mathbb{R}^k$ such that $\alpha({\bf t})$ has $m-k$ nonzero exponents. 
Let $\{\chi_i\}_{1\leq i \leq L}$ be the Lyapunov exponents in Lyapunov decomposition, and let 
$\gamma_i({\bf t})$ be the corresponding transversal Hausdorff dimension (THD) for $\chi_i({\bf t})$ 
(see sections \ref{subset:Lyapunov Exponents, Suspension, Charts} and \ref{subset:Transversal Hausdorff Dimension(THD)} 
for the detailed definition). 
Note that, by definition, $\gamma_i({\bf t})$ is defined to be $\gamma_i(-{\bf t})$ when $\chi_i({\bf t})<0$. 
Hence the domain of $\gamma_i({\bf t})$ is $\{{\bf t}\in\mathbb{R}^k: \chi_i({\bf t})\neq 0\}$. 
As a preparation for the slow entropy formula, we first give the following general result on 
the universality of THDs, which can be used independently. Let's mention here that, 
it is known by certain amount of dynamists, however there is no proof yet.

\begin{theorem}\label{THD}
As a function of ${\bf t}$, $\gamma_i({\bf t})$ is a nonnegative constant in $\{{\bf t}: \chi_i({\bf t})\neq 0\}$. Moreover, if we do not assume $\mu$ to be ergodic, then $\gamma_i({\bf t})$ is a nonnegative constant in each ergodic component of $\mu$.
\end{theorem}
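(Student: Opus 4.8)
\emph{Proof plan.} By the convention $\gamma_i(-t)=\gamma_i(t)$, the domain $\{t:\chi_i(t)\neq 0\}=\mathbb R^{k}\setminus\ker\chi_i$ is the union of the two open half-spaces $H_i^{+}=\{t:\chi_i(t)>0\}$ and $H_i^{-}=-H_i^{+}$, and the values of $\gamma_i$ on $H_i^{-}$ are forced by those on $H_i^{+}$; since $H_i^{+}$ is convex, hence connected, it is enough to prove that $\gamma_i$ is constant on $H_i^{+}$. As a warm-up that also isolates the mechanism, note that for $c>0$ the diffeomorphism $\alpha(ct)$ has the same orbits as $\alpha(t)$ up to a linear time change, the same Oseledets subbundles, and the same invariant laminations, so the conditional measures of $\mu$ attached to $E_i$ are literally the same; since $\gamma_i(t)$ is defined (Section~2.2) as a Hausdorff dimension of such conditional measures, $\gamma_i(ct)=\gamma_i(t)$, i.e.\ $\gamma_i$ is constant along rays, and it remains to compare non-proportional $t,t'\in H_i^{+}$.

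The core of the argument is to show that, although the single transformation $\alpha(t)$ used to define $\gamma_i(t)$ varies with $t$, the conditional measure from which the dimension is read off does not. I would proceed in two layers. Fix first a Weyl chamber $\mathcal C$, that is, a connected component of $\mathbb R^{k}\setminus\bigcup_{j}\ker\chi_j$, with $\mathcal C\subset H_i^{+}$. For $t,t'\in\mathcal C$ every $\chi_j$ keeps a fixed sign, so $E^{u}(\alpha(t))=\bigoplus_{\chi_j(t)>0}E_j=E^{u}(\alpha(t'))$; because $\alpha$ is abelian and $\mu$ is $\alpha$-invariant, $\alpha(t')$ permutes the $\alpha(t)$-unstable Pesin leaves and preserves $\mu$, so the unstable laminations of $\alpha(t)$ and $\alpha(t')$ coincide and so do the conditional measures $\mu^{u}_{x}$. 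To single out the contribution of the \emph{individual} bundle $E_i$ I would invoke that for a higher-rank abelian action the conditional measure along an unstable leaf is organized by the coarse Lyapunov foliations, which are integrable, $\alpha$-invariant, and \emph{independent of $t$}, and that inside the coarse class of $\chi_i$ the exponents are fixed positive multiples of $\chi_i(t)$, hence ordered the same way for all $t\in H_i^{+}$; thus the Ledrappier--Young / Barreira--Pesin--Schmeling flag relevant to $\chi_i$, and the exact pointwise dimension it carries, do not change with $t$. This gives $\gamma_i$ constant on $\mathcal C$.

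There remains the passage between Weyl chambers inside $H_i^{+}$. Any two points of the connected set $H_i^{+}$ are joined by a path crossing the walls $\{\chi_j=\chi_{j'}\}$ (and the walls $\ker\chi_j$ meeting $H_i^{+}$) transversally in finitely many points, so it suffices to show that $\gamma_i$ does not jump across one wall $\mathcal H$. On $\mathcal H$ two Oseledets bundles of the action merge, so the flags of elements in the two adjacent chambers are two refinements of the coarser flag of an element $t_0\in\mathcal H$, and one wants to conclude---using the exact dimensionality of the conditional measures together with a further element of the action that still separates, as functionals, the two merging bundles---that the dimension attached to $E_i$ is the same on both sides. I expect this ``no jump across a wall'' step to be the main obstacle: it is exactly the point where the a~priori dependence of the Ledrappier--Young flag on the magnitude-ordering of the exponents must be shown irrelevant to the $E_i$-piece, and where the higher-rank abelian structure and the exact dimensionality of the conditional measures are used most essentially. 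Once it is settled, $\gamma_i$ is constant on $H_i^{+}$, hence on all of $\{t:\chi_i(t)\neq 0\}$ via $\gamma_i(-t)=\gamma_i(t)$; nonnegativity is immediate, a dimension being $\ge 0$. Finally, if $\mu$ is not assumed ergodic, take its ergodic decomposition $\mu=\int\mu_\omega\,d\nu(\omega)$: the Oseledets splitting, the Pesin laminations, and the associated conditional measures disintegrate over $\nu$, so the argument above applied to $\mu_\omega$ shows that for $\nu$-a.e.\ $\omega$ the THD of the $i$-th exponent of $\mu_\omega$ is independent of $t$, the only remaining point being the routine $\nu$-measurability of these objects.
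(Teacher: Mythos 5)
Your reduction to the half-space $H_i^{+}$, your observation that nonnegativity is automatic, and your remark about the ergodic decomposition are all fine, and your proof of $\gamma_i(ct)=\gamma_i(t)$ for $c>0$ (unchanged laminations and conditional measures, hence unchanged dimensions) is a clean alternative to the entropy computation the paper uses for the analogous step. But there is a genuine gap in the two core steps, and you in fact flag it yourself.

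First, even the ``constant on one Weyl chamber'' step is not closed. Within a Weyl chamber (in the sense you and the paper both use: a component of $\mathbb R^{k}\setminus\bigcup_j\ker\chi_j$) the \emph{signs} of the $\chi_j$ are fixed but their \emph{magnitude ordering} is not, so the Ledrappier--Young flag $W^1\subset W^2\subset\cdots\subset W^u(t)$ and the numbers $\delta_j(t)$ it produces genuinely rearrange as $t$ crosses a hyperplane $\{\chi_j=\chi_{j'}\}$ \emph{inside} the chamber. Saying that the coarse Lyapunov foliations and the conditional measure on the whole unstable leaf are $t$-independent is true, but it does not by itself isolate the piece of the dimension attached to $E_i$: that piece is read off through a flag whose intermediate terms shuffle. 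You would need to argue that the $\delta$-increments are invariant under such a shuffle, and that is precisely a ``no jump across a wall'' statement already inside the chamber. Second, and more seriously, the passage between Weyl chambers is stated as ``the main obstacle\ldots\ once it is settled'' without a mechanism for settling it. As written, the proposal reduces the theorem to the hardest part and then asserts it, so it is not a proof.

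For comparison, the paper's argument does not try to show directly that ``the conditional measure attached to $E_i$'' is unchanged. Instead it linearizes the problem via entropy: using a partition $\xi$ subordinate to $W^i$ that is simultaneously increasing for two commuting elements $f=\alpha(t)$, $g=\alpha(s)$ (Proposition~3.2, a variant of Hu's Proposition~8.1), one gets the additivity $H_\mu(\xi\,|\,fg\xi)=H_\mu(\xi\,|\,f\xi)+H_\mu(\xi\,|\,g\xi)$ (Proposition~3.3), and combined with Ledrappier--Young's identity $H_\mu(\xi\,|\,f\xi)=h_i(f)=\sum_{j\le i}\gamma_j(f)\chi_j(f)$ this yields the linear relation
\[
\sum_{j\le i}\gamma_j(t+s)\,\chi_j(t+s)=\sum_{j\le i}\gamma_j(t)\,\chi_j(t)+\sum_{j\le i}\gamma_j(s)\,\chi_j(s).
\]
Constancy on rays, on sub-chambers where the ordering is fixed, across walls $\{\chi_j=\chi_{j'}\}$, and across Lyapunov hyperplanes is then extracted by induction on $i$ together with a boundedness/limiting trick (the $\gamma_j$ are bounded by $\dim E_j$, so any nonzero discrepancy blows up under scaling and gives a contradiction). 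This is exactly the tool that lets one ``cross a wall'': on the wall two exponents agree, one applies the identity with one point on the wall and one off it, and the cancellation forces the individual $\gamma$'s to match. If you want to complete your approach you will have to supply something playing the same role as that linear relation; the purely geometric picture of ``the same conditional measure'' does not give you a handle on the reshuffled flag.
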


Now we are ready to state our main result:

\begin{theorem}[{{\bf{Slow Entropy Formula}}}]\label{mainresult}
For abelian action $\alpha$, assume $\mu$ is either hyperbolic or 
absolutely continuous with respect to a volume form on $M$, then 
\begin{equation}
sh_\mu(\alpha,p)=\sum_{i=1}^{L}\gamma_i \max_{{\bf t}: p({\bf t})\leq 1} \chi_i({\bf t}).
\end{equation}
\end{theorem}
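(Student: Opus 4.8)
The strategy is to pass from the slow entropy to a local (pointwise) quantity via the abelian version of the Brin--Katok local entropy theorem proved earlier in the paper, and then to evaluate that quantity using Oseledets--Pesin theory for the cocycle $d\alpha$ together with the transversal Hausdorff dimensions of Section~2. Write $B_\alpha(x,R,\epsilon)=\{y\in M:\ d(\alpha(t)x,\alpha(t)y)<\epsilon\ \text{for all}\ t\ \text{with}\ p(t)\le R\}$ for the $\mathbb{R}^k$-Bowen ball. The generalized Brin--Katok theorem identifies $sh_\mu(\alpha,p)$ with $\lim_{\epsilon\to0}\limsup_{R\to\infty}\frac{-\log\mu(B_\alpha(x,R,\epsilon))}{R}$ for $\mu$-a.e.\ $x$, and by ergodicity this is $\mu$-a.e.\ a constant; so it suffices to estimate $\mu(B_\alpha(x,R,\epsilon))$ for $x$ in a set of large measure.

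The heart of the matter is a geometric description of $B_\alpha(x,R,\epsilon)$. By Oseledets' theorem for the $\mathbb{R}^k$-action, for $\mu$-a.e.\ $x$ there is a splitting $T_xM=\bigoplus_{i} E_i(x)$ such that each Lyapunov exponent $\chi_i$ is a \emph{linear} functional of $t\in\mathbb{R}^k$ (the cocycle is additive along the commuting flows). Fix $\delta>0$ and a Pesin (regular) set $\Lambda_\delta$ with $\mu(\Lambda_\delta)>1-\delta$ on which the splitting, its angles, and the tempering factors are uniform, and work in Lyapunov charts there. A point $y$ lies in $B_\alpha(x,R,\epsilon)$ essentially when, writing $y-x$ in the chart with components $v_i\in E_i(x)$, one has $\|v_i\|\,e^{\chi_i(t)}\lesssim\epsilon$ for every $t$ with $p(t)\le R$; since $\chi_i$ is linear and the unit ball of $p$ is symmetric, $\max_{p(t)\le R}\chi_i(t)=R\,\max_{p(t)\le1}\chi_i(t)$, a nonnegative number vanishing exactly on the directions with $\chi_i\equiv0$ (in particular the $k$ orbit directions). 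Hence, up to a bounded multiplicative distortion, $B_\alpha(x,R,\epsilon)$ is a box whose side in the $E_i$-direction is $\asymp\epsilon\,e^{-R\max_{p(t)\le1}\chi_i(t)}$.

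It remains to compute the $\mu$-measure of such a box. This is where the hypothesis that $\mu$ is absolutely continuous or hyperbolic enters: in the first case the conditional measures of $\mu$ along the Lyapunov foliations are Lebesgue (Pesin's absolute continuity), with $\gamma_i=\dim E_i$; in the second case the transversal structure along $E_i$ carries the dimension $\gamma_i$ in the sense recorded in Section~2, and $\mu$ has a local product structure adapted to the splitting with the required exact-dimensionality. In either case one obtains $\mu(B_\alpha(x,R,\epsilon))\asymp_{\delta,\epsilon}\prod_i\bigl(\epsilon\,e^{-R\max_{p(t)\le1}\chi_i(t)}\bigr)^{\gamma_i}$ for $x\in\Lambda_\delta$, whence $\frac{-\log\mu(B_\alpha(x,R,\epsilon))}{R}\to\sum_i\gamma_i\max_{p(t)\le1}\chi_i(t)$ as $R\to\infty$, with an $O(|\log\epsilon|/R)$ error; letting $\epsilon\to0$ and then $\delta\to0$ gives the formula. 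By Theorem~\ref{THD} the $\gamma_i$ are genuine constants, independent of $t$ and of the element of the action, so the right-hand side is unambiguous; the terms with $\chi_i\equiv0$ drop out because $\max_{p(t)\le1}\chi_i(t)=0$ there.

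I expect the main obstacle to be the measure estimate of step three in the hyperbolic case: reconciling the (possibly asymmetric and fractal) conditional measures along the individual finest Lyapunov subspaces with the clean product formula $\mu(B_\alpha(x,R,\epsilon))\asymp\prod_i(\cdot)^{\gamma_i}$, uniformly over $x\in\Lambda_\delta$, and then handling the interchange of the limits $R\to\infty$, $\epsilon\to0$, $\delta\to0$. A secondary point, needed to split the final equality into $\le$ and $\ge$, is that the box picture must be genuinely two-sided --- an inscribed box and a circumscribed box of comparable sides --- which again relies on the uniform Pesin estimates on $\Lambda_\delta$ and on the negligibility of the subexponential tempering errors at the scale $\frac1R\log$.
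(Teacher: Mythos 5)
Your proposal has a genuine gap precisely at the step where you invoke ``Brin--Katok.'' Theorem~\ref{bk}, the abelian Brin--Katok statement established earlier, says only that the pointwise quantity $\lim_{\epsilon\to0}\lim_{s\to\infty}\frac{-\log\mu(B(\alpha,F^p_s,x,\epsilon))}{s}$ exists $\mu$-a.e.\ and equals $\sum_i\gamma_i\max_{p(t)\le1}\chi_i(t)$. It does \emph{not} identify that pointwise limit with $sh_\mu(\alpha,p)$, which is defined through the covering numbers $S_d(\alpha,F^p_s,\epsilon,\delta)$. The passage from the Bowen-ball measure asymptotics to the covering-number asymptotics is exactly what Theorem~\ref{mainresult} asserts and is not a tautology; the paper supplies it in Section~7 by taking (via Egorov on the good set of Theorem~\ref{bk}) a compact $L$ with $\mu(L)>1-\delta$ on which
\[
e^{-s(\Delta+\theta)}\le\mu(B(\alpha,F^p_s,x,\epsilon))\le e^{-s(\Delta-\theta)}
\]
holds uniformly, and then deriving the two-sided bound on the minimal covering number: a mass-division lower bound $\#B_s\ge(1-\delta)e^{s(\Delta-\theta)}$ and an upper bound $\#B_s\le C_d\,e^{s(\Delta+\theta)}$ obtained from a maximal $\epsilon$-separated set for $d_{F^p_s}$ with bounded covering multiplicity. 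Your proposal replaces this whole step with an appeal to a theorem that does not contain it.

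The remainder of what you write --- the Lyapunov-chart picture of $B(\alpha,F^p_s,x,\epsilon)$ as a product of exponentially shrinking boxes, the linearity reduction $\max_{p(t)\le R}\chi_i(t)=R\max_{p(t)\le1}\chi_i(t)$, and the conditional-measure computation along the Lyapunov directions --- is in effect a re-sketch of the proof of Theorem~\ref{bk} itself (Lemma~\ref{le1}, Propositions~\ref{st} and~\ref{pro1}, and Sections~5--6), which you were already entitled to cite outright. The obstacle you flag in the hyperbolic case (reconciling fractal conditionals with a product formula) is indeed where the paper does the hard work, via the Barreira--Pesin--Schmeling combinatorial counting adapted to rectangles together with Rudolph's rectangular tiling, but since that is internal to Theorem~\ref{bk}, the piece actually missing from a proof of Theorem~\ref{mainresult} is the covering argument described above.
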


For the detailed definition of slow entropy, see section \ref{subset:Slow Entropy Type Invariants}. Here, one can easily see that, 
slow entropy is always finite if every element has finite metric entropy; 
and it does not vanish unless every element has zero metric entropy. 
Careful reader may find the similarity between the above formula and Ledrappier-Young formula 
for metric entropy (Theorem $C'$) in \cite{LY-85-annals-2}, 
and when $k=1$, $p$ is the standard norm, it reduces to the usual metric entropy case. 
So here we will call it slow entropy type Ledrappier-Young formula, 
though we can only prove it under some restrictions on the measure. 

Here, it is also important to note that, measure rigidity results for higher rank abelian group 
actions, especially those from \cite{KKH11} and \cite{KH16}, 
indicate that the case of absolutely continuous measure is indeed the central one. 
In this case, $\gamma_i$ will be the multiplicity of the corresponding exponent $\chi_i$, 
and the formula then becomes the slow entropy version of Pesin entropy formula.

Here, for $n\in\mathbb{N}$ and $\epsilon>0$, we define the Bowen ball $ B(\alpha, F^p_n, x, \epsilon)$ by 
$$
B(\alpha, F^p_n, x, \epsilon)=\{y\in M: d(\alpha({\bf t})x,\alpha({\bf t})y)\leq \epsilon, \,\,\forall\, {\bf t}\,\, s.t.\,\, p({\bf t})\leq n\}.
$$ 
As a by-product, we also prove the following generalized Brin-Katok local entropy Theorem:

\begin{theorem}\label{bk}
Under the same assumptions as in Theorem \ref{mainresult}, for $\mu$ a.e. $x$, 
\begin{align*}
\lim_{\epsilon\to 0}\liminf_{n\to \infty}\frac{-\log \mu(B(\alpha, F^p_n, x, \epsilon))}{n}
=\lim_{\epsilon\to 0}\limsup_{n\to \infty}\frac{-\log \mu(B(\alpha, F^p_n, x, \epsilon))}{n},
\end{align*}
and this limit is equal to 
$$
\sum_{i=1}^{L}\gamma_i \max_{{\bf t}: p({\bf t})\leq 1} \chi_i({\bf t}).
$$
\end{theorem}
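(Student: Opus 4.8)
The plan is to deduce Theorem \ref{bk} as a corollary of the machinery built for Theorem \ref{mainresult}, since the slow entropy $sh_\mu(\alpha,p)$ is itself defined (section 2.3) via covering/spanning numbers of the dynamical balls $B(\alpha, F^p_s, x, \epsilon)$, and the content of a ``local entropy'' statement is exactly that one may replace the global covering count by the pointwise asymptotics of the measure of a single Bowen ball. Concretely, I would first fix the natural filtration $F^p_s$ by $p$-balls of radius $s$ in $\mathbb{R}^k$, and observe that $B(\alpha, F^p_s, x, \epsilon) = \{y : d(\alpha(t)x, \alpha(t)y) < \epsilon \ \forall t \text{ with } p(t)\le s\}$. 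The geometry of this ball is governed by the Lyapunov decomposition: along the coarse Lyapunov subspace $E_i$, the worst contraction over the cube $\{p(t)\le s\}$ is controlled by $\max_{p(t)\le 1}\chi_i(t)$ (after rescaling $t\mapsto st$), so the ``thickness'' of the Bowen ball in the $E_i$-direction is, up to subexponential factors, $e^{-s\max_{p(t)\le 1}\chi_i(t)}$ when that max is positive, and $O(1)$ (i.e. of fixed size $\epsilon$) otherwise.

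The core estimate is then a Ledrappier--Young type argument transverse to the stable/unstable foliations. I would build, for the element realizing the relevant exponents, the increasing family of measurable partitions subordinate to the intermediate unstable (and stable) manifolds, use the entropy/dimension identities from \cite{ly}, and transfer them to the $\mathbb{R}^k$-setting via the THD's $\gamma_i$, which by Theorem \ref{THD} are genuinely action-invariant and thus insensitive to which $t$ we use to generate the dynamics. The key point is that the conditional measures on the $i$-th coarse Lyapunov leaf have exact dimension $\gamma_i$ (this is where hyperbolicity or absolute continuity of $\mu$ is used — it guarantees the exact-dimensionality and the validity of the Ledrappier--Young framework), so that the measure of the Bowen ball factorizes, up to subexponential error, as $\prod_i (\text{thickness in } E_i)^{\gamma_i}$, giving $-\log \mu(B(\alpha,F^p_s,x,\epsilon)) \approx s\sum_i \gamma_i \max_{p(t)\le 1}\chi_i(t)$. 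Letting $s\to\infty$ and then $\epsilon\to 0$ collapses the $\liminf$ and $\limsup$ to the common value, and an application of a Shannon--McMillan--Breiman / covering argument (exactly as in the deduction of the slow entropy formula) identifies this quantity with $sh_\mu(\alpha,p)$.

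In more detail, the sequence of steps I would carry out: (1) reduce to a single generator $\alpha(t_0)$ for which all nonzero exponents over the $p$-cube are realized, and set up the Pesin blocks / Lyapunov charts adapted to $\alpha(t_0)$; (2) establish the ``transversal'' volume-lemma: on a positive-measure set, $\mu(B(\alpha, F^p_s, x,\epsilon))$ is comparable, up to $e^{\pm o(s)}$, to $\prod_{i} r_i(s,x,\epsilon)^{\gamma_i}$ where $r_i$ is the transverse radius in direction $E_i$; (3) compute $r_i(s,x,\epsilon) \asymp \epsilon\, e^{-s\max_{p(t)\le 1}\chi_i(t)}$ for $i$ with $\max_{p(t)\le 1}\chi_i(t) > 0$ and $r_i \asymp \epsilon$ otherwise, using subadditivity of the cocycle and the Lyapunov metric; (4) take logs, divide by $s$, pass to the limit in $s$ (Birkhoff/Kingman to kill the $o(s)$), then in $\epsilon$; (5) match with the slow entropy via the definition in section 2.3 and Theorem \ref{mainresult}.

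The main obstacle I expect is step (2), the transversal volume lemma: one must show that the conditional measures on the coarse Lyapunov leaves are \emph{uniformly} (in the Pesin-block sense) exact-dimensional with dimension $\gamma_i$ and that these conditionals behave multiplicatively under intersection, i.e. that there are no correlations between the different coarse Lyapunov directions beyond a subexponential factor. In the $\mathbb{Z}$-action setting this is precisely the substance of Ledrappier--Young, and the subtlety here is to run it simultaneously for the positive and negative exponents (so both stable and unstable partitions appear) and to check that the rescaling $t\mapsto st$ of the acting group does not interact badly with the non-uniformity of the Lyapunov metric — which is exactly the place where Theorem \ref{THD}'s universality of $\gamma_i$ is doing the essential work, letting us choose the most convenient generator without changing the answer. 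Once (2) is in hand, the remaining steps are the same bookkeeping already performed in the proof of Theorem \ref{mainresult}.
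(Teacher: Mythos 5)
Your steps (1)--(4) do capture the paper's overall reduction: sandwich the Bowen ball, in Lyapunov charts, between rectangles whose transverse radius in $E_i$ is $\asymp \epsilon\, e^{-s a_i}$ with $a_i=\max_{p(t)\le 1}\chi_i(t)$ (this is Lemma \ref{le1}), and then estimate the measure of such rectangles; and you correctly isolate your step (2), the ``transversal volume lemma,'' as the crux. The gap is in where you believe that factorization comes from. You attribute it to ``precisely the substance of Ledrappier--Young'' and to ``running it simultaneously for the positive and negative exponents,'' but Ledrappier--Young only controls the \emph{conditional} measures along the stable and unstable foliations separately; it does not give the quasi-product structure of the ambient measure $\mu$ in terms of those conditionals, which is what $\mu(B)\approx\prod_i r_i^{\gamma_i}$ actually asserts. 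That product structure is exactly the content of Barreira--Pesin--Schmeling \cite{pe} for \emph{hyperbolic} measures, proved by a genuinely new combinatorial counting argument over a Sinai-type partition, and it is not known in general when zero exponents are present --- which is precisely why Theorem \ref{bk} carries the hyperbolic/absolutely-continuous hypothesis.

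Concretely, the paper handles the two admissible cases by two different arguments that your sketch merges into a single appeal to ``exact dimensionality'': in the absolutely continuous case (Section 5) the factorization is a Fubini/Lebesgue-density reduction to a Euclidean volume computation (Lemma \ref{th1} plus the product structure of Lebesgue measure), with $\gamma_i=d_i$; in the hyperbolic case (Section 6) it is an adaptation of the \cite{pe} combinatorics that requires the scaling-rectangle trick of Schmeling--Troubetzkoy \cite{sc} (Proposition \ref{st}, Theorem \ref{pro5}) and, for genuine $\mathbb{R}^k$-actions, Rudolph's rectangular tiling \cite{ru} to construct the countable partition $\mathcal{P}$ on which the counting runs. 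None of this is supplied by your proposal, so as written step (2) is not a lemma but a restatement of the conclusion. A secondary issue: the opening framing ``deduce Theorem \ref{bk} as a corollary of the machinery built for Theorem \ref{mainresult}'' inverts the paper's logical order --- Theorem \ref{bk} (equivalently Proposition \ref{main}) is proved first and Theorem \ref{mainresult} is the short consequence in Section 7 --- so the closing SMB/covering step you describe is not part of the proof of Theorem \ref{bk} and would be circular if used as such.
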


In fact, most of our work goes into proving this theorem, 
and then Theorem \ref{mainresult} is an easy consequence.
 
Let us point out the main difficulties in proving Theorem \ref{mainresult}. 

Recall that, for metric entropy of diffeomorphisms, 
Brin-Katok Theorem on local entropy \cite{BK83}, 
Shannon-McMillan-Breiman(SMB) Theorem and partition theory 
(Sinai partition \cite{Sin68}) are highly used, 
see \cite{LY-85-annals-1,LY-85-annals-2}. 
However, for abelian actions, SMB Theorem is not that useful, 
because the extension of SMB Theorem for actions \cite{OW83, Lin01} 
includes faster growth of the denominator than what is needed in our case. 

Another difficulty is that, we heavily use a local entropy type theorem (Theorem \ref{bk}) 
to prove slow entropy formula, but we can not prove it in the general case, 
because we can neither generalize the proof of Brin-Katok Theorem to our case 
(which highly uses SMB Theorem) nor come out with a new proof. 
As a result, we have to put extra assumptions on the measure into our main result. 
In addition, unfortunately there is no way to construct an increasing partition 
for the action, hence we lose many powerful tools from partition theory.

In contrast to the metric entropy, another huge problem 
we can not avoid is the existence of zero Lyapunov exponents, which 
equivalently speaking, the case of non-hyperbolic measure for actions. 
Hyperbolic measure of a $C^{1+r}$ diffeomorphism locally has so called asymptotically 
almost local product structure. Namely, such kind of measure is exact dimensional, 
see \cite{BPS99} for details. 
The proof in \cite{BPS99} essentially exploits results from \cite{LY-85-annals-1,LY-85-annals-2}, 
and uses a combinatorial argument based on a special partition constructed in \cite{LY-85-annals-1,LY-85-annals-2}. 
If we just consider hyperbolic measure for abelian actions, then similar method allows us to handle the problem. However, due to the existence of zero Lyapunov exponents, it is difficult to control the behavior in the neutral directions. This is a very subtle issue in dimension theory and smooth ergodic theory.

A similar problem is to give a close enough lower bound of the lower pointwise dimension 
not only for hyperbolic measure but for arbitrary Borel probability invariant measures, 
which should be similar to Theorem F for upper pointwise dimension in \cite{LY-85-annals-2}. 
For example, in \cite{LY-85-annals-2} the following quantities (whenever they are well defined) 
are considered, which are called stable and unstable pointwise dimensions of measure $\mu$, 
\begin{align*}
d^s(x)&:=\lim_{r\to 0}\frac{\log{\mu_x^s(B^s(x,r))}}{\log{r}};\\
d^u(x)&:=\lim_{r\to 0}\frac{\log{\mu_x^u(B^u(x,r))}}{\log{r}}; 
\end{align*} 
here see \cite{LY-85-annals-2} or \cite{BPS99} for more details. 
Recently in \cite{ORH23}, 
given a $C^{1+\alpha}$ diffeomorphism $f$ preserving an invariant ergodic measure $\mu$, 
Ben Ovadia and Rodriguez Hertz proved that for $\mu$-almost every $x$,
$$
d^s(x)+d^u(x)\leq \uline{d}(x):=\liminf_{r\to 0}\frac{\log{\mu(B(x,r))}}{\log{r}}.
$$

Finally, let us emphasize here, slow entropy type invariant may have some applications to the study of Kakutani (or orbit) equivalence and rigidity problems of actions of higher rank abelian groups, which is our subsequent study in the future. 
 
In this paper, we will heavily use results and methods from \cite{LY-85-annals-1,LY-85-annals-2}. 
We also use an important technique from H. Hu's paper \cite{Hu93} to prove Theorem \ref{THD}. 
For the proof of Theorem \ref{bk}, 
we apply a combinatorial argument from \cite{LY-85-annals-1,LY-85-annals-2} 
and a sub-exponential measure density lemma for Bowen balls. 
The specific form of the density lemma is due to Ben Ovadia and Rodriguez Hertz \cite{ORH23}.

\subsection*{Outline of the paper}
This paper is organized as follows. 
In \S \ref{set:Preliminaries}, we present some definitions and settings.
The proof of Theorem \ref{THD} is given in \S \ref{set:Transversal Hausdorff Dimensions}. 
The principal and essential parts of this work are \S \ref{set:Main Reduction}, \S\ref{set:Hyperbolic Case} and 
\S\ref{set:Absolutely Continuous Case}, where we prove Theorem \ref{bk}, as well as Theorem \ref{mainresult}. 
In the last section, we address some open questions and possible characterization of slow entropy.
In the Appendix, we provide the equivalent definitions of slow entropy for smooth abelian actions.

\subsection*{Acknowledgement} 

C. Dong would like to express his great gratitude to  Anatole Katok for giving him the problem more than 10 years ago, as well as subsequent discussions. This project, especially the content in the appendix would never be finished without his involvement. He also thank Yakov Pesin, Federico Rodriguez Hertz and Omri Sarig for discussions on related topics. 
C. Dong is grateful to Kurt Vinhage and Shilpak Banerjee for helps during the preparation of the very first draft.

This research was partially supported by National Key R\&D Program of China No. 2024YFA1015100. C. Dong was also grateful for the support by Nankai Zhide Foundation, ``the Fundamental Research Funds for the Central Universities" No. 100-63243066 and 100-63253093.
Q. Qiao was supported by Nankai Zhide Foundation.

\section{Preliminaries}\label{set:Preliminaries}

\subsection{Lyapunov Exponents, Suspension, Charts}\label{subset:Lyapunov Exponents, Suspension, Charts}

Let $T_x M$ be the tangent space of $M$ at $x$, and for ${\bf t}\in \mathbb{R}^k$, $\alpha({\bf t})$ induces a map $D_x\alpha({\bf t}): T_xM\to T_{\alpha({\bf t})x}M$. One may always assume that $k\ge2$, otherwise it will reduce to the usual case (flow). For simplicity, we will use ${\bf t}$ as the diffeomorphism instead of $\alpha({\bf t})$ in some cases.

Let's first consider a $\mathbb{Z}^k$ action. According to the Multiplicative Ergodic Theorem, 
there exists a measurable set $\Gamma$ with $\mu(\Gamma)=1$, such that for all $x\in\Gamma$, 
nonzero $u\in T_xM$, such that for every ${\bf t}\in \mathbb{Z}^k$ 
the limit$$\chi(x,u, \alpha({\bf t}))=\lim_{n\to \infty}\frac{\log\|D_x\alpha(n{\bf t}) u\|}{n}$$ exists 
and we call it the Lyapunov exponent of $u$ at $x$ for $\alpha({\bf t})$. One can easily see that, 
for each $t$, the Lyapunov exponent can only take finite numbers. 
Since $\alpha$ is an abelian action, we can get a common splitting for the tangent space 
$TM=\bigoplus E_{\chi}$. And also, since $\mu$ is ergodic, $\chi$ is independent on $x$. 
Thus we will only denote $\chi_i({\bf t})$ to be the $i$-th Lyapunov exponent for $\alpha({\bf t})$. 
And the common refinement $TM=\bigoplus E_{\chi_i}$ is called the 
\textbf{Lyapunov decomposition} for $\alpha$.

For each $\chi_i$, viewed as a function of ${\bf t}$, is a linear functional from $\mathbb{Z}^k$ 
to $\mathbb{R}$. It can be linearly extended to a functional on $\mathbb{R}^k$. 
The hyperplanes $\ker\chi_i\subset \mathbb{R}^k$ are called the \textbf{Lyapunov hyperplanes} 
and the connected components of $\mathbb{R}^k\setminus\bigcup_i \ker\chi_i$ are called the 
\textbf{Weyl chambers} of $\alpha$. The elements in the union of the Lyapunov hyperplanes are 
called \textbf{singular}, and elements in the union of Weyl Chambers are called \textbf{regular}.
 For more details on the general theory, see \cite{KN11}.

Now given a $\mathbb{Z}^k$ action on $M$, let $\mathbb{Z}^k$ act on $\mathbb{R}^k\times M$ 
by $${\bf t} ({\bf s}, m)= ({\bf s}-{\bf t}, {\bf t} m)$$ and form the quotient space 
$$
S=\mathbb{R}^k \times M/\mathbb{Z}^k\cong \mathbb{T}^k \times M.
$$ 
Note that the action of $\mathbb{R}^k$ on $\mathbb{R}^k\times M$ by ${\bf s}({\bf t},m)=({\bf s}+{\bf t},m)$ 
commutes with the $\mathbb{Z}^k$ action and therefore we can get a  $\mathbb{R}^k$ action on $S$. 
This action is closely related to the original action, 
and we call it the \textbf{suspension} of $\mathbb{Z}^k$ action. 
In fact, when $k=1$, it is the usual suspension for one diffeomorphism. 
We can build a natural correspondence between invariant measures, 
nonzero Lyapunov exponents and stable/unstable distributions etc. 
between the suspension and original $\mathbb{Z}^k$ action. For example, 
if the $\mathbb{Z}^k$ action preserves $\mu$, then $\mathbb{R}^k$ action preserves 
$\lambda\times\mu$, here $\lambda$ is the Lebesgue measure on $\mathbb{T}^k$. 
And this is why we mostly only need to deal with $\mathbb{R}^k$ actions in this paper.

As usual, $d$ denotes the Riemannian metric on $M$. 
We write 
\[
\mathbb{R}^{\dim M} = \mathbb{R}^{\dim E_1} \times \cdots \times \mathbb{R}^{\dim E_r},
\]
and for $x \in \mathbb{R}^{\dim M}$, let $(x_1, \dots, x_r)$ be its coordinates with respect to this splitting. 
Define
\[
|x| = \max_i |x_i|_i,
\]
where $|\cdot|_i$ is the Euclidean norm on $\mathbb{R}^{\dim E_i}$.  

Let
\[
R^i(\rho) = \{\, x_i \in \mathbb{R}^{\dim E_i} : |x_i| \leq \rho \,\}
\]
and
\[
R(\rho) = \{\, x \in \mathbb{R}^{\dim M} : |x| \leq \rho \,\}.
\]

The following is a result directly quoted from \cite{KKH11}.

\begin{proposition}[Proposition 2.1.\cite{KKH11}]\label{chart1}
Let $\alpha$ be a locally free $C^{1+r}$, action of $\mathbb{R}^k$ on a manifold $M$ preserving an ergodic invariant measure $\mu$. There are linear functionals $\chi_i$, $i=1,\cdots, L$, on $\mathbb{R}^k$ and an $\alpha$-invariant measurable splitting called the Lyapunov decomposition, of the tangent bundle of $M$ 
$$
TM=T\mathcal{O}\oplus\bigoplus^L_{i=1}E_i
$$ 
over a set of full measure $\widetilde{\Gamma}$, where $T\mathcal{O}$ is the distribution tangent to the $\mathbb{R}^k$ orbits, such that for any ${\bf t}\in \mathbb{R}^k$ and any nonzero vector $v\in E_i$ the Lyapunov exponent of $v$ is equal to $\chi_i({\bf t})$, i.e. 
$$
\lim_{n\to \pm \infty}\frac{\log{\|D_x(\alpha(n{\bf t}))v\|}}{n}=\chi_i({\bf t}),
$$ 
where $\|\cdot\|$ is any continuous norm on $TM$. Any point $x\in \Gamma$ is called a regular point.

Furthermore, for any $\epsilon>0$ there exist positive measurable functions $C_{\epsilon}(x)$ and $K_{\epsilon}(x)$ such that for all $x\in \Gamma$, $v\in E_i(x)$, ${\bf t}\in \mathbb{R}^k$, and $i=1, \cdots, L$, 
\begin{itemize}
\item[(1)] $C_{\epsilon}^{-1}(x)e^{\chi_i({\bf t})-\frac{1}{2}\epsilon \|\alpha({\bf t})\|}\|v\|\leq \|D_x(\alpha({\bf t}))v\|\leq C_{\epsilon}(x)e^{\chi_i({\bf t})+\frac{1}{2}\epsilon \|\alpha({\bf t})\|}\|v\|$;
\item[(2)] Angles $\angle(E_i(x), T\mathcal{O})\ge K_{\epsilon}(x)$ and $\angle(E_i(x), E_j(x))\ge K_{\epsilon}(x), i\neq j$;
\item[(3)] $C_{\epsilon}(\alpha({\bf t})x)\leq C_{\epsilon}(x)e^{\epsilon\|\alpha({\bf t})\|}$ and $K_{\epsilon}(\alpha({\bf t})x)\ge K_{\epsilon}(x)e^{-\epsilon\|\alpha({\bf t})\|}$.
\end{itemize}
\end{proposition}

Finally, let's now construct Lyapunov charts for the action $\alpha$. 
The following are a generalized proposition from \cite{Hu93} with some modification of notations. 
We include here for further use, and for simplicity omit the proof because it is similar to
 Proposition 4.1. in \cite{Hu93}.

We use $m$ to denote the dimension of the manifold $M$. 
Let $\|\cdot \|$ be the standard norm on $\mathbb{R}^k$, 
and let $|\cdot |$ be the usual norm on $\mathbb{R}^m$. 
Also let $B(\rho)$ $(\rho >0)$ be the ball in $\mathbb{R}^m$ centered at the origin with radius 
$\rho$. We also assume the action is ergodic.

Denote $\{{\bf t}_1, \cdots, {\bf t}_k\}$ as the standard basis for $\mathbb{Z}^k$ 
w.r.t. the norm $\|\cdot \|$ on $\mathbb{R}^k$, i.e. 
it will span $\mathbb{Z}^k$ via coefficients in $\mathbb{Z}$. 
For ${\bf t}_1$, we denote its exponents correspondingly as $\chi_1({\bf t}_1)>\cdots>\chi_{m({\bf t}_1)}({\bf t}_1)$. 
We define 
$$
\chi_+({\bf t}_1)=\min\{\chi_i({\bf t}_1), \chi_i({\bf t}_1)>0\},\quad 
\chi_-({\bf t}_1)=\max\{\chi_i({\bf t}_1), \chi_i({\bf t}_1)<0\}
$$ 
and 
$$
\Delta({\bf t}_1)=\min\{\chi_i({\bf t}_1)-\chi_{i+1}({\bf t}_1), i=1, \cdots, m({\bf t}_1)-1\}.
$$
Define $\chi_{\pm}({\bf t}_i)$, $\Delta(x, {\bf t}_i)$ similarly. 
Let $\epsilon>0$ be such that  
$$
0<\epsilon\leq \epsilon_0:=\frac{1}{200m k}\min\{\Delta({\bf t}_i), \chi_{\pm}({\bf t}_i),\; i=1,\cdots, k\}.
$$

\begin{proposition}\label{chart2}
For the $\epsilon$ defined above, there exists a measurable function $l:\widetilde{\Gamma}\to [0,\infty)$ with $l(\alpha({\bf t})x)\leq l(x)e^{\epsilon \|{\bf t}\|}$, and a set of embeddings $\Phi_x:B(l(x)^{-1})\to M$ at each point $x\in \Gamma$ such that the following holds:
\begin{itemize}
\item[(i)] $\Phi_x(0)=x$, and the preimages $D\Phi_x(0)^{-1}(T\mathcal{O}(x))$ and $R_i(x)=D\Phi_x(0)^{-1}(E_i(x))$ of $E_i(x)$ are mutually orthogonal in $\mathbb{R}^m$, where $E_i(x)$ is the Lyapunov subspace for some exponent in Lyapunov decomposition.
\item[(ii)] Let $\tilde{{\bf t}}_x=\Phi_{{\bf t}x}^{-1}\circ t\circ \Phi_x$ be the connecting map between the chart at $x$ and the chart at ${\bf t}x$. Then $(\widetilde{{\bf t}+{\bf s}})_x=\tilde{{\bf t}}_{({\bf s}x)}\tilde{{\bf s}}_x=\tilde{{\bf s}}_{({\bf t}x)}\tilde{{\bf t}}_x$ and $(\widetilde{{\bf t}+{\bf s}})_x^{-1}=\tilde{{\bf t}}_{({\bf s}^{-1}x)}^{-1}\tilde{{\bf s}}_x^{-1}=\tilde{{\bf s}}^{-1}_{({\bf t}^{-1}x)}\tilde{{\bf t}}_x^{-1}$ for any ${\bf s},{\bf t}$.
\item[(iii)] For any $q$, $1\leq q\leq m$, a nonzero vector $u\in E_q(x)$, $v\in T\mathcal{O}(x)$, 
$$
|u|e^{\chi_q(x, {\bf t})-\epsilon \|{\bf t}\|}\leq |D_x\tilde{{\bf t}}(0)u|\leq |u|e^{\chi_q(x, {\bf t})+\epsilon \|{\bf t}\|}, i=1,\cdots, k,
$$ 
$$
|v|e^{-\epsilon \|{\bf t}\|}\leq |D_x\tilde{{\bf t}}(0)v|\leq 2|v|,
$$
 $$|u|e^{\chi_q(x, {\bf t})+\chi_q(x, {\bf s})-\epsilon (\|{\bf t}\|+\|{\bf s}\|)}\leq |D_x\widetilde{({\bf t}+{\bf s})}(0)u|\leq |u|e^{\chi_q(x, {\bf t})+\chi_q(x, {\bf s})+\epsilon(\|{\bf t}\|+\|{\bf s}\|)},\; \forall \,{\bf t},{\bf s}.$$
\item[(iv)] Let $L(\Psi)$ be the Lipschitz constant of the function $\Psi$. Then for any ${\bf t}$ with $p({\bf t})\leq 1$, 
$$
L(\tilde{{\bf t}}_x-D\tilde{{\bf t}}_x(0))\leq \epsilon , \quad L(D\tilde{{\bf t}}_x)\leq l(x).
$$
\item[(v)] There exists a number $\hat{\varepsilon}>0$ depending on $\epsilon$ and the exponents such that $\forall x\in \Gamma,$ 
$$
|\tilde{{\bf t}}_x u|\leq e^{\hat{\varepsilon}\|{\bf t}\|}|u|,\quad \forall u\in B(e^{-\hat{\varepsilon}-\epsilon}l(x)^{-1}).
$$
\item[(vi)] For all $u, v\in B(l(x)^{-1})$, we have $$K^{-1}d(\Phi_xu, \Phi_x v)\leq |u-v|\leq l(x)d(\Phi_x u, \Phi_x v),$$ for some universal constant $K$.
\end{itemize}
\end{proposition}

We'll call such local charts $\{\Phi_x: x\in \widetilde{\Gamma}\}$ the $(\epsilon, l)$-charts. 
Let $\epsilon$ in Propositions \ref{chart1} and \ref{chart2} be the same, 
and 
$$
\Gamma_{l,\epsilon}:=\left\{x\in \widetilde{\Gamma}: l(x)\leq l,\,C_{\epsilon}(x)\leq l,\, K_{\epsilon}(x)\geq \frac{1}{l}\right\}.
$$ 
When $l$ is large enough and $\epsilon$ is small enough, we have $\mu(\Gamma_{l,\epsilon})>0$. Furthermore, we obtain $\displaystyle \mu\left(\bigcup_{l>0,\epsilon>0} \Gamma_{l,\epsilon}\right)=1$.

\subsection{Transversal Hausdorff Dimension(THD)}\label{subset:Transversal Hausdorff Dimension(THD)}

Now we just consider one $C^{1+r}$ diffeomorphism $f:=\alpha({\bf t})$ on $M$ 
for some ${\bf t}\in\mathbb{R}^k$. It is a well-known fact that we can choose $t$ properly 
such that $f$ is ergodic with respect to $\mu$. The following are some definitions 
and results from section 7 in \cite{LY-85-annals-2}.

For a diffeomorphism $f$ with an ergodic measure $\mu$, let 
$$
\chi_1>\chi_2>\cdots>\chi_L
$$ 
denote its distinct Lyapunov exponents, and let 
$$
TM=E_1\oplus \cdots \oplus E_L
$$ 
be the corresponding Oseledec decomposition of its tangent space. 
Note that these exponents are all well-defined $\mu$ a.e. Let $u=\max\{i:\chi_i>0\}$, 
and for $1\le i\le u$, we define the $i$-th unstable manifold $W^i(x)$ of $f$ at $x$ by 
$$
W^i(x)=\bigg\{y\in M:\limsup_{n\to \infty} \frac{\log{d(f^{-n}x,f^{-n}y)}}{n}\le -\chi_i \bigg\},
$$ 
here $d(\cdot , \cdot )$ is a Reimannian metric on $M$. 
 Clearly, these form a nested family of foliations 
$$
W^1\subset W^2\subset \cdots \subset W^u.
$$

Each $W^i(x)$ inherits a Riemannian structure from $M$, and hence gives a metric 
on each leaf of $W^i$, which is denoted by $d^i$. 
We use $B^i(x,\epsilon)$ to denote the $d^i$-ball in $W^i(x)$ centered at $x$ of radius $\epsilon$. 

Given a measurable partition $\xi$ of $M$ 
which is subordinate to the $W^i$-foliation, there is a system of conditional measures 
induced from $\mu$ associated to each atom of $\xi$. 
In fact, these measures are defined up to a scalar multiple. 
We denote by $\{\mu_x^i\}$ the conditional measures for $\xi$.
For $x\in \Gamma$, we define the upper Hausdorff dimension and lower Hausdorff dimension of $W^i$ by 
$$
\bar\delta_i(x,\xi)=\limsup_{\epsilon\to 0}\frac{\log{\mu_x^i(B^i(x,\epsilon))}}{\log{\epsilon}}
$$ 
and 
$$
\uline\delta_i(x,\xi)=\liminf_{\epsilon\to 0}\frac{\log{\mu_x^i(B^i(x,\epsilon))}}{\log{\epsilon}}.
$$
 In \cite{LY-85-annals-2}, Ledrappier and Young proved 
 $\bar\delta_i(x,\xi)=\uline\delta_i(x,\xi)$ for $\mu$-a.e. $x\in M$, 
 and this value is independent of the partition $\xi$. 
 By ergodicity, this value is constant $\mu$-almost everywhere. 
 We denote this constant by $\delta_i$, 
 and refer to it as the dimension of $\mu$ along the foliation $W^i$.

For $\epsilon>0$, $x\in \Gamma$, $1\leq i\leq u$ and $n\in \mathbb{N}$, we define 
$$
V^i(x,n,\epsilon)=\big\{y\in W^i(x): d^i(f^kx,f^ky)<\epsilon\;\text{for}\; 0\leq k<n\big\}.
$$ 
Then we define 
$$
\bar{h}_i(x,\epsilon,\xi)=\limsup_{n\to\infty}-\frac{\log{\mu_x^i(V^i(x,n,\epsilon))}}{n} 
$$
and
$$
\uline{h}_i(x,\epsilon,\xi)=\liminf_{n\to\infty}-\frac{\log{\mu_x^i(V^i(x,n,\epsilon))}}{n}.
$$ 
According to Proposition 7.2.1 in \cite{LY-85-annals-2}, 
we conclude that for $\mu$-almost every $x\in M$, 
$$
\lim_{\epsilon\to 0}\uline{h}_i(x,\epsilon,\xi)=\lim_{\epsilon\to 0}\bar{h}_i(x,\epsilon,\xi)\;\;\mu\; a.e. \,x,
$$ 
and this limit is independent of the choice of $\xi$ or $\{\mu_x^i\}$. 
As a function, this quantity is measurable, and hence by the ergodicity of $\mu$, 
it is constant $\mu$-almost everywhere. We denote this constant by $h_i$, 
whhich is called the entropy along the $i$-th unstable manifold.

A celebrated result (Theorem $C'$ in \cite{LY-85-annals-2}) states the following: 
\begin{itemize}
\item[(i)]  $h_1=\chi_1\delta_1$, 
\item[(ii)] $h_i-h_{i-1}=\chi_i(\delta_i-\delta_{i-1})$\;\text{for}\; $2\leq i\leq u$, 
\item[(iii)]  $h_u=h_{\mu}(f)$.
\end{itemize}
Let $\gamma_1=\delta_1$ and $\gamma_i=\delta_i-\delta_{i-1}$ for $i=2,\cdots, u$. 
By replacing $f$ with $f^{-1}$, we can define the quantities $\delta_{q+1-s}, \cdots, \delta_q$, 
where $s=\#\{i: \chi_i<0\}$ denotes the number of distinct negative Lyapunov exponents of $f$. 
For these indices, we set $\gamma_q=\delta_q$ 
and $\gamma_i=\delta_i-\delta_{i+1}$ for $i= q+1-s, \cdots, q-1$. 
For indices $i$ with $\chi_i=0$, we simply define $\gamma_i=\text{dim} E_{\chi_i}$. 
From the above definitions, it follows that 
$$
\sum_i \gamma_i \chi_i=0,
$$ 
and 
$$
\sum_i\gamma_i\left\vert \chi_i\right\vert=2h_{\mu}(f).
$$

Here, $\gamma_i$ is called the {\bf transversal Hausdorff dimension (THD)} of $\mu$ 
with respect to $\chi_i$. 
These quantities depend on the diffeomorphism $f$ and the measure $\mu$. 
However, in the abelian action case, as we will prove in next section, 
they do not depend on the choice of an element of the action.

The essential fact behind the above definitions and results is that all intermediate stable 
and unstable distribution are integrable. 
Specifically, $\displaystyle \bigoplus_{1\leq i\leq j} E^i$ are integrable for $1 \leq j\leq u$. 
One should be careful extending the definitions to abelian actions, 
since the $\gamma_i$ corresponding to $\alpha({\bf t})$ may sometimes
split into two or more THDs for some other $\alpha({\bf s})$.

\subsection{Slow Entropy Type Invariants}\label{subset:Slow Entropy Type Invariants}

There are two approaches to slow entropy for $\mathbb{Z}^k$ action. 

The first one is based on an idea of coding. 
Let \(\Gamma\) be a discrete group, \(F \subset \Gamma\) its subset. We consider the spaces
\[
\Omega_{N,F} = \{\omega = (\omega_\gamma)_{\gamma \in F}; \, \omega_\gamma \in \{1, \ldots, N\}\}
\]
For every \(F \subset \Gamma\), we define the projection by \(\pi_{\Gamma,F}: \Omega_{N,\Gamma} \to \Omega_{N,F}\).
For any finite set \(F \subset \Gamma\), we define the Hamming metric \(d_F^H\) in \(\Omega_{N,F}\) by
\[
d_F^H(\omega, \overline{\omega}) := \frac{1}{\operatorname{card}F} \sum_{\gamma \in F} (1 - \delta_{\omega_\gamma \overline{\omega}_\gamma}),
\]
where $\delta_{kl}$ is a Kronecker symbol:
\begin{align*}
\delta_{kl} =
\begin{cases}
0 & \text{if } k \neq l, \\
1 & \text{if } k = l.
\end{cases}
\end{align*}

Let \(T: (X,\mu) \times \Gamma \to (X,\mu)\) be an action of the group \(\Gamma\) by measure-preserving transformations of a Lebesgue space; let \(\xi = (c_1, \ldots, c_N)\) be a finite measurable partition. We define the ``coding map'' \(\phi_{T,\xi}: X \to \Omega_{N,\Gamma}\) by \((\phi_{T,\xi})_\gamma = \omega_\gamma(x)\) where \(T(\gamma)x \in c_{\omega_\gamma(x)}\). Partial coding \(\phi_{T,\xi}^F\) for \(F \subset \Gamma\) is defined by \(\phi_{T,\xi}^F = \pi_{\Gamma,F} \circ \phi_{T,\xi}\).
The partial coding \(\phi_{T,\xi}^F\) defines the measure \((\phi_{T,\xi}^F)_* \mu\) in \(\Omega_{N,F}\).

For $\epsilon>0$ and $\delta>0$, we use $S^H_\xi(T,F,\epsilon,\delta)$ to denote 
the minimal number of balls of radius $\epsilon$ (for the metric $d^H_F$ on $\Omega_{F,N}$) whose union has $\left(\phi^F_{T,\xi}\right)_*\mu$-measure $\ge 1-\delta$. See section 1.1 in \cite{KT97} for details. 

Given a norm $p$ on $\mathbb{R}^k$, let $F^p_s$ be the set of points in $\mathbb{Z}^k$ which are also contained in the ball centered at $0$ with radius $s$. We define the slow entropy of a $\mathbb{Z}^k$ action $\alpha$ with respect to the norm $p$ and the partition $\xi$ as 
$$
sh^{H}_{\mu}(\alpha, p, \xi)=\lim_{\epsilon, \delta \to 0}\limsup_{s\to\infty}\frac{\log{S^H_{\xi} (\alpha, F^p_s, \epsilon, \delta)}}{s}.
$$ 
Then we define 
$$
sh^{H}_{\mu}(\alpha, p)=\sup_{\xi} sh_{\mu}(\alpha,p,\xi), 
$$
which we call \textbf{Hamming entropy} for short.

 The other approach is to start with a metric $d$ on $M$, 
 and define $d_F=\max_{{\bf t}\in F} d\circ \alpha({\bf t})$. 
 Denote $S_d(\alpha, F, \epsilon,\delta)$ as the minimal number of $\epsilon$-$d_F$ 
 balls whose union has measure $\ge 1-\delta$. 
 For the same $F^p_s$, we define the \textbf{Bowen entropy} by 
 $$
 sh_{\mu}(\alpha, p)=\lim_{\epsilon, \delta \to 0}\limsup_{s\to\infty}\frac{\log{S_d (\alpha, F^p_s, \epsilon, \delta)}}{s}.
 $$

In the appendix, we prove that 
these two definitions coincide for smooth abelian actions, 
and hence for the latter it does not depend on the choice of $d$. 
Finally, slow entropy for $\alpha$ is defined as 
$$
sh_{\mu}(\alpha)=\inf_{p:vol(p)=1} sh_{\mu}(\alpha, p),
$$ 
here $vol(p)$ is the volume of the unit ball in the norm $p$. 

For any $x\in M$ and $\epsilon>0$.  
given $s>1$, there exists $n\in\mathbb{N}$ such that 
$$
B(\alpha, F^p_{n+1},x,\epsilon)\subset B(\alpha, F^p_s,x,\epsilon)\subset B(\alpha, F^p_n,x,\epsilon). 
$$
This implies that in the definition of the Bowen entropy $sh_\mu(\alpha,p)$, 
we can replace the Bowen ball $B(\alpha, F^p_s,x,\epsilon)$ 
with $B(\alpha, F^p_n,x,\epsilon)$ and replace the limit $s\to\infty$ (where $s\in \mathbb{R}$) with the limit $n\to\infty$ (where $n\in \mathbb{N}$). 

In this paper, we will mostly focus on the quantity $sh_{\mu}(\alpha, p)$ 
instead of $sh_{\mu}(\alpha)$. 
For $\mathbb{R}^k$ action, 
we use the definition corresponding to $sh_\mu(\alpha)$ to refer to slow entropy. 

In the case of a non-ergodic invariant measure, 
we follow the standard procedure convention: decompose the measure into its ergodic components, 
then integrate the slow entropy over all ergodic components. 
For arbitrary actions, this convention cannot be applied. However, it is valid in the smooth case.
Let us emphasize the role of the norm $p$: 
it can be interpreted as a time change, 
namely, changing the norm $p$ means a time change of the abelian action.

For more details and discussions about slow entropy, 
we refer the reader to Section 1 in \cite{KT97} and Section 3 in \cite{KKH14}.

A natural question arising here is whether the Hamming entropy coincides with the Bowen entropy. 
In the rank one case, this is true in general, i.e. 
or homeomorphisms and continuous flows on compact metric spaces, see \cite[Theorem 1.1]{Ka80}. 
In the higher rank situation, it remains open in the general case. While for smooth abelian actions, we can answer it in the affirmative. This conclusion is established in Theorem \ref{m} provided in the appendix. 

\section{Transversal Hausdorff Dimensions}\label{set:Transversal Hausdorff Dimensions}

In this section, we consider an $\mathbb{R}^k$ action $\alpha$ on $M$ by $C^{1+r}$ diffeomorphisms. 
Our goal is to prove Theorem \ref{THD}. 
It is worth mentioning that a similar but stronger result was also obtained  in \cite{BRW23}. 

Firstly, we have the Lyapunov decomposition of the tangent bundle, 
$$
TM=T\mathcal{O}\oplus\bigoplus_{i=1}^{L} E_{i}, 
$$
where $E_{i}$ is the Lyapunov subspace with respect to $\chi_i$. 
For each $t\in\mathbb{R}^k$, there is an order for the positive exponents 
$\chi_i({\bf t})$, $1\leq i\leq u({\bf t})$, and the corresponding subspaces $E_i$, 
such that for every $1\leq j\leq u({\bf t})$, the distribution 
$\displaystyle \bigoplus_{1\leq i\leq j}E_i$ is integrable. 
Similarly for the negative exponents. 
Below in this section, we will ignore $k$ zero exponents from the direction of flow.

Next, we prove the following slightly generalized proposition of \cite[Proposition 8.1.]{Hu93}.

\begin{proposition}\label{p1}
Let $f$ and $g$ be commuting $C^{1+r}$ diffeomorphisms on $M$
that preserve an ergodic measure $\mu$.  
Let 
$$
\chi_1(f)>\chi_2(f)>\cdots>\chi_{u(f)}(f)>0>\cdots>\chi_{m(f)}(f)
$$
denote all distinct Lyapunov exponents of $f$, possibly there are extra zero exponents. 
For $g$, we similarly have 
$$
\chi_1(g)>\chi_2(g)>\cdots>\chi_{u(g)}(g)>0>\cdots>\chi_{m(g)}(g).
$$ 
Suppose there exists $i$ with $1\leq i\leq \min\{u(f),u(g)\}$ such that 
$\displaystyle \bigoplus_{1\leq j\leq i}E_{\chi_j(f)}=\bigoplus_{1\leq j\leq i}E_{\chi_j(g)}$, 
Assume further there exist $ \lambda(f), \lambda(g)>0$ satisfying 
$$
\chi_{i+1}(f)< \lambda(f)< \chi_i(f),\quad  \chi_{i+1}(g)< \lambda(g)< \chi_i(g). 
$$
Then there exists a measurable partition $\xi$ on $M$ with the following properties: 
\begin{itemize}
\item[(1)] $\xi$ is subordinate to $W^i$, where $W^i$ is integrated by $\displaystyle \bigoplus_{1\leq j\leq i}E_{\chi_j(f)}=\bigoplus_{1\leq j\leq i}E_{\chi_j(g)}$;
\item[(2)] $\xi$ is an increasing partition for both $f$ and $g$;
\item[(3)] Both $\displaystyle\bigvee^{\infty}_{n=0}f^{-n}\xi$ and $\displaystyle\bigvee^{\infty}_{n=0}g^{-n}\xi$ are the partition into points (mod $\mu$);
\item[(4)] The biggest $\sigma$-algebra contained in $\displaystyle \bigcap^{\infty}_{n=0}\bigcap^{\infty}_{m=0}f^ng^m\xi$ is $\mathcal{B}^i$.
\end{itemize}
\end{proposition}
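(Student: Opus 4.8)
The strategy is to adapt the construction of an increasing (Sinai-type) partition subordinate to an unstable foliation, as carried out in Section~4 of \cite{ly} and Section~8 of \cite{hu}, but to perform it \emph{simultaneously} for the two commuting diffeomorphisms $f$ and $g$. First I would fix a Lyapunov chart (Proposition~\ref{p2}) and work on a set $\Gamma_l$ of large measure, on which both $f$ and $g$ have uniformly controlled hyperbolicity constants and the common distribution $\bigoplus_{1\le j\le i}E_{\chi_j(f)}=\bigoplus_{1\le j\le i}E_{\chi_j(g)}$ integrates to a foliation $W^i$ with uniformly sized local leaves $W^i_{\mathrm{loc}}$. The starting cell of the partition will be built from a small ``rectangle'' $R$ — a subset of $\Gamma_l$ of positive measure that is a near-product of a piece of transversal and a family of local $W^i$-leaves — whose intersections with $W^i$-leaves have nice boundaries (the standard argument: for a.e.\ choice of the radius of the transversal ball, the boundary has zero measure). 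Then I would set $\xi$ to be the partition whose nontrivial atoms are the connected components $W^i_{\mathrm{loc}}(x)\cap R$ for $x\in R$, and whose single remaining atom is $M\setminus R$. That $\xi$ is subordinate to $W^i$ (item~1) is immediate from the construction, since the common distribution is $f$- and $g$-invariant and hence $W^i$ is a common foliation.

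For item~2, the key point is the monotonicity property $\chi_{i+1}(f)<\lambda(f)<\chi_i(f)$ (and the analogue for $g$): because $f$ expands the $W^i$-direction strictly faster than it expands any complementary direction inside $W^{i+1}$ — more precisely, after passing to the $(\epsilon,l)$-charts with $\epsilon$ small relative to the spectral gap $\lambda(f)-\chi_{i+1}(f)$ — the image $fR$ of a rectangle stretches across whole local $W^i$-leaves in the $f$-expanding sense while staying ``thin'' transversally, so each atom of $\xi$ is contained in a single atom of $f\xi$; thus $f^{-1}\xi$ refines $\xi$. Exactly the same argument applied with $g$ and $\lambda(g)$ gives that $g^{-1}\xi$ refines $\xi$. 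This is where the hypothesis that $f$ and $g$ \emph{commute} and share the intermediate distribution is essential: it guarantees there is a single foliation $W^i$ that is simultaneously unstable-like for both maps, so one partition can be increasing for both. For item~3, one argues that $\bigvee_{n\ge 0}f^{-n}\xi$ separates points: given two distinct points $y,z$, either they eventually leave the same $W^i$-leaf under forward iteration of $f$ (handled by the contraction on the transversal / the fact that $\xi$ has uniformly small diameter atoms after the expansion is unwound), or they lie on the same $W^i$-leaf, in which case the $f$-expansion along $W^i$ (rate $\ge\chi_i(f)>0$) eventually pushes them into different components of $f^{-n}R\cap W^i$; the same holds for $g$. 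This is the standard ``generating'' argument from \cite{ly}, and the only modification is bookkeeping the two maps separately.

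The main obstacle — and where I expect most of the work — is item~4: identifying the largest $\sigma$-algebra inside $\bigcap_{n\ge 0}\bigcap_{m\ge 0}f^n g^m\xi$ as exactly $\mathcal{B}^i$, the $\sigma$-algebra of $W^i$-saturated sets (i.e.\ the measurable hull of the $W^i$-foliation). One inclusion is easy: since $\xi$ is subordinate to $W^i$ and $f,g$ preserve $W^i$, every $f^ng^m\xi$ is coarser than the point partition along transversals but saturated along $W^i$ in the limit, so $\mathcal{B}^i$ is contained in the intersection. The reverse inclusion is the delicate part: one must show that the doubly-indexed intersection does not retain any transversal information, i.e.\ that for a.e.\ pair of points on \emph{different} $W^i$-leaves there exist $n,m\ge 0$ with $f^ng^mx$ and $f^ng^my$ in different atoms of $\xi$. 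For a single map this is Proposition~8.1 of \cite{hu}; here the subtlety is that we only get to move by the \emph{sub-semigroup} $\{f^ng^m: n,m\ge 0\}$ rather than the full group, so I would need to check that the contraction of the transversal complement $\bigoplus_{j>i}E_{\chi_j}$ still occurs along this sub-semigroup. This follows provided the two maps can be combined so that $f^ng^m$ eventually contracts every transversal Lyapunov direction — which it does, because on $E_{\chi_j}$ with $j>i$ the exponent of $f^ng^m$ is $n\chi_j(f)+m\chi_j(g)$, and by the gap hypotheses these are bounded above by $n\chi_{i+1}(f)+m\chi_{i+1}(g)$, strictly below the $W^i$-expansion rate $n\chi_i(f)+m\chi_i(g)$; taking $n,m\to\infty$ along a suitable ray makes the transversal distance shrink. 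Packaging this uniformly over $\Gamma_l$, and then removing the auxiliary set $\Gamma_l$ by a standard exhaustion/Rokhlin-tower argument, completes the proof. The computations are routine given the chart estimates in Proposition~\ref{p2}; the conceptual content is simply that commutativity plus a shared intermediate distribution lets the one-map construction of \cite{hu} go through for the pair.
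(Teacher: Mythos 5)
Your overall plan coincides with the paper's: both adapt Hu's Proposition~8.1 in \cite{hu} by building a Sinai-type partition from rectangles in Lyapunov charts, with the spectral-gap hypothesis $\chi_{i+1}<\lambda<\chi_i$ supplying the rates that replace the unstable rate in Hu's estimates. The paper in fact records only the two modified contraction/boundary estimates (Hu's Lemmas~8.3 and~8.4 with $\lambda(f),\lambda(g)$ in place of the unstable exponents, and $d^i,W^i$ in place of $d_\omega,W^u_\alpha$) and refers to Hu for the rest; your sketch of items~1)--3) fits that template.

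Your treatment of item~4), however, has the implication reversed and invokes the wrong mechanism. The nontrivial inclusion is $\bigcap_{n,m\ge0}\sigma(f^ng^m\xi)\subseteq\mathcal{B}^i$. Because $\xi$ is subordinate to $W^i$ and both $f,g$ preserve $W^i$, points on \emph{different} $W^i$-leaves are automatically separated by every $f^ng^m\xi$ for all $n,m\ge0$; there is nothing to prove there. What one must show is that two points $y,z$ on the \emph{same} $W^i$-leaf eventually lie in the same atom of $f^ng^m\xi$, equivalently that $f^{-n}g^{-m}y$ and $f^{-n}g^{-m}z$ fall into a single $\xi$-atom for suitable $n,m\ge0$. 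This is governed by backward contraction \emph{along} $W^i$ at rate at least $\lambda(f),\lambda(g)$ (minus $\epsilon$) --- precisely the paper's version of Hu's Lemma~8.4, $d^i(f^{-n}g^{-k}y,f^{-n}g^{-k}z)\le 2Kl(z)d^i(y,z)e^{-n(\lambda(f)-2\varepsilon)-k(\lambda(g)-2\varepsilon)}$ --- together with a Borel--Cantelli control (the Lemma~8.3 analogue) guaranteeing that the backward orbit does not approach the $W^i$-boundary of the initial rectangle faster than this rate. Your appeal to ``contraction of the transversal complement $\bigoplus_{j>i}E_{\chi_j}$'' under forward iteration of $f^ng^m$ is both irrelevant and, in general, false: for $i<u(f)$ the exponent $\chi_{i+1}(f)$ is strictly positive (the hypothesis only forces $\chi_{i+1}(f)<\lambda(f)<\chi_i(f)$), so $n\chi_{i+1}(f)+m\chi_{i+1}(g)\to+\infty$ and $E_{\chi_{i+1}}$ is \emph{expanded}, not contracted, along the sub-semigroup. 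The transversal behavior simply does not enter item~4); the lower bound $\lambda>\chi_{i+1}$ is there to isolate the $W^i$-scale from the slower $W^{i+1}$-scale in the boundary estimate, not to produce transversal contraction.
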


Here, we say $\xi$ is subordinate to $W^i$-foliation if for $\mu$-a.e. 
$x\in M$, $\xi(x)\subset W^i(x)$ and $\xi(x)$ contains a neighborhood of $x$ that is open 
in the submanifold topology of $W^i(x)$. 
A partition $\xi_1$ is said to refine $\xi_2$ 
(denoted by $\xi_1>\xi_2$) if for $\mu$-a.e. $x\in M$, 
we have $\xi_1(x)\subset\xi_2(x)$. 
A partition $\xi$ is called increasing if $\xi>f\xi$. 
Let $\mathcal{B}^i$ denote the 
sub-$\sigma$-algebra of the Borel $\sigma$-algebra on $M$ whose elements 
are unions of entire $W^i$-leaves. 
For further information, we refer the reader to \cite{LY-85-annals-2}.

\begin{proof}
The proof is similar to that of Proposition 8.1 in \cite{Hu93} with several modifications. 
First, for Lemma 8.3 in \cite{Hu93}, we consider the inequality 
$$
d^i(f^{-n}g^{-k}y, \partial B(x, \rho))e^{n(\lambda(f)-2\varepsilon)+k(\lambda(g)-2\varepsilon)}<b^{-1}.
$$ 
Second, for Lemma 8.4 in \cite{Hu93}, we prove the inequality 
$$
d^i(f^{-n}g^{-k}y, f^{-n}g^{-k}z)\leq 2Kl(z)d^i(y, z)e^{-n(\lambda(f)-2\varepsilon)-k(\lambda(g)-2\varepsilon)}.
$$
For the proof there, replacing $W^u_{\alpha}$ with $W^i_\alpha$ and $d_\omega$ with $d^i$ 
makes the same argument applicable to our case. 
We omit the detailed proof here for simplicity.
\end{proof}

In fact, the above proposition can be applied to the splitting that appears in the Lyapunov decomposition for $\alpha$, 
since the splitting in the proposition (or for two diffeomorphisms) 
is coarser than this one. And this is what we really need!

Given the partition $\xi$ defined above, we have the following proposition. 

\begin{proposition}\label{p3}
$H_\mu(\xi|fg\xi)=H_\mu(\xi|f\xi)+H_\mu(\xi|g\xi).$
\end{proposition}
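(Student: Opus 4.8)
The plan is to obtain the identity purely from abstract properties of conditional entropy of partitions, invoking only that $\xi$ is increasing for both $f$ and $g$ (property 2 of Proposition \ref{p1}), that $f$ and $g$ commute, and that both preserve $\mu$; properties 1), 3), 4) of Proposition \ref{p1} are not needed here.

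First I would record the refinement relations that the hypotheses force. Since $\xi>f\xi$ and $\xi>g\xi$, and since $f$ and $g$ are invertible $\mu$-preserving maps, hence preserve the refinement order on partitions, applying $f$ and $g$ and using $fg=gf$ yields the chains $\xi>f\xi>fg\xi$ and $\xi>g\xi>fg\xi$. In particular $\xi\vee f\xi=\xi$ and $f\xi\vee fg\xi=f\xi$, and $f\xi,g\xi$ both sit between $fg\xi$ and $\xi$.

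Next I would apply the chain rule for conditional entropy of partitions, $H_\mu(\alpha\vee\beta\mid\gamma)=H_\mu(\alpha\mid\gamma)+H_\mu(\beta\mid\gamma\vee\alpha)$, with $\alpha=f\xi$, $\beta=\xi$, $\gamma=fg\xi$, and simplify using the relations above:
\[
H_\mu(\xi\mid fg\xi)=H_\mu(\xi\vee f\xi\mid fg\xi)=H_\mu(f\xi\mid fg\xi)+H_\mu(\xi\mid f\xi\vee fg\xi)=H_\mu(f\xi\mid fg\xi)+H_\mu(\xi\mid f\xi).
\]
Then, since $\mu$ is $f$-invariant and $f$ is invertible, conditional entropy is unchanged under the push-forward by $f$, so $H_\mu(f\xi\mid fg\xi)=H_\mu\big(f\xi\mid f(g\xi)\big)=H_\mu(\xi\mid g\xi)$, and substituting gives exactly $H_\mu(\xi\mid fg\xi)=H_\mu(\xi\mid f\xi)+H_\mu(\xi\mid g\xi)$.

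The only delicate point — which I would flag as the main (though mild) obstacle — is the well-definedness and finiteness of these conditional entropies, since $\xi$ is an uncountable partition (it is subordinate to the $W^i$-foliation). Here one uses that $\xi$ is increasing for $f$: by the construction in \cite{ly} and \cite{hu}, within each atom of $f\xi$ the partition $\xi$ induces a \emph{countable} partition, so $I_\mu(\xi\mid f\xi)(x)=-\log\mu_x^{f\xi}(\xi(x))$ makes sense and $H_\mu(\xi\mid f\xi)$ is a genuine finite number, and likewise for $g$; monotonicity of conditional entropy under refinement of the conditioning partition, together with $fg\xi<f\xi$ and $fg\xi<g\xi$, then forces $H_\mu(\xi\mid fg\xi)<\infty$ as well, so the chain rule applies verbatim. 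If one prefers to avoid the finiteness discussion, the displayed identities are valid in $[0,\infty]$ regardless, so the conclusion holds in either case.
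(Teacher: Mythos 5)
Your proof is correct and is essentially the paper's own argument with the roles of $f$ and $g$ interchanged: the paper writes $H_\mu(\xi|fg\xi)=H_\mu(\xi\vee g\xi|fg\xi)=H_\mu(g\xi|fg\xi)+H_\mu(\xi|g\xi\vee fg\xi)$ and then identifies $H_\mu(g\xi|fg\xi)=H_\mu(\xi|f\xi)$ by $g$-invariance, whereas you split off $f\xi$ instead of $g\xi$. Your extra remarks on well-definedness and finiteness of the conditional entropies are a sensible addition the paper leaves implicit.
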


\begin{proof}
We compute the conditional entropy as follows:
  \begin{align*}
  H_\mu(\xi|fg\xi)&=H_\mu(\xi\vee g\xi|fg\xi)=H_\mu(g\xi|fg\xi)+H_\mu(\xi|g\xi\vee fg\xi)\\
  &=H_\mu(\xi|f\xi)+H_\mu(\xi|g\xi).
\end{align*}
\end{proof}

Note also that we have $H_\mu(\xi|f\xi)=h_i(f)$ for any such partition 
(a result of section 9 of \cite{LY-85-annals-2}). Combining $h_i(f)=\sum_{j=1}^i \gamma_j(f) \chi_j(f),$ we have $H_\mu(\xi|f\xi)=\sum_{j=1}^i \gamma_j(f) \chi_j(f)$. This also applies to $g$ and $fg$, then Proposition \ref{p3} implies, 
\begin{equation}\label{e1}
\sum_{j=1}^i \gamma_j(fg) \chi_j(fg)=\sum_{j=1}^i \gamma_j(f) \chi_j(f)+\sum_{j=1}^i \gamma_j(g) \chi_j(g).
\end{equation}

Now, we are ready to prove Theorem \ref{THD}. 

\begin{proof}[Proof of Theorem \ref{THD}]
We split the proof of Theorem \ref{THD} into the following four parts. Below $n\in\mathbb{N}^+$, $r\in\mathbb{R}^+$, and ${\bf t}\in\mathbb{R}^k$ with ${\bf t}\neq 0$.

$(1)$ $\gamma_i(r{\bf t})=\gamma_i({\bf t})$. 

Considering the measurable partition $\xi$ (depending on $i$) 
built in Lemma 9.1.1 in \cite{LY-85-annals-2} for $\alpha({\bf t})$, 
then it is also a partition for $\alpha(n{\bf t})$ satisfying the same conditions. 
Combining $H_\mu(\xi|\alpha(n{\bf t})\xi)=nH_\mu(\xi|\alpha({\bf t})\xi)$ and $H_\mu(\xi|\alpha({\bf t})\xi)=h_i(\alpha({\bf t}))$, 
for $i=1, \cdots, u(t)$, then we get that $\gamma_i(n{\bf t})=\gamma_i({\bf t})$ for $\chi_i({\bf t})>0$. 
This also gives us that for any positive rational numbers $u$, 
$\gamma_i(u{\bf t})=\gamma_i({\bf t})$ for $\chi_i({\bf t})>0$. Now pick arbitrary ${\bf s},{\bf t}\in\mathbb{R}^k$ with ${\bf s}=r{\bf t}$ for some $r$, 
then 
$$ 
\gamma_1({\bf s}+{\bf t}) \chi_1({\bf s}+{\bf t})= \gamma_1({\bf s}) \chi_1({\bf s})+\gamma_1({\bf t}) \chi_1({\bf t}). 
$$ 
Hence we have  
$$
\gamma_1({\bf t})-\gamma_1({\bf s}+{\bf t})=(\gamma_1({\bf s}+{\bf t})-\gamma_1({\bf s}))r.
$$ 
If at least one of $(\gamma_1({\bf t})-\gamma_1({\bf s}+{\bf t}))$ and $(\gamma_1({\bf s}+{\bf t})-\gamma_1({\bf s}))$ is not $0$, 
then we can replace $r$ by $nr$ (arbitrary $n>0$), then we will get a contradiction, 
because all $\gamma_1$ are bounded by $\dim{E_1}$. Hence 
$$
\gamma_1({\bf t})=\gamma_1({\bf s}+{\bf t})=\gamma_1({\bf s}),
$$ 
and this completes the first step. The same argument works for the subsequent $\gamma_i$s.

$(2)$ We consider in one Weyl Chamber $\mathcal{C}$. 

Assume there are $u$ positive exponents. Dividing $\mathcal{C}$ by hyperplanes 
$$
L_{i,j}:=\{{\bf t}: \chi_i({\bf t})-\chi_j({\bf t})=0\}
$$ 
into some small sub-chambers. 
In each sub-chamber, the positive exponents have a fixed order 
(this order does not change as ${\bf t}$ varies). Thus, we can apply Proposition \ref{p3} 
and use induction on $i$. From Equation (\ref{e1}), when $i=1$, 
for every ${\bf s}, {\bf t}$ in that sub-chamber, we have 
$$
\gamma_1({\bf t}+{\bf s})\chi_1({\bf t}+{\bf s})=\gamma_1({\bf t})\chi_1({\bf t})+\gamma_1({\bf s})\chi_1({\bf s}),
$$ 
hence 
$$ 
(\gamma_1({\bf t})-\gamma_1({\bf s}+{\bf t}))\chi_1({\bf t})=(\gamma_1({\bf s}+{\bf t})-\gamma_1({\bf s}))\chi_1({\bf s}).
$$ 
If at least one of $(\gamma_1({\bf t})-\gamma_1({\bf s}+{\bf t}))$ and $(\gamma_1({\bf s}+{\bf t})-\gamma_1({\bf s}))$ is not $0$, 
then we can let ${\bf s}$ or ${\bf t}$ go to $\infty$, 
then we will get a contradiction due to the same reason in part (1). 
Hence $$\gamma_1({\bf t})=\gamma_1({\bf s}+{\bf t})=\gamma_1({\bf s}),$$ and this finished the first step. 
Suppose for $i<u$, we have $\gamma_j({\bf t}), j\leq i$ are all constant for all $t$ in the sub-chamber. 
Then consider Equation (\ref{e1}) for $i+1$, since the first $i$ THDs are equal, 
this will leave us 
$$
\gamma_{i+1}({\bf t}+{\bf s})\chi_{i+1}({\bf t}+{\bf s})=\gamma_{i+1}({\bf t})\chi_{i+1}({\bf t})+\gamma_{i+1}({\bf s})\chi_{i+1}({\bf s}).
$$ 
Use the argument in the first step, we get the desired result. 
Hence for all positive exponents, 
we have THDs are constant. The same holds for the negative exponents if 
we just consider the negative of the sub-chamber. 

The argument also applies to 
 points on a hyperplane that does not 
intersect any other hyperplanes or Lyapunov hyperplane.

$(3)$ We consider one Weyl Chamber $\mathcal{C}$. 

We consider two adjacent sub-chambers, 
$\mathcal{C}_1$ and $\mathcal{C}_2$ (where adjacent means separated only by one hyperplane). 
Note that multiple hyperplanes may coincide;  
if not, we can skip the following and go to next paragraph. 
Let's assume $L_{i,j}$ and $L_{p,q}$ are two of them, where $i,j,p,q$ are four different numbers, 
one can easily get that, on the hyperplane 
$$
\chi_i=\chi_j>0,\;\chi_p=\chi_q>0,\; \chi_i\neq\chi_p.
$$ 
Thus, the paired exponents will take different values on the hyperplane, 
and hence we can always consider them one by another, 
ordered from the paired exponents that take the greatest value to the paired exponents 
take the least value. Of course, we also need to take other positive exponents into account, 
which can be tackled by argument from part $(2)$. 

Without loss of generality, we suppose the first hyperplane is 
$$L_{i,j}=\{{\bf t}: \chi_i({\bf t})-\chi_j({\bf t})=0\}.$$ 
Consider ${\bf s}\in\mathcal{C}_1$ and ${\bf t}\in\mathcal{C}_2$, 
they are very close to $L_{i,j}$ and comparably far away from other hyperplanes or 
Lyapunov hyperplanes. For such ${\bf s}, {\bf t}$, $\chi_i$ and $\chi_j$ are two closed exponents 
in both sub-chambers, and only these two exponents will change order. 
On hyperplane $L_{i,j}$, they coincide. Suppose $\chi_i, \chi_j$ locate at $k, k+1$ in the order, 
then for the first $k-1$ exponents, the THDs are constant by argument in part (2). 
By apply Proposition \ref{p3} for $k+1$, we cancel the first $k-1$ exponents, then get 
\begin{align}
\gamma_i({\bf s}+{\bf t})\chi_i({\bf s}&+{\bf t})+\gamma_j({\bf s}+{\bf t})\chi_j({\bf s}+{\bf t})\nonumber\\
&=\gamma_i({\bf s})\chi_i({\bf s})+\gamma_j({\bf s})\chi_j({\bf s})+\gamma_i({\bf t})\chi_i({\bf t})+\gamma_j({\bf t})\chi_j({\bf t}).\label{e2}
\end{align}
When one of ${\bf s}, {\bf t}$ lies in $L_{i,j}$, Equation (\ref{e2}) still holds. Suppose ${\bf t}\in L_{i,j}$, this will give us $$\gamma_i({\bf s})+\gamma_j({\bf s})=\gamma_i({\bf t})+\gamma_j({\bf t})$$ for ${\bf s}$ in either $\mathcal{C}_1$ or $\mathcal{C}_2$. Hence from this, when ${\bf s},{\bf t}$ in different sub-chambers, $$\gamma_i({\bf s})+\gamma_j({\bf s})=\gamma_i({\bf t})+\gamma_j({\bf t}).$$

Now, suppose ${\bf s}\in \mathcal{C}_1,\,{\bf t}\in \mathcal{C}_2$ and ${\bf s}+{\bf t}\in \mathcal{C}_1$, Equation (\ref{e2}) is $$(\gamma_i({\bf s})-\gamma_i({\bf t}))\chi_i({\bf t})=(\gamma_j({\bf t})-\gamma_j({\bf s}))\chi_j({\bf t}).$$
Since $\chi_i({\bf t})\neq \chi_j({\bf t})$, we conclude that 
$$
\gamma_i({\bf s})=\gamma_i({\bf t}),\quad \gamma_{j}({\bf s})=\gamma_{j}({\bf t}).
$$

For other positive exponents, arguments in part (2) and the above work similarly. Hence the constantness of THDs can be proved when crossing the hyperplanes. All the above arguments can be also applied to one hyperplane when crossing some other hyperplane. And these show that the THDs are all constant in one Weyl Chamber.

$(4)$ We consider the case when crossing the Lyapunov hyperplane. 

There may be several exponents changing their sign. 
However, we do not need to consider these exponents, 
instead we only consider those exponents remain to be positive. 
The argument in (3) works in this case. We omit the details here.

Hence we complete the proof of Theorem \ref{THD}. 
\end{proof}

For future use, 
we denote $\gamma_i({\bf t})$ by $\gamma_i$.

\begin{remark}
One may easily figure out that, for maximal rank actions, say Cartan actions on tori, 
$E_{\chi_i}$ is integrable to some $\mathcal{W}^i$ for each $i$, 
and the corresponding THD $\gamma_i$ is, 
in fact the pointwise dimension of the conditional measure of $\mu$ restricted to $\mathcal{W}^i$. 
In this case, we would rather call $\gamma_i$ conditional dimension instead of 
transversal dimension! However, in the more general cases, especially 
when there are positive proportional exponents, some $\gamma_i$ really represents 
the dimension of the transversal direction rather than conditional dimension.
\end{remark}

\section{Main Reduction}\label{set:Main Reduction}

The following three sections are dedicated to the proof of Theorem \ref{bk}. 
In this section, we restate the theorem as Theorem \ref{bk} and give a reduction 
from abelian action to one diffeomorphism case. 
The complete proof of Theorem \ref{bk} is divided between these two subsequent sections: 
the next section deals with the hyperbolic case, 
while the section after that addresses the absolutely continuous case.

As established in \S \ref{subset:Lyapunov Exponents, Suspension, Charts}, 
the suspension of a $\mathbb{Z}^k$ action can naturally identified as an $\mathbb{R}^k$ action. 
In this section, we focus on an action $\alpha: \mathbb{R}^d \times (M, \mu) \to (M, \mu)$ 
that preserves an ergodic measure $\mu$, 
and denote its corresponding Lyapunov exponent functionals by 
$\mathcal{L} = \{\lambda_i: 1 \leq i \leq L\}$. 



Ben Ovadia and Rodriguez Hertz established the Besicovitch-Bowen covering lemma 
and the Bowen-Lebesgue density lemma in \cite{ORH23}. 
A similar version of the Bowen-Lebesgue density lemma can be found in \cite[Lemma 2.2]{DQ25}. 
Adapting the strategy of \cite{ORH23, DQ25}, we obtain the following lemmas. 
Recall that $m$ denotes the dimension of the manifold $M$. 

\begin{lemma}\label{ORH23-Lemma 2.2}
  Fix a small $\epsilon$ and a large $l$, 
  and let $\Phi_{x_0}$ denote the Lyapunov chart at $x_0\in \Gamma_{l,\epsilon}$. 
 Let $ A \subset \Phi_{x_0} \cap \Gamma_{l,\epsilon} $ be a measurable subset. 
 Then $ A $ can be covered by Bowen balls $B(\alpha, F^p_n,\cdot,\epsilon)$ centered at points of $ A $ 
 with the covering multiplicity bounded by $ e^{3 nm\epsilon} $, 
 where $ n $ is sufficiently large with respect to $ \Gamma_{l,\epsilon} $. 
 \end{lemma}

 \begin{lemma}\label{ORH23-Lemma 2.3}
Fix $\epsilon>0$, let $\mu$ be an ergodic invariant Borel probability measure, and let $ A $ be a measurable set with $\mu(A)>0 $. 
Then for $ \mu $-a.e. $ x \in A $,
  \[
    \lim_{n \rightarrow \infty} -\frac{1}{n} \log \frac{\mu\left(B(\alpha, F^p_n, x, \epsilon)\cap A\right)}{\mu\left(B(\alpha, F^p_n, x, \epsilon)\right)}=0.
  \]
\end{lemma}

Additionally, if the set $A$ in Lemma \ref{ORH23-Lemma 2.2} is assumed to be contained in an element of a measurable partition, 
then by modifying the proofs, we conclude that Lemmas \ref{ORH23-Lemma 2.2} and 
Lemma \ref{ORH23-Lemma 2.3} also holds for conditional measures. 

The proofs of these lemmas are almost identical to those in \cite{ORH23}, 
with the only modification that the $r$-neutralized Bowen ball $B(x,n,e^{-nr})$ is replaced by 
the Bowen ball $B(\alpha, F^p_n,x,\epsilon)$. Hence we omit the proofs. 

Next, we establish the proof of Theorem \ref{mainresult} by using Theorem \ref{bk}.

\begin{proof}[Proof of  Theorem \ref{mainresult}]
We define $\displaystyle \Delta:=\sum_{i=1}^{L}\gamma_i \max_{{\bf t}: p({\bf t})\leq 1} \chi_i({\bf t})$. 
By Theorem \ref{bk}, there exists a full measure set $M'$ 
such that the following holds: for any $\theta>0$, 
every $ x\in M'$ admits $\epsilon(x)>0, n(x)>0$ with the property that for all 
 $\epsilon<\epsilon(x)$ and $n>n(x)$, 
$$
e^{-n(\Delta+\theta)}\leq \mu(B(\alpha, F^p_n, x,\epsilon))\leq e^{-n(\Delta-\theta)}.
$$ 
For each positive integer $n$, we define 
$$ 
M_{n}':=\left\{x\in M': \epsilon(x)\ge \frac{1}{n},\, n(x)\leq n\right\}.
$$ 
Then $\displaystyle M'=\bigcup_n M_n'$. For any $\delta>0$, there exists $n_0>0$ 
such that $\mu(M_{n_0}')>1-\frac{\delta}{2}.$ Furthermore, within $M_{n_0}'$, 
there exists a compact subset $L$ satisfying $\mu(L)>1-\delta.$
Without loss of generality, we can assume the set $L$ is restricted to Lyapunov charts. 

Consider the minimal number $\#B_n$ of Bowen balls $B(\alpha, F^p_n,\cdot,\epsilon)$ covering $L$. 
On one hand, it is easy to see that 
\begin{align}\label{slow entropy-(1)}
\#B_n\ge \frac{1-\delta}{\max_{x\in L}{\mu(B(\alpha,F^p_n,x,\epsilon))}}\ge (1-\delta)e^{n(\Delta-\theta)}.
\end{align} 
On the other hand, take a set of points $\{x_n\}$ in $L$ such that 
\begin{align*}
\max_{p({\bf t})\leq n}d(\alpha({\bf t})x_i,\alpha({\bf t})x_j)\ge \epsilon,\quad \text{ for any }i\neq j. 
\end{align*}
We can choose such a set with the maximal number of elements; 
denote this number by $\#M_n$ and the set by $\Xi$. 
Then $\{B(\alpha,F^p_n,x,\epsilon)\}_{x\in \Xi}$ covers $L$. 
For sufficiently large $n$, by Lemma \ref{ORH23-Lemma 2.2}, 
we conclude that every $ x\in M$ is covered by at most $C_de^{n\theta}$ Bowen balls, 
where $C_d$ is a constant only depend on dimension $d$. 
Thus, we have 
\begin{align}\label{slow entropy-(2)}
\#B_n\leq \#M_n\leq \frac{C_de^{n\theta}}{\min_{x\in \Xi}{\mu(B(\alpha,F^p_n,x,\epsilon))}}\leq C_de^{n(\Delta+2\theta)}.
\end{align}
Combining inequalities \eqref{slow entropy-(1)} and \eqref{slow entropy-(2)}, 
and noting that we can let $\theta\to 0$ as $\epsilon\to 0$, 
we have 
$$
sh_{\mu}(\alpha,p)=\lim_{\epsilon, \delta \to 0}\limsup_{n\to\infty}\frac{\log{\#B_n}}{n}=\Delta=\sum_{i=1}^{L}\gamma_i \max_{{\bf t}: p({\bf t})\leq 1} \chi_i({\bf t}).
$$ 
This completes the proof of Theorem \ref{mainresult}.
\end{proof}

\begin{remark}
From the above argument, we can easily see that 
$$
\lim_{\epsilon, \delta \to 0}\limsup_{n\to\infty}\frac{\log{\#B_n}}{n}=\lim_{\epsilon, \delta \to 0}\liminf_{n\to\infty}\frac{\log{\#B_n}}{n}.
$$ 
Hence in the definition of slow entropy, we obtain
$$
sh_{\mu}(\alpha, p)=\lim_{\epsilon, \delta \to 0}\limsup_{n\to\infty}\frac{\log{S_d (\alpha, F^p_n, \epsilon, \delta)}}{n}=\lim_{\epsilon, \delta \to 0}\liminf_{n\to\infty}\frac{\log{S_d (\alpha, F^p_n, \epsilon, \delta)}}{n}.
$$
\end{remark}

Before proving Theorem \ref{bk}, we first introduce the following definitions and results as preliminaries. 

The following lemma describes the selection of a diffeomorphism from the abelian action.

\begin{lemma}\label{choose f}
There exists a  ${\bf t}\in \mathbb{R}^k$ such that $f:=\alpha({\bf t})$ satisfies the following properties: 
\begin{itemize}
\item[(1)] $p({\bf t})\leq 1$; 
\item[(2)] $\mu$ is ergodic with respect to $f$; 
\item[(3)] there is no extra zero exponent, i.e., 
for any ${\bf s}\in \mathbb{R}^k$, 
 the Lyapunov exponent $\chi_i({\bf s})$ of $\alpha({\bf s})$ 
 vanishes if the corresponding Lyapunov exponent of $\chi_i(f)$ vanishes; 
\item[(4)] for any ${\bf s}\in \mathbb{R}^k$, 
 the Lyapunov exponents in the decomposition of $\alpha({\bf s})$ relative to $f$ are distinct.
\end{itemize}
\end{lemma}

\begin{proof}
We only need to focus on the Lyapunov exponents $\chi_i$ for which there exists ${\bf s}\in\mathbb{R}^k$ such that $\chi_i({\bf s})$ does not vanish. Therefore it suffices to prove that property (4) holds for some ${\bf t}\in \mathbb{R}^k$. 
Fix ${\bf t}_0\in \mathbb{R}^k$. Suppose there exists ${\bf t}_1\in \mathbb{R}^k$ such that 
two distinct Lyapunov exponents in the decomposition of $\alpha({\bf t}_1)$ 
relative to $\alpha({\bf t}_0)$ coincide. 

First, we consider the following simple case; 
the general case can be handled by the same argument. 
For some positive integer $q$, assume that 
\[
\chi_0^g>\chi_1^g>\chi_2^g>\cdots>\chi_{q}^g 
\quad \text{and}\quad 
\chi_1^f>\chi_2^f>\cdots>\chi_{q}^f,
\]
where the direct sum of the Lyapunov subspace for $\chi_0^g,\chi_1^g$ 
are $E_1$, the Lyapunov subspace of $\chi_1^f$,  
and $\chi_i^g$ and $\chi_i^f$ belong to the same Lyapunov subspace $E_i$ for $2\leq i\leq q$. 

Set 
\[
\epsilon_0:=\frac{1}{100}\min\Big\{\chi_q^g,\,\chi_q^f,\,\chi_{i}^f-\chi_{i+1}^f,\,\chi_{j}^g-\chi_{j+1}^g: \,1\leq i<q, \,0\leq j<q\Big\}.
\]
Then there exists $0<\epsilon<\epsilon_0$ such that 
the Lyapunov exponents of $\alpha({\bf t}_0+\epsilon {\bf t}_1)$ are
\[
\chi^f_1+\epsilon \chi_0^g 
> \chi^f_1+\epsilon \chi_1^g 
> \chi^f_2+\epsilon \chi_2^g 
> \cdots 
> \chi^f_q+\epsilon \chi_q^g.
\]
In the general case, the argument is similar: for sufficiently small $\epsilon$, 
only finitely many values of $\epsilon$ can cause coincidences among the Lyapunov exponents of $\alpha({\bf t}_0+\epsilon {\bf t}_1)$. 
Thus, by replacing $f$ with $\alpha({\bf t}_0+\epsilon {\bf t}_1)$ and repeating this process if necessary, 
we obtain $f$ such that for any ${\bf t}\in \mathbb{R}^k$, 
the Lyapunov exponents in the decomposition of $\alpha({\bf t})$ relative to $f$ are distinct.

This completes the proof of the lemma. 
\end{proof}

\begin{remark}
Obviously, if the ergodic invariant measure $\mu$ is hyperbolic, then property (3) implies that no nontrivial exponent of $\alpha$ vanishes for $f$. 
\end{remark}

Given the diffeomorphism $f:=\alpha({\bf t})$ in Lemma \ref{choose f}, 
let $u$ be the dimension of the unstable Lyapunov subspace and $s$ be that of the stable one. 
We denote the Lyapunov exponents of the ergodic invariant measure $\mu$ (corresponding to the nontrivial exponents of $f$) by  
\begin{align*}
\chi_1>\cdots>\chi_{u}>\chi_{u+1}=0\,\text{(possible!)}>\chi_{u+2}>\cdots> \chi_{L}, 
\end{align*}
where $L=u+s+1$ and $E_i$ are the corresponding Lyapunov subspaces with  $d_i:=\operatorname{dim} E_i $. 
We fix the order of the exponents as this once and for all.  Let $W^i$ be the $i$-foliation integrated by $\displaystyle \bigoplus_{1\le j\le i} E_j$ when $i\le u$, and by $\displaystyle \bigoplus_{L+1-i\le j\le L}E_j$ when $i\ge u+2$. 
Let $\xi_i$ be a measurable partition subordinate to $W^i$, and $\{\mu_x^i\}$ be a system of the induced conditional measures. For convenience, we use $\mu_x^s$ to denote $\mu_x^L$.

The following result assumes that $\alpha$ is a $C^{1+r}$ abelian $\mathbb{R}^k$-action with an ergodic invariant measure $\mu$, and the assumption that $\mu$ is hyperbolic or absolutely continuous is not used. 

\begin{lemma}\label{le1}
Let $\displaystyle a_i=\max_{{\bf t}: p({\bf t})\leq 1} \chi_i({\bf t})$ for $1\leq i\leq L$, take $\epsilon\leq \min\left\{\frac{1}{100ml},\epsilon_0\right\}$, then for any $x\in \Gamma_{l,\epsilon}$, there exists $s(x)>0$, such that when $s\ge s(x)$,
\begin{align*}
K^{-1}\bigg(\prod_{i\leq u}B_i\Big(0, \frac{\epsilon e^{-(a_i+3\epsilon)n}}{m+1}\Big)&\times B_{u+1}\Big(0, \frac{\epsilon e^{-3\epsilon n}}{m+1}\Big)
\times \prod_{i\ge u+2}B_i\Big(0, \frac{\epsilon e^{-(a_i+3\epsilon)n}}{m+1}\Big)\bigg)\\
&\subset \Phi_x ^{-1}\Big(B(\alpha, F^p_s, x, \epsilon)\Big)\subset 
\\l\bigg(\prod_{i\leq u}B_i\Big(0, (m+1)\epsilon e^{-(a_i-3\epsilon)n}\Big)&\times B_{u+1}\Big(0, (m+1)\epsilon\Big)\\
&\times \prod_{i\ge u+2}B_i\Big(0, (m+1)\epsilon e^{-(a_i-3\epsilon)n}\Big)\bigg).
\end{align*}
Here, $B_i$ is the ball centered at origin in $\mathbb{R}^{d_i}$, and $\prod$ denotes the usual direct product.
\end{lemma}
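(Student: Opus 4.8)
\emph{Idea of the proof.} The plan is to express the Bowen ball in the coordinates of the Lyapunov chart $\Phi_x$ at $x$ and to read off the constraints block by block along the Lyapunov splitting, the key arithmetic input being that $\max_{t:\,p(t)\le s}\chi_i(t)=s\,a_i$ by linearity of $\chi_i$ and homogeneity of $p$. Write $y=\Phi_x(v)$ and split $v=(v_0,v_1,\dots,v_D)$ along the mutually orthogonal subspaces $D\Phi_x(0)^{-1}(T\mathcal{O}(x))$ and $R_1(x),\dots,R_D(x)$ of property (i) in Proposition \ref{p2}. Since $\alpha(t)=\Phi_{\alpha(t)x}\circ\tilde t_x\circ\Phi_x^{-1}$ on the overlap of charts, property (vi) applied at the point $\alpha(t)x\in\Gamma$, which has $l(\alpha(t)x)\le l\,e^{\epsilon\|t\|}$, turns the requirement $d(\alpha(t)x,\alpha(t)y)\le\epsilon$ into a two-sided bound of size $\epsilon$ on $|\tilde t_x(v)|$, up to the universal constant $K$ and the factor $l\,e^{\epsilon\|t\|}$; so everything reduces to converting this, simultaneously for all $t$ with $p(t)\le s$, into coordinate-wise bounds on the $v_i$. (Here I identify $p$ with the norm $\|\cdot\|$ used in the charts: they are equivalent, which affects only the numerical constant in front of $\epsilon$.)

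The first real step is to estimate the connecting maps on the rectangles that actually occur: products of $R_i$-balls of radius of order $\epsilon\,e^{-(a_i\pm2\epsilon)s}$ for $i\ne u+1$, and of radius of order $\epsilon$ (up to a factor $e^{-2\epsilon s}$) in the neutral block $i=u+1$, where $a_{u+1}=0$. For $i\ne u+1$ the functional $\chi_i$ is nonzero and $\{p\le1\}$ is symmetric, so $a_i=\max_{p\le1}\chi_i=\max_{p\le1}|\chi_i|>0$; taking $\epsilon$ small enough that $a_i-2\epsilon>0$ for all such $i$ makes every such radius shrink to $0$ as $s\to\infty$. Hence there is an $s(x)$, depending on $x$ only through the bound $l$ on $l(x)$, such that for $s\ge s(x)$ all these rectangles lie well inside $B(l(x)^{-1})$ and (by property (v)) their $\tilde t_x$-images for $p(t)\le s$ stay inside the domain of $\Phi_{\alpha(t)x}$. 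On such rectangles the linear part $D\tilde t_x(0)$ is block-diagonal for $\bigoplus_i R_i$ and scales $R_i$ by $e^{\chi_i(t)\pm\epsilon\|t\|}$ (property (iii)), while property (iv) makes the nonlinear remainder Lipschitz-negligible there; combining this with $\chi_i(t)\le a_i s$ for $p(t)\le s$ yields $|(\tilde t_x v)_i|\le e^{(a_i+2\epsilon)s}|v_i|$ for every $t$ with $p(t)\le s$, a matching lower bound $|(\tilde t_x v)_i|\ge e^{(a_i-2\epsilon)s}|v_i|$ at the (finitely many) times realizing $\max_{p(t)\le s}\chi_i(t)=s\,a_i$, and $|(\tilde t_x v)_{u+1}|\le e^{2\epsilon s}|v_{u+1}|$.

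The two inclusions now follow. For the left one, take $v\in K^{-1}(\text{the small product})$, so $|v_i|\le\frac{K^{-1}\epsilon}{m+1}e^{-(a_i+2\epsilon)s}$ for $i\ne u+1$ and $|v_{u+1}|\le\frac{K^{-1}\epsilon}{m+1}e^{-2\epsilon s}$; then for every $t$ with $p(t)\le s$ the previous estimates give $|(\tilde t_x v)_i|\le\frac{K^{-1}\epsilon}{m+1}$ in each of the at most $m+1$ blocks, hence $|\tilde t_x v|\le K^{-1}\epsilon$, hence $d(\alpha(t)x,\alpha(t)y)\le K\,|\tilde t_x v|\le\epsilon$; as this holds for all such $t$, $y\in B(\alpha,F^p_s,x,\epsilon)$. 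For the right one, take $y=\Phi_x v\in B(\alpha,F^p_s,x,\epsilon)$: the constraint at $t=0$ gives $|v|\le l\epsilon$, so in particular $|v_{u+1}|\le l(m+1)\epsilon$; and the constraints at the times realizing $\max_{p(t)\le s}\chi_i(t)=s\,a_i$, together with the lower bound above and $|(\tilde t_x v)_i|\le|\tilde t_x v|\le l\,e^{\epsilon\|t\|}\epsilon$, give $|v_i|\le l(m+1)\epsilon\,e^{-(a_i-2\epsilon)s}$ for $i\ne u+1$; thus $v$ lies in $l(\text{the big product})$.

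I expect the real obstacle to be the uniformity over the continuum $\{t:p(t)\le s\}$ in the presence of the degeneration of the chart sizes $l(\alpha(t)x)$ along long orbit segments, which makes a naive direct passage to the chart at $\alpha(t)x$ for every $t$ illegitimate. The rescue is exactly that every rectangle entering the estimate shrinks exponentially in $s$ — this uses $a_i>0$ for $i\ne u+1$ together with the smallness of $\epsilon$ and of the constant $\hat\varepsilon$ of property (v) — so that for $s\ge s(x)$ all the points in play remain inside the relevant charts and the continuum of constraints is governed by the finitely many extremal directions maximizing each $\chi_i$; where control is still delicate, one works block by block inside the intermediate foliations $W^i$ with their adapted metrics, in the spirit of \cite{ly} and \cite{sc}. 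All the surviving chart distortion then costs only the replacement of $\epsilon$ by $2\epsilon$ in the exponents and the explicit prefactors $K^{\pm1}$, $l$, $m+1$ appearing in the statement.
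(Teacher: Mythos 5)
Your proposal is correct and follows essentially the same route as the paper's proof: transfer the Bowen ball to the Lyapunov chart via the connecting map $\tilde t_x=\Phi_{\alpha(t)x}^{-1}\circ\alpha(t)\circ\Phi_x$, use Proposition \ref{p2}(vi) to convert the $d$-distance condition into a size estimate on $|\tilde t_x u|$, use (iii) and (iv) for block-wise upper/lower bounds with the key linear-algebra input $\max_{p(t)\le s}\chi_i(t)=s\,a_i$, and finish the right-hand inclusion by testing at the extremal $t$ realizing $\chi_i(t)=sa_i$ and the $t=0$ constraint for the neutral block. The extra material you add on chart-domain containment (property (v)) and uniformity over the continuum of $t$ is a reasonable elaboration of what the paper leaves implicit, not a different argument.
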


\begin{proof}
Note that, for the neutral direction, 
it will neither contract more than sub-exponentially nor expand more than $(m+1)\epsilon$. 
So we only need to prove the inclusion for the other directions. First, we prove the left hand side inclusion. It is enough to show that for any 
\begin{align*}
u\in \left\{K^{-1}\bigg(\prod_{i\leq u}B_i\Big(0, \frac{\epsilon e^{-(a_i+3\epsilon)n}}{m+1}\Big)\right.\times B_{u+1}\Big(0, \frac{\epsilon e^{-3\epsilon n}}{m+1}\Big)
\left.\times \prod_{i\ge u+2}B_i\Big(0, \frac{\epsilon e^{-(a_i+3\epsilon)n}}{m+1}\Big)\bigg) \right\}
\end{align*}
and any $t$ with $p({\bf t})\leq n$, we have $d(\alpha({\bf t})x,\alpha({\bf t})\Phi_x(u))\leq \epsilon.$

Combining the properties (iii) and (iv) in Proposition \ref{chart2} and the following type estimate 
\begin{align}\label{le1-(1)}
\left| f_1\circ g_1-f_2\circ g_2\right|\leq \left| f_1\circ g_1-f_1\circ g_2\right| + \left| f_1\circ g_2-f_2\circ g_2\right|,
\end{align}
we obtain 
\begin{align*}
 \left|\Phi_{\alpha({\bf t})x}^{-1}\alpha({\bf t})\Phi_x(u)-D\Phi_{\alpha({\bf t})x}^{-1}\alpha({\bf t})\Phi_x(0)u\right|
\leq K^{-1} \epsilon (n+1) \left(\sum_{i=1}^D\epsilon e^{n\epsilon}e^{a_i n+n\epsilon}\frac{ e^{-(a_i+3\epsilon)n}}{m+1}\right). 
\end{align*}
Combining this with property (iii) in Proposition \ref{chart2}, we have 
\begin{align*}
\left|\Phi_{\alpha({\bf t})x}^{-1}\alpha({\bf t})\Phi_x(u)\right|&
\leq \left|\Phi_{\alpha({\bf t})x}^{-1}\alpha({\bf t})\Phi_x(u)-D\Phi_{\alpha({\bf t})x}^{-1}\alpha({\bf t})\Phi_x(0)u\right|+\left|D\Phi_{\alpha({\bf t})x}^{-1}\alpha({\bf t})\Phi_x(0)u\right|\\
&\leq K^{-1} \epsilon (n+1)\left(\sum_{i=1}^D\frac{\epsilon e^{-\epsilon n}}{m+1}\right)+K^{-1}\left(\sum_{i=1}^{D}e^{a_i n+n\epsilon}\frac{\epsilon e^{-(a_i+3\epsilon)n}}{m+1}\right) \\
&\leq K^{-1}\left(\sum_{i=1}^D\frac{\epsilon^2 (n+1)e^{-\epsilon n}}{m+1}+\sum_{i=1}^{D}\frac{\epsilon e^{-2\epsilon n}}{m+1}\right).
\end{align*}
Hence when $n$ is sufficiently large, we conclude that
$$
|\Phi_{\alpha({\bf t})x}^{-1}\alpha({\bf t})\Phi_x(u)|\leq K^{-1}\epsilon$$ 
and it follows that by property (vi) in Proposition \ref{chart2}, we deduce 
$$
d(\alpha({\bf t})x,\alpha({\bf t})\Phi_x(u))\leq K|\Phi_{\alpha({\bf t})x}^{-1}\alpha({\bf t})\Phi_x(u)|\leq \epsilon.
$$

Now we come to the proof of the other side. Assume 
$$
u:=(u_1,u_2,\cdots,u_D)\in \Phi_x ^{-1}B(\alpha, F^p_n, x, \epsilon)
$$
where $u_i\in \mathbb{R}^{d_i}$. It is enough to show that $u_i\in lB_i\Big(0, (m+1)\epsilon e^{-(a_i-3\epsilon)n}\Big)$ for every $i\leq u$, and a similar result holds for the remaining $i$s. First, choose $t$ such that $\chi_i({\bf t})=na_i$, then by the same argument as for \eqref{le1-(1)} and the fact that $\alpha({\bf t})$ expands the distance along the unstable direction, we have 
\begin{align*}
\left|\Phi_{\alpha({\bf t})x}^{-1}\alpha({\bf t})\Phi_x(u)-D\Phi_{\alpha({\bf t})x}^{-1}\alpha({\bf t})\Phi_x(0)u\right|
\leq  \epsilon (n+1) e^{n\epsilon}\left|\Phi_{\alpha({\bf t})x}^{-1}\alpha({\bf t})\Phi_x(u)\right|
\end{align*}
Thus, we have 
\begin{align*}
\left|\Phi_{\alpha({\bf t})x}^{-1}\alpha({\bf t})\Phi_xu\right|\geq \frac{\left|D\Phi_{\alpha({\bf t})x}^{-1}\alpha({\bf t})\Phi_x(0)u\right|}{\epsilon (n+1) e^{n\epsilon}+1} \geq \frac{e^{(a_i-\epsilon)n}|u_i|}{\epsilon (n+1) e^{n\epsilon}+1}.
\end{align*}
By property (vi) in Proposition \ref{chart2}, we derive 
$$
\left|\Phi_{\alpha({\bf t})x}^{-1}\alpha({\bf t})\Phi_xu\right|\leq ld(\Phi_{\alpha({\bf t})x}(\Phi_{\alpha({\bf t})x}^{-1}\alpha({\bf t})\Phi_xu),\alpha({\bf t})x)\leq l\epsilon.
$$
Hence, when $n$ is sufficiently large, we obtain 
$$
\left|u_i\right|\leq \left|\Phi_{\alpha({\bf t})x}^{-1}\alpha({\bf t})\Phi_xu\right|\frac{\epsilon (n+1) e^{n\epsilon}+1}{e^{(a_i-\epsilon)n}}\leq l\epsilon \frac{\epsilon (n+1) e^{n\epsilon}+1}{e^{(a_i-\epsilon)n}}\leq l(m+1)\epsilon e^{-(a_i-3\epsilon)n}.
$$
Thus, we finish the proof of this lemma. 
\end{proof}

We continue with the notions of coarse Lyapunov exponents 
and coarse Lyapunov foliations introduced in \cite{BRW23}.

\begin{definition}[\cite{BRW23} Definition 4.9]
Two Lyapunov exponents $\lambda_i$ and $\lambda_j \in \mathcal{L}$ are equivalent 
if they are positively proportional; that is, 
if there exists a constant $c > 0$ such that $\lambda_i = c\lambda_j$. 
A coarse Lyapunov exponent is an equivalence class in $\mathcal{L}$.
\end{definition}

We use $\widehat{\mathcal{L}}$ to denote the set of coarse Lyapunov exponents. 

\begin{definition}[\cite{BRW23} Definition 4.10]
Given $\chi \in \widehat{\mathcal{L}}$ with $\chi \neq 0$, 
the coarse Lyapunov foliation corresponding to $\chi$ is defined by 
\begin{equation}
\mathcal{W}^\chi := \bigcap_{\{{\bf t} \in \mathbb{R}^d: \chi({\bf t}) > 0\}} \mathcal{W}_{{\bf t}}^u, 
\end{equation}
where $\mathcal{W}_{{\bf t}} ^u$ is the unstable foliation $W^u$ for $\alpha({\bf t})$. 
\end{definition}

Clearly, the coarse Lyapunov foliation $\mathcal{W}^\chi$ is a $C^{1+r}$ foliation. 
The coarse Lyapunov manifold corresponding to $\chi$ through $x$ 
is the leaf $\mathcal{W}^\chi(x)$. 
For $\delta>0$ and $x\in M$, we use $\mathcal{W}^\chi(x,\delta)$ to represent the ball centered at $x$ 
with respect to the induced distance on $\mathcal{W}^\chi(x)$. 
 Let $d_{\mathcal{W}^\chi}$ denote the metric induced by the
 Riemannian structure on leaves of $\mathcal{W}^\chi$.

For convenience, we use ``coarse unstable foliations'' and ``coarse stable foliations'' 
as shorthand for the coarse Lyapunov foliations corresponding to positive Lyapunov exponents 
and negative Lyapunov exponents, respectively.

\section{Hyperbolic Case}\label{set:Hyperbolic Case}

In this section, we give the proof of Theorem \ref{bk} in the case of hyperbolic measure.

When the ergodic invariant measure $\mu$ is hyperbolic, 
given the diffeomorphism $f:=\alpha({\bf t})$ in Lemma \ref{choose f}, 
we denote the coarse Lyapunov exponents of the measure $\mu$ (corresponding to the nontrivial exponents of $f$) by  
$$
\tilde{\chi}_1(f), \cdots, \tilde{\chi}_{u_0}(f), \tilde{\chi}_{u_0+1}(f), \cdots, \tilde{\chi}_{u_0+s_0}(f),
$$ 
where $\tilde{\chi}_{i}(f)$ for $1\leq i\leq u_0$ corresponds to the positive coarse Lyapunov exponents of $f$, 
and $\tilde{\chi}_{u+i}$ for $1\leq i\leq s_0$ corresponds to the negative coarse Lyapunov exponents for $f$. 
Here, $u_0$ and $s_0$ are integers. 
Each $\tilde{\chi}_j$ denotes the set $\{\tilde{\chi}_{j,1},\cdots \tilde{\chi}_{j,k(j)}\}$; 
by the definition of coarse Lyapunov exponents, 
the elements of $\{\tilde{\chi}_{j,1},\cdots \tilde{\chi}_{j,k(j)}\}$ are positively proportional to one another. 
For a general diffeomorphism $\alpha({\bf n})$ with ${\bf n}\in\mathbb{R}^k$ 
induced by the $\mathbb{R}^k$-action, 
we use $\tilde{\chi}_j(\alpha({\bf n}))$ and $\tilde{\chi}_{j,i}(\alpha({\bf n}))$ 
to denote the corresponding notions in a similar manner. 

For each $j\in \{1,\cdots,u_0,u_0+1,\cdots,u_0+s_0\}$, there exists a diffeomorphism $f_j=\alpha({\bf t}_j)$ satisfying $p({\bf t}_j)\leq 1$ 
such that the Lyapunov exponents of 
$f_j$ along the coarse Lyapunov foliation $W^{\tilde{\chi}_j}$
satisfy $\displaystyle \tilde{\chi}_{j,i}(f_j)=\max_{{\bf t}:p({\bf t})\leq 1}\tilde{\chi}_{j,i}({\bf t})$ for any $1\leq i\leq k(j)$. 
This property is ensured by the definition of coarse Lyapunov exponents. 

Following the construction used in \cite{LS82,LY-85-annals-1,LY-85-annals-2}, 
there exists a measurable subordinate partition $\xi_j$ 
which is subordinate to the coarse Lyapunov foliation $W^{\tilde{\chi}_j}$. 
For each $x\in M$, we define the upper pointwise dimension 
and the lower pointwise dimension with respect to $\mu$ by 
$$
d^{\tilde{\chi}_j}_\mu(x):=\limsup_{r\to 0}\frac{\log \mu^{\tilde{\chi}_j}_x\left(W^{\tilde{\chi}_j}(x,r)\right)}{\log r}, \quad \underline{d}^{\tilde{\chi}_j}_\mu(x):=\liminf_{r\to 0}\frac{\log \mu^{\tilde{\chi}_j}_x\left(W^{\tilde{\chi}_j}(x,r)\right)}{\log r}. 
$$
By applying the same definition in Subsection \ref{set:Transversal Hausdorff Dimensions}, 
we define the corresponding transversal Hausdorff dimension 
$\{\gamma_{j,i}\}_i$ (for $\tilde{\chi}_{j,i}$ in 
the coarse Lyapunov foliation $W^{\tilde{\chi}_j}$ for $1\leq i\leq k(j)$). 
For each $j$ and $x\in M$, 
the upper local slow entropy and the lower local slow entropy 
(for the coarse Lyapunov foliation $W^{\tilde{\chi}_j}$) are defined respectively by 
\begin{align*}
sh_j(x):=\lim_{\epsilon\to 0}\limsup_{n\to\infty}-\frac{\log{\mu^{\tilde{\chi}_j}_x(B(\alpha, F^p_n, x, \epsilon))}}{n}
\end{align*}
and 
\begin{align*}
\underline{sh}_j(x):=\lim_{\epsilon\to 0}\liminf_{n\to\infty}-\frac{\log{\mu^{\tilde{\chi}_j}_x(B(\alpha, F^p_n, x, \epsilon))}}{n}. 
\end{align*}

We present a modified version of the Ledrappier-Young entropy formula 
with respect to the coarse Lyapunov foliatiion for our purposes here.

\begin{lemma}\label{entropy formula}
Given $1\leq j\leq u_0$, for $\mu$-almost every $x\in M$, 
\begin{align*}
sh_j(x)=\underline{sh}_j(x)=\sum_{i=1}^{k(j)} \gamma_{j,i} \max_{{\bf t}:p({\bf t})\leq 1}\tilde{\chi}_{j,i}({\bf t})
\end{align*}
and the same conclusion holds for $u_0+1\leq j\leq u_0+s_0$. 
\end{lemma}

\begin{proof}
Although the diffeomorphisms studied in \cite{LY-85-annals-1, LY-85-annals-2} 
are considered under the $C^2$ hypothesis, 
Brown pointed out in \cite{Bro22} that the unstable holonomies within center-unstable sets 
are Lipschitz continuous. This verifies that the Ledrappier-Young entropy formulas hold 
for $C^{1+r}$ diffeomorphisms. 

  Given $x\in M$, $n\in\mathbb{N}$ and $\delta>0$, we define the Bowen ball for $f_j$ with respect to the foliation $ W^{\tilde{\chi}_j}$ by 
\begin{align*}
W^{\tilde{\chi}_j}(f_j, x,n,\delta):=\left\{ y\in W^{\tilde{\chi}_j}(x): d_{W^{\tilde{\chi}_j}}(f_j^ix, f_j^iy)<\delta, \,\forall\, 0\leq i<n\right\}. 
\end{align*}
Recall the definition of $f_j$, for a fixed $\epsilon>0$, 
  there exists a function $n_1:M\to \mathbb{N}$ such that for $\mu$-a.e. $x$ and $n\geq n_1(x)$, 
\begin{align}\label{compare Bowen}
W^{\tilde{\chi}_j}(f_j, x,n,e^{-n\epsilon})\subset W^{\tilde{\chi}_j}(x)\cap B(\alpha, F^p_n, x, \epsilon)\subset W^{\tilde{\chi}_j}(f_j, x,n,e^{n\epsilon})
\end{align}
Following the arguments in \cite{LY-85-annals-2}, 
for $\mu$-a.e. $x\in M$, we have 
\begin{align*}
\sum_{i=1}^{k(j)} \gamma_{j,i} \left(\max_{{\bf t}:p({\bf t})\leq 1}\tilde{\chi}_{j,i}({\bf t})+\epsilon\right)=\lim_{n\to\infty}-\frac{\log{\mu^{\tilde{\chi}_j}_x(W^{\tilde{\chi}_j}(f_j, x,n,e^{-n\epsilon}))}}{n},\\
\sum_{i=1}^{k(j)} \gamma_{j,i} \left(\max_{{\bf t}:p({\bf t})\leq 1}\tilde{\chi}_{j,i}({\bf t})-\epsilon\right)= \lim_{n\to\infty}-\frac{\log{\mu^{\tilde{\chi}_j}_x(W^{\tilde{\chi}_j}(f_j, x,n,e^{n\epsilon}))}}{n}. 
\end{align*}
Combining this with \eqref{compare Bowen} and the arbitrariness of $\epsilon$, 
we finish the proof of this lemma. 
\end{proof}

The above result can be derived from the results in \cite{O24, DQ25}.

\begin{remark}
  Clearly, $sh_j(x)$ and $\underline{sh}_j(x)$ are $f$-invariant functions. 
  By Lemma \ref{entropy formula} and the ergodicity, these functions are constant. 
We denote this constant value by $sh_j$. 
In fact, this result holds for any general ergodic invariant meausre with respect to 
coarse unstable foliations and coarse stable foliations. 
\end{remark}

\begin{lemma}\label{entropy formula-1.5}
Fix $l>1$. 
For any $\epsilon, \delta>0$ with $\epsilon<\delta\leq \min\left\{\frac{1}{100ml},\epsilon_0 \right\}$, 
for every $x\in \Gamma_{l,\delta}$, there exists a constant $b=b(x,l,\epsilon,\delta)>0$ 
such that for any integer $n\gg 1$, 
\begin{align*}
B(\alpha, F^p_n, x,\epsilon)\subset  B(\alpha, F^p_n, x,\delta) \subset B(\alpha, F^p_{n-b}, x,\epsilon).
\end{align*}
\end{lemma}

\begin{proof}
Under the assumption of hyperbolic measure, 
for convenience, we still use the index $i$ as presented in Lemma \ref{le1}, 
even though the case $i=u+1$ is trivial. 
For any $1\leq i\leq u$ and $u+2\leq i\leq u+s$, we have $a_i>0$. 

Clearly, it suffices to prove that $ B(\alpha, F^p_b, x,\delta) \subset B(x,\epsilon)$ for any $x\in \Gamma_{l,\delta}$. 
For each $x\in\Gamma_{l,\delta}$, there exists an integer $b=b(x,l,\epsilon, \delta)\geq s(x)$ such that 
$$
l(m+1)\delta e^{-(a_i-3\epsilon)b}\leq K^{-1}\epsilon, \quad \text{ for any } 1\leq i\leq u \text{ and } u+2\leq i\leq u+s. 
$$
Combining this with Proposition \ref{chart2} (vi), we have 
\begin{align*}
 &\Phi_x ^{-1}\Big(B(\alpha, F^p_b, x, \delta)\Big)\\
 \subset& l\bigg(\prod_{i\leq u}B_i\Big(0, (m+1)\delta e^{-(a_i-3\epsilon)b}\Big) \times \prod_{i\ge u+2}B_i\Big(0, (m+1)\delta e^{-(a_i-3\epsilon)b}\Big)\bigg)\\
 \subset& \prod_{i\leq u}B_i\Big(0, K^{-1}\epsilon \Big) \times \prod_{i\ge u+2}B_i\Big(0, K^{-1}\epsilon \Big)\\
 \subset& \Phi_x ^{-1}\Big( B(x, \epsilon)\Big).
\end{align*}
Therefore, we deduce $B(\alpha, F^p_b, x, \delta)\subset B(x,\epsilon)$ for any $x\in \Gamma_{l,\delta}$. 
This implies that $B(\alpha, F^p_n, x,\delta) \subset B(\alpha, F^p_{n-b}, x,\epsilon)$ holds for such $x$. 
Since the other inequality is obvious, we complete the proof of the lemma. 
\end{proof}

Fix $l>1$ and $\delta\leq \min\left\{\frac{1}{100ml},\epsilon_0 \right\}$. 
Combining Lemma \ref{entropy formula-1.5} with Lemma \ref{entropy formula}, 
we conclude that for any $\epsilon>0$ with 
$\epsilon<\delta$, $1\leq j\leq u_0+s_0$, and any $x\in \Gamma_{l,\delta}$, 
\begin{align}
sh_j(x)&=\underline{sh}_j(x)=\limsup_{n\to\infty}-\frac{\log{\mu^{\tilde{\chi}_j}_x(B(\alpha, F^p_n, x, \epsilon))}}{n}\nonumber \\
&=\liminf_{n\to\infty}-\frac{\log{\mu^{\tilde{\chi}_j}_x(B(\alpha, F^p_n, x, \epsilon))}}{n}
=\sum_{i=1}^{k(j)} \gamma_{j,i} \max_{{\bf t}:p({\bf t})\leq 1}\tilde{\chi}_{j,i}({\bf t}). \label{entropy formula-2}
\end{align}
For choosing a large $l$ and a small $\delta$, the equality \eqref{entropy formula-2} holds on a set whose measure is close to $1$. 

We combine the method in \cite{BPS99} 
with the theory of coarse Lyapunov foliations to establish 
the properties of partitions suitable for the Bowen ball 
$B\left(\alpha, F^p_n, x, \epsilon\right)$.


Below we briefly introduce the method for constructing the partition as presented in \cite{LY-85-annals-2}. 
We use $W^i$ to denote the $i$-th unstable manifold of $f$ from Lemma \ref{choose f}. 
We take a system of $(\epsilon,l)$-local charts $\{\Phi_x\}$ and 
choose $l_0$ such that $\Lambda=\{x\in M\,:\,l(x)\leq l_0\}$ has positive $\mu$-measure, 
and $S=\cup D_\alpha$ as described in \cite[Page 554]{LY-85-annals-2}. Let 
\[
\hat{\xi}_i(x) =
\begin{cases}
W^i(x)\cap D_\alpha & \text{if } x \in D_\alpha, \\
M - S & \text{if } x \notin S.
\end{cases}
\]
Then $\displaystyle \hat{\xi}_i = \bigvee_{n \geq 0} f^n \hat{\xi}_i$, which is an increasing partition 
subordinate to $W^i$. 
We use $\xi^u$ and $\xi^s$ to denote the measurable partition corresponding to 
the unstable manifold and stable manifold of $f$, respectively. 
We define $\mathcal{P}=\xi^u\bigvee \xi^s$. 

Given a small $0<\epsilon<1$, there exist a set $\Gamma\subset M$ with $\mu(\Gamma)>1-\frac{1}{4}\epsilon$, 
an integer $n_0\geq 1$, and a constant $C>1$ such that for every $x\in \Gamma$ and 
any integer $ n \geq n_{0} $, 
\begin{itemize}
  \item[(a)] For all integers $ k, l \geq 1 $ we have
    \begin{align}
      C^{-1} e^{-(l+k) (h+\epsilon)} & \leq \mu\left(\mathcal{P}_{k}^{l}(x)\right) \leq C e^{-(l+k) (h-\epsilon)}, \label{(a)-1} \\
      C^{-1} e^{-l \tilde{h}_j-l \epsilon}         & \leq \mu_{x}^{W^{\tilde{\chi}_j}}\left(\mathcal{P}_{0}^{l}(x)\right) \leq C e^{-l \tilde{h}_j+l \epsilon},\quad\forall\,1\leq j\leq u_0, \label{(a)-2}\\
      C^{-1} e^{-k \tilde{h}_{u_0+j}-k \epsilon}         & \leq \mu_{x}^{W^{\tilde{\chi}_{u_0+j}}}\left(\mathcal{P}_{k}^{0}(x)\right) \leq C e^{-k \tilde{h}_{u_0+j}+k \epsilon}, \quad\forall\,1\leq j\leq s_0, \label{(a)-3}
    \end{align}
    where $h$, $\tilde{h}_j$ and $\tilde{h}_{u+j}$ are the Kolmogorov-Sinai entropies of $f$ 
    with respect to $\mu$, $\mu_{x}^{W^{\tilde{\chi}_j}}$ and $\mu_{x}^{W^{\tilde{\chi}_{u+j}}}$ respectively. 
    \item[(b)]
\begin{align}
 W^{\tilde{\chi}_j}\left(x\right) \cap \bigcap_{n \geq 0} \mathcal{P}_{n}^{0}(x) \supset W^{\tilde{\chi}_j}\left(x, e^{-n_{0}}\right), \quad\forall\,1\leq j\leq u_0,\label{(b)-1}\\ 
 W^{\tilde{\chi}_{u_0+j}}\left(x \right)  \cap \bigcap_{n \geq 0} \mathcal{P}_{0}^{n}(x) \supset W^{\tilde{\chi}_{u_0+j}}\left(x, e^{-n_{0}}\right), \quad\forall\,1\leq j\leq s_0. \label{(b)-2}
\end{align}

  \item[(c)] For each $1\leq i\leq u_0+s_0$, we have 
  \begin{align}
e^{-n(sh_j+\epsilon)}&\leq \mu^{\tilde{\chi}_j}_x(B(\alpha, F^p_n, x, \epsilon))\leq \mu^{\tilde{\chi}_j}_x(B(\alpha, F^p_n, x, 4\epsilon))\leq e^{-n(sh_j-\epsilon)}, \label{(c)-1}\\
e^{-n(\tilde{\delta}_j+\epsilon)}&\leq \mu_x^{W^{\tilde{\chi}_j}}\left( W^{\tilde{\chi}_j}\left(x, e^{-n}\right) \right)\leq e^{-n(\tilde{\delta}_j-\epsilon)}. \label{(c)-2}
\end{align}

\item[(d)] Let $ a$ be the integer part of $\displaystyle 2(1+\epsilon) \max_{ i=1}^L \max_{{\bf t}:p({\bf t})\leq 1}\left(\chi_i({\bf t}) +1\right)   $. 
The property (b) implies that 
\begin{align}
\mathcal{P}_{a n}^{a n}(x) &\subset B(\alpha, F^p_n, x, \epsilon) \subset \mathcal{P}(x), \label{(d)-1}\\ 
\mathcal{P}_{0}^{a n}(x) \cap W^{\tilde{\chi}_j}(x) &\subset B(\alpha, F^p_n, x, \epsilon)\cap W^{\tilde{\chi}_j}(x) \nonumber\\
&\subset \mathcal{P}(x) \cap W^{\tilde{\chi}_j}(x), \quad\forall\,1\leq j\leq u_0, \label{(d)-2}\\
\mathcal{P}_{a n}^{0}(x) \cap W^{\tilde{\chi}_{u_0+j}}(x) &\subset B(\alpha, F^p_n, x, \epsilon)\cap W^{\tilde{\chi}_{u_0+j}}(x)\nonumber \\
&\subset \mathcal{P}(x) \cap W^{\tilde{\chi}_{u_0+j}}(x), \quad\forall\,1\leq j\leq s_0. \label{(d)-3}
\end{align}
\item[(e)] We define
$Q_{n}(x):=\bigcup \mathcal{P}_{a n}^{a n}(y)$, 
where the union is over all $ y \in \Gamma $ for which
\begin{align*}
\mathcal{P}_{0}^{a n}(y) \cap B(\alpha, F^p_n, x, \epsilon) \cap W^{\tilde{\chi}_j}(x) \neq \varnothing ,\quad \forall\,1\leq j\leq u_0, 
\end{align*}
and
\begin{align*}
\mathcal{P}_{a n}^{0}(y) \cap B(\alpha, F^p_n, x, \epsilon) \cap W^{\tilde{\chi}_{u_0+j}}(x) \neq \varnothing, \quad \forall\,1\leq j\leq s_0 .
\end{align*}
By the continuous dependence of 
coarse Lyapunov foliations in the $C^{1+\alpha}$ topology on the base point, we obtain 
\begin{align}\label{P-(16)}
B(\alpha, F^p_n, x, \epsilon) \cap \Gamma \subset Q_{n}(x) \subset B(\alpha, F^p_n, x, 4\epsilon) ,
\end{align}
and for each $ y \in Q_{n}(x) $, we have 
\[
\mathcal{P}_{an}^{an}(y) \subset Q_{n}(x).
\]
\item[(f)] For every $ x \in \Gamma $ and $ n \geq n_{0} $ (increasing $ n_{0} $ if necessary), we have
\begin{align}
W^{\tilde{\chi}_j}(x)\cap B(\alpha, F^p_n, x, \epsilon) \cap \Gamma &\subset Q_{n}(x) \cap \xi^{u}(x) \nonumber\\
&\subset W^{\tilde{\chi}_j}(x)\cap B(\alpha, F^p_n, x, 4\epsilon), \quad \forall\,1\leq j\leq u_0, \label{(f)-1}\\
W^{\tilde{\chi}_{u_0+j}}(x)\cap B(\alpha, F^p_n, x, \epsilon) \cap \Gamma &\subset Q_{n}(x) \cap \xi^{s}(x) \nonumber\\
&\subset W^{\tilde{\chi}_{u_0+j}}(x)\cap B(\alpha, F^p_n, x, 4\epsilon), \quad \forall\,1\leq j\leq s_0.\label{(f)-2}
\end{align}
\end{itemize}

Properties \eqref{(a)-1}-\eqref{(a)-3} follow from the Shannon-McMillan-Bremian theorem 
and its ``leaf-wise'' versions to the partition $\mathcal{P}$. 
The latter is obtained by replacing $W^i(x)$ in $\hat{\xi}_i(x)$ with $W^{\tilde{\chi}_j}$. 
Based on the equality \eqref{entropy formula-2}, 
by choosing $l$ sufficiently large and $\delta$ sufficiently small, 
the properties \eqref{(c)-1} and \eqref{(c)-2} follow. 
Therefore, for sufficiently large $n$, 
by removing a set of measure less than $\frac{1}{4}\epsilon$ with respect to $\mu$, 
we ensure that the remaining set $\Gamma$ satisfies all the above conditions. 
Obviously, we have 
\begin{align*}
h=\sum_{j=1}^{u_0} \tilde{h}_j=\sum_{i=1}^{s_0} \tilde{h}_{u+i}. 
\end{align*}

  Using Lemma \ref{ORH23-Lemma 2.3} and the Borel density lemma (see, for example, \cite[Proposition 3]{BPS99}), 
  there exist an integer $n_1\geq n_0$ and 
a measurable subset $\Gamma_0\subset \Gamma$ with $\mu(\Gamma_0)>1-\frac{1}{2}\epsilon$, 
such that for any $x\in \Gamma_0$, $1\leq j\leq u_0+s_0$, and every $n\geq n_1$, 
\begin{align}
  \mu_x^{W^{\tilde{\chi}_j}}\left( B\left(\alpha, F^p_n, x, \epsilon\right) \cap \Gamma\right) & \geq e^{-n\epsilon}\mu_x^{W^{\tilde{\chi}_j}}\left( B\left(\alpha, F^p_n, x, \epsilon\right)\right), \label{density-(1)} \\
   \mu\left(B\left(\alpha, F^p_n, x, \epsilon\right) \cap \Gamma\right)                             & \geq e^{-n\epsilon} \mu\left(B\left(\alpha, F^p_n, x, \epsilon\right)\right), \label{density-(2)} \\
    \mu_x^{W^{\tilde{\chi}_j}}\left( W^{\tilde{\chi}_j}\left(x, e^{-n}\right)\cap \Gamma \right) &\geq  \frac{1}{2} \mu_x^{W^{\tilde{\chi}_j}}\left( W^{\tilde{\chi}_j}\left(x, e^{-n}\right) \right), \label{density-(3)}
\end{align} 

\begin{proposition}\label{Proposition 4}
  There exists a positive constant $ D=D(\Gamma_0)<1 $ such that for every $ k \geq n_1 $ and $ x \in \Gamma $, we have 
    \begin{align*} 
        \mu_x^{W^{\tilde{\chi}_j}}\left(\mathcal{P}_{k}^{0}(x) \cap \Gamma\right) \geq D, \quad\forall\,1\leq j\leq u_0,\\
        \mu_x^{W^{\tilde{\chi}_{u_0+j}}}\left(\mathcal{P}_{0}^{k}(x) \cap \Gamma\right) \geq D, \quad\forall\,1\leq j\leq s_0.
    \end{align*}
\end{proposition}

\begin{proof}
By \eqref{(b)-1}, for every \( k \geq n_1 \), $1\leq j\leq u_0$, and \( x \in \Gamma \), 
the set \( W^{\tilde{\chi}_j}(x)\cap \mathcal{P}_k^0(x) \cap \Gamma \) contains the set 
\( W^{\tilde{\chi}_j}(x, e^{-n_0}) \cap \Gamma \). It follows from \eqref{(c)-2} and \eqref{density-(1)} that
\[
\mu_x^{W^{\tilde{\chi}_j}}(\mathcal{P}_k^0(x) \cap \Gamma) \geq \mu_x^{W^{\tilde{\chi}_j}}(W^{\tilde{\chi}_j}(x, e^{-n_0})) \geq \frac{1}{2}  e^{-\tilde{\delta}_j n_1 - n_1 \epsilon} .
\]
Similarly, for $1\leq j\leq s_0$, we obtain 
\[
\mu_x^{W^{\tilde{\chi}_{u+j}}}\left(\mathcal{P}_{0}^{k}(x) \cap \Gamma\right)\geq \frac{1}{2}  e^{-\tilde{\delta}_{u+j} n_1 - n_1 \epsilon} . 
\]
Thus, this proposition holds by choosing $\displaystyle D=\min_{1\leq j\leq u_0+s_0} \frac{1}{2}  e^{-\tilde{\delta}_j n_1 - n_1 \epsilon}$. 
\end{proof}

\begin{proposition}\label{Proposition 5}
  For every $ x \in \Gamma $ and $ n \geq n_{1} $, we have
  \begin{align*}
     \mathcal{P}_{an}^{an}(x) \cap W^{\tilde{\chi}_j}(x)=\mathcal{P}_{0}^{an}(x) \cap W^{\tilde{\chi}_j}(x), \quad\forall\,1\leq j\leq u_0, \\
    \mathcal{P}_{an}^{an}(x) \cap W^{\tilde{\chi}_{u_0+j}}(x)=\mathcal{P}_{an}^{0}(x) \cap W^{\tilde{\chi}_{u_0+j}}(x), \quad\forall\,1\leq j\leq s_0. 
  \end{align*}
\end{proposition}
 
\begin{proof}
We will prove the first equality, 
the proof of the second equality follows in a similar manner.

Combining \eqref{(b)-1} with \eqref{(d)-2}, we conclude 
\begin{align*}
\mathcal{P}^{an}_0(x) \cap W^{\tilde{\chi}_j}(x) &\subset \mathcal{P}^{an}_0(x) \cap W^{\tilde{\chi}_j}(x, e^{-n}) \subset \mathcal{P}^{an}_0(x) \cap W^{\tilde{\chi}_j}(x, e^{-n_0}) \\
&\subset \mathcal{P}^{an}_0(x) \cap \mathcal{P}^0_{an}(x) \cap W^{\tilde{\chi}_j}(x) = \mathcal{P}_{an}^{an}(x) \cap W^{\tilde{\chi}_j}(x).
\end{align*}
Since \( \mathcal{P}_{an}^{an}(x) \subset \mathcal{P}_{an}^0(x) \), 
this completes the proof of the first identity. 
\end{proof}

We continue to introduce some modified notions used in \cite{BPS99}.

Fix $ x \in \Gamma_0 $ and an integer $ n \geq n_{1} $. 
  We define two classes $\mathcal{R}(n)$ and $\mathcal{F}(n)$ as follows. 
\begin{align*}
\mathcal{R}(n)&:=\left\{\mathcal{P}_{an}^{an}(y) \subset \mathcal{P}(x): \mathcal{P}_{an}^{an}(y) \cap \Gamma \neq \varnothing\right\}; \\
\mathcal{F}(n)&:=\left\{\mathcal{P}_{an}^{an}(y) \subset \mathcal{P}(x): \mathcal{P}_{an}^{0}(y) \cap \Gamma_0 \neq \varnothing, \text { and } \mathcal{P}_{0}^{an}(y) \cap \Gamma_0 \neq \varnothing\right\}. 
\end{align*}
  We call the elements of these classes  ``rectangles''. 
  
For each set $ A \subset \mathcal{P}(x) $ and every $1\leq j\leq u_0+s_0$, 
we we give the following definitions: 
\begin{align*}
N(n, A)&:=\operatorname{Card}\{R \in \mathcal{R}(n): R \cap A \neq \varnothing\}; \\
N^{j}(n, y, A)&:=\operatorname{Card}\left\{R \in \mathcal{R}(n): R \cap W^{\tilde{\chi}_{j}}(y) \cap \Gamma \cap A \neq \varnothing \right\}; \\
\widehat{N}^{j}(n, y, A)&:=\operatorname{Card}\left\{R \in \mathcal{F}(n): R \cap W^{\tilde{\chi}_{j}}(y) \cap A \neq \varnothing \right\}.
\end{align*}

\begin{lemma}\label{Lemma 1}
For each $ y \in \mathcal{P}(x) \cap \Gamma $ and $1\leq j\leq u_0+s_0$, 
and integer $ n \geq n_{0} $, we have
\begin{align*}
N^{j}\left(n, y, Q_{n}(y)\right) \leq C\mu_{y}^{W^{\tilde{\chi}_{j}}}\left(B\left(\alpha, F^p_n, x, 4\epsilon\right)\right) \exp(an (\tilde{h}_j+ \epsilon)).
\end{align*}
\end{lemma}

\begin{proof}
We will prove the inequality for the case $1\leq j\leq u_0$; 
the proof of the other case is similar.

Let $ z \in   R \cap \xi^{s}(y) \cap Q_{n}(y) \cap \Gamma $ for some $ R \in \mathcal{R}(n) $. 
Applying Proposition \ref{Proposition 5} and \eqref{(a)-2}, we have 
\begin{align*}
  \mu_{y}^{W^{\tilde{\chi}_{j}}}(R)=\mu_{y}^{W^{\tilde{\chi}_{j}}}\left(\mathcal{P}_{0}^{a n}(z)\right)=\mu_{z}^{W^{\tilde{\chi}_{j}}}\left(\mathcal{P}^{a n}_{0}(z)\right)\geq C^{-1}e^{-an(\tilde{h}_j+\epsilon)}.
\end{align*}
Since the inequality \eqref{(f)-1} and $ R \cap \xi^{u}(y) \cap Q_{n}(y) \neq \varnothing $ 
implies $ R \in \mathcal{R}(n) $, we obtain
\begin{align*}
&\mu_{y}^{W^{\tilde{\chi}_{j}}}\left(B\left(\alpha, F^p_n, x, 4\epsilon\right)\right) \geq \mu_{y}^{W^{\tilde{\chi}_{j}}}\left(Q_{n}(y)\right) \\
\geq& \operatorname{Card}\left\{R \in \mathcal{R}(n): R \cap W^{\tilde{\chi}_{j}}(y) \cap \Gamma \cap A \neq \varnothing\right\} \\
&\quad\quad\quad\quad \cdot \min \left\{\mu_{y}^{W^{\tilde{\chi}_{j}}}(R): R \in \mathcal{R}(n) \text { and } R \cap W^{\tilde{\chi}_{j}}(y) \cap Q_{n}(y) \cap \Gamma \neq \varnothing\right\}\\
\geq & N^{j}\left(n, y, Q_{n}(y)\right) \cdot C^{-1}\exp(-an(\tilde{h}_j+\epsilon)). 
\end{align*} 
\end{proof}

\begin{lemma}\label{Lemma 2}
For each $ y \in \mathcal{P}(x) \cap \Gamma_0 $ and integer $ n \geq n_{1} $, we have
\[
\mu\left(B\left(\alpha, F^p_n, y, \epsilon\right)\right) \leq C  N\left(n, Q_{n}(y)\right) \exp(n\epsilon- 2a n (h- \epsilon)). 
\]
\end{lemma}

\begin{proof}
Mote that $ R \cap Q_{n}(y) \neq \varnothing $ implies $ R \in \mathcal{R}(n)$, 
    by \eqref{(a)-1}, \eqref{P-(16)} and \eqref{density-(2)}, we obtain 
\begin{align*}
\mu\left(B\left(\alpha, F^p_n, y, \epsilon\right)\right)&\leq e^{n\epsilon} \mu\left(B\left(\alpha, F^p_n, y, \epsilon\right)\cap \Gamma\right) \leq e^{n\epsilon} \mu\left(Q_{n}(y) \cap \Gamma\right) \\
&\leq e^{n\epsilon} N\left(n, Q_{n}(y)\right) \cdot \max \left\{\mu(R): R \in \mathcal{R}(n) \text { and } R \cap Q_{n}(y) \neq \varnothing\right\}\\
&\leq  N\left(n, Q_{n}(y)\right) \cdot \exp (n\epsilon- 2a n (h- \epsilon)),
\end{align*}
Thus we finish the proof of this lemma. 
\end{proof}

Next, we estimate the number of elements in the classes $ \mathcal{R}(n) $ and $ \mathcal{F}(n) $. 

\begin{lemma}\label{Lemma 3}
For $ \mu $-almost every $ y \in \mathcal{P}(x) \cap \Gamma_0 $, and $n\geq n_0$, 
there exist an integer $n_2(y)\geq n_1 $ and a constant $C_1$ such that for each $n\geq n_2(y) $, we have 
\begin{align*}
N\left(n+2, Q_{n+2}(y)\right) \leq C_1  \prod_{j=1}^{u_0+s_0}\widehat{N}^{j}\left(n, y, Q_{n}(y)\right) \exp(n\epsilon+4an\epsilon). 
\end{align*}
\end{lemma}

\begin{proof}
By apply $A=\Gamma_0$ to Lemma \ref{ORH23-Lemma 2.3}, for $\mu$-almost every $y\in \Gamma_0$, 
there exists an integer $n_2(y)\geq n_1$ such that for any $n\geq n_2(y)$, we have 
\begin{align*}
\mu(B\left(\alpha, F^p_n, x, \epsilon\right) \cap \Gamma_0)  \geq e^{-n\epsilon} \mu(B\left(\alpha, F^p_n, x, \epsilon\right)). 
\end{align*}
Combining this with \eqref{P-(16)}, for any $ n \geq n_2(y) $, we obtain 
\begin{align}
e^{n\epsilon} \mu\left(Q_{n}(y) \cap \Gamma_0\right) & \geq e^{n\epsilon}  \mu\left(B\left(\alpha, F^p_n, y, \epsilon\right) \cap \Gamma_0\right) \geq \mu\left(B\left(\alpha, F^p_n, y, \epsilon\right)\right)\nonumber \\
& \geq \mu\left(B\left(\alpha, F^p_{n+2}, y, \epsilon\right)\right) \geq \mu\left(Q_{n+2}(y)\right). \label{P-(22)}
\end{align}
For any $m\geq n_2(y)$, by \eqref{(a)-1}, we conclude 
\begin{align}
&\mu\left(Q_{m}(y)\right)=\sum_{\mathcal{P}_{a m}^{a m}(z) \subset Q_{m}(y)} \mu\left(\mathcal{P}_{a m}^{a m}(z)\right) 
\geq N\left(m, Q_{m}(y)\right)  C^{-1} \exp(- 2a m (h+\epsilon));\label{L-1}\\
&\mu\left(Q_{m}(y) \cap \Gamma_0\right)=\sum_{\substack{\mathcal{P}_{am}^{a m}(z) \subset Q_{n}(y)}} \mu\left(\mathcal{P}_{am}^{am}(z) \cap \Gamma_0\right) \leq N_{m} C \exp(-2am  (h-\epsilon)),\label{L-2}
\end{align}
where $ N_{m} $ is the number of rectangles $ \mathcal{P}_{am}^{am}(z) \in \mathcal{R}(m)$ 
with $\mathcal{P}_{am}^{am}(z)\cap \Gamma_0 \neq\varnothing$.
Setting $ m=n+2$ and using \eqref{P-(22)}, \eqref{L-1}, \eqref{L-2}, we obtain
\begin{align}
  N\left(n+2, Q_{n+2}(y)\right) &\leq \mu\left(Q_{n+2}(y)\right) \cdot C\exp(2a (n+2) (h+\epsilon))\nonumber \\
&\leq  \mu\left(Q_{n}(y) \cap \Gamma_0\right) \cdot C\exp(n\epsilon+2a (n+2) (h+\epsilon))\nonumber \\
  &\leq N_{n} \cdot   C^{2}\exp(n\epsilon+4an\epsilon+4a (h+\epsilon))\nonumber\\
&\leq N_{n} \cdot   C_1\exp(n\epsilon+4an\epsilon), \label{P-(23)}
\end{align}
where we choose $C_1=C^2\exp(4a (h+\epsilon))$. For any $ y \in \Gamma$, we have
\begin{align*}
\mathcal{P}_{0}^{a n}(y) \cap \xi^{u}(y) \cap \Gamma_0 \neq \varnothing \text{ and } \mathcal{P}_{a nr}^{0}(y) \cap \xi^{s}(y) \cap \Gamma_0 \neq \varnothing.
\end{align*}

Consider a rectangle $ \mathcal{P}_{an}^{an}(v) \subset Q_{n}(y)$ with 
$\mathcal{P}_{an}^{an}(v)\cap \Gamma_0 \neq \varnothing$. 
The rectangles $ \mathcal{P}_{an}^{0}(v) \cap \mathcal{P}_{0}^{an}(y) $ and 
$ \mathcal{P}_{an}^{0}(y) \cap \mathcal{P}_{0}^{an}(v) $ are in $ \mathcal{F}(n) $ and 
they intersect the coarse unstable foliations and the coarse stable foliations at $ y $ respectively. 
Then, for any $ \mathcal{P}_{an}^{an}(v) \subset Q_{n}(y) $ with 
$ \mathcal{P}_{an}^{an}(v) \cap \Gamma_0 \neq \varnothing$, 
we can associate it with the tuple of sets 
\begin{align*}
&\left( \mathcal{P}_{an}^{0}(v) \cap \mathcal{P}_{0}^{an}(y) \cap W^{\tilde{\chi}_1}(y),\cdots, 
\mathcal{P}_{an}^{0}(v) \cap \mathcal{P}_{0}^{an}(y) \cap W^{\tilde{\chi}_u}(y), \right.\\
&\quad\quad \left.\mathcal{P}_{an}^{0}(y) \cap \mathcal{P}_{0}^{an}(v) \cap W^{\tilde{\chi}_{u+1}}(y), \cdots,
\mathcal{P}_{an}^{0}(y) \cap \mathcal{P}_{0}^{an}(v) \cap W^{\tilde{\chi}_{u+s}}(y) \right) 
\end{align*} 
in
\begin{align*}
  \widehat{N}^{1}\left(n, y, Q_{n}(y)\right)&\times\cdots\times \widehat{N}^{u_0}\left(n, y, Q_{n}(y)\right)\\
  &\times \widehat{N}^{u_0+1}\left(n, y, Q_{n}(y)\right)\times \cdots \times \widehat{N}^{u_0+s_0}\left(n, y, Q_{n}(y)\right)
\end{align*}
This correspondence is injective, 
 since each rectangle in $Q_n(y)$ intersecting $\Gamma_0$
 correspond to a unique tuple of elements in the product space above. 
Thus, we conclude 
\begin{align}\label{L-3}
\prod_{j=1}^{u+s}\widehat{N}^{j}\left(n, y, Q_{n}(y)\right)  \geq N_{n}. 
\end{align}
Combining \eqref{P-(23)} and \eqref{L-3}, we obtain the desired inequality. 
\end{proof}

In the next lemma, we give upper bounds for the number of rectangles in $\mathcal{F}(n)$.
Recall that $m$ is the dimension of the manifold $M$. 

\begin{lemma}\label{Lemma 4}
For each $ x \in \Gamma_0 $, there exists a constant $C_2$ 
such that for any $n\geq n_1$, $1\leq j\leq u_0+s_0$, 
\begin{align*}
   \widehat{N}^{j}(n, x, \mathcal{P}(x)) \leq  C_2 \exp(a n (\tilde{h}_j+2m\epsilon)). 
\end{align*}
\end{lemma}

\begin{proof}
  We only prove the case $ j=1$; 
  the proofs for the remaining cases are similar.

For any $y_i\in \Gamma_0$, 
by Proposition \ref{Proposition 4}, Proposition \ref{Proposition 5} 
and \eqref{(a)-2}, we conclude
\begin{align}
N^{1}\left(n, y_{i}, \mathcal{P}^{0}_{an}\left(y_{i}\right)\right) & \geq \frac{\mu_{y_{i}}^{W^{\tilde{\chi}_{1}}}\left(\mathcal{P}^{0}_{an}\left(y_{i}\right) \cap \Gamma\right)}{\max \left\{\mu_{z}^{W^{\tilde{\chi}_{1}}}\left(\mathcal{P}_{an}^{an}(z)\right): z \in W^{\tilde{\chi}_{u+j}}\left(y_{i}\right) \cap \mathcal{P}(x) \cap \Gamma\right\}} \nonumber\\
& \geq \frac{D}{\max \left\{\mu_{z}^{W^{\tilde{\chi}_{1}}}\left(\mathcal{P}_{an}^{an}(z)\right): z \in W^{\tilde{\chi}_{j}}(y_i) \cap \mathcal{P}(x) \cap \Gamma\right\}} \nonumber\\
& =\frac{D}{\max \left\{\mu_{z}^{W^{\tilde{\chi}_{1}}}\left(\mathcal{P}_{0}^{an}(z)\right): z \in W^{\tilde{\chi}_{j}}(y_i) \cap \mathcal{P}(x) \cap \Gamma\right\}} \nonumber\\
& \geq D C^{-1} \exp\left(a n\left( \tilde{h}_1- \epsilon\right)\right).  \label{P-(25)}
\end{align}
Similarly, for $u_0+1\leq j\leq u_0+s_0$, we have $N^{j}\left(n, y_{i}, \mathcal{P}^{0}_{an}\left(y_{i}\right)\right)\geq D C^{-1} e^{a n( \tilde{h}_j- \epsilon)}$. 
Through \eqref{(a)-1}, we obtain
\begin{align}\label{P-(26)}
N(n, \mathcal{P}(x)) \leq \frac{\mu(\mathcal{P}(x))}{\min \left\{\mu\left(\mathcal{P}_{an}^{an}(z)\right): z \in \mathcal{P}(x) \cap \Gamma\right\}} \leq  C \exp\left(2 a n (h+\epsilon)\right). 
\end{align}

  Fix an integer $t$ with $1\leq t\leq u_0$. 
 Since the partition $ \mathcal{P} $ is countable, 
 there exists a sequence of points $\{ y_{i} \}_i$ such that 
$$
\bigcup_i \left(\mathcal{P}_{0}^{an}\left(y_{i}\right)\cap W^{\tilde{\chi}_{1}}(x) \right)= \mathcal{P}(x)\cap W^{\tilde{\chi}_{1}}(x) , 
$$
and these rectangles are mutually disjoint. 
Without loss of generality, we assume that 
for any $i$ where $ \mathcal{P}^{an}_{0}\left(y_{i}\right) \cap W^{\tilde{\chi}_{1}}(x)\cap \Gamma_0 \neq \varnothing $, 
we have $ y_{i} \in W^{\tilde{\chi}_{1}}(x) \cap \Gamma_0 $. 
For each $i$, there exists a sequence of points $\{y_{i,k}\}_k$ such that 
 $$
\bigcup_k \left(\mathcal{P}_{0}^{an}\left(y_{i,k}\right)\cap W^{\tilde{\chi}_{2}}(x) \right)= \mathcal{P}(x)\cap W^{\tilde{\chi}_{2}}(x) , 
$$
and these rectangles are mutually disjoint. 
Without loss of generality, we assume that 
for any $k$ where $ \mathcal{P}^{an}_{0}\left(y_{i,k}\right) \cap W^{\tilde{\chi}_{2}}(x)\cap \Gamma_0 \neq \varnothing $, 
we have $ y_{i,k} \in W^{\tilde{\chi}_{2}}(x) \cap \Gamma_0 $. 
Continue this process up to the foliation $W^{\tilde{\chi}_{u+s}}$, 
for each $l$, there exists a sequence of points $\{y_{i,k,\cdots,l,r}\}_k$ such that 
 $$
\bigcup_r \left(\mathcal{P}^{0}_{an}\left(y_{i,k,\cdots,l,r}\right)\cap W^{\tilde{\chi}_{2}}(x) \right)= \mathcal{P}(x)\cap W^{\tilde{\chi}_{u+s}}(x) , 
$$
and these rectangles are mutually disjoint. 
We assume that for any $r$ where $$ \mathcal{P}^{an}_{0}\left(y_{i,k,\cdots,l,r}\right) \cap W^{\tilde{\chi}_{u+s}}(x)\cap \Gamma_0 \neq \varnothing, $$
we have $ y_{i,k,\cdots,l,r} \in W^{\tilde{\chi}_{u+s}}(x) \cap \Gamma_0 $. 

  Let $\tilde{I}$ be the set of indices $(i,k,\cdots,l,r)$ for which 
  $\mathcal{P}_{0}^{an}\left(y_{i}\right) \cap \Gamma_0 \neq \varnothing$, then
\begin{align}
&N(n, \mathcal{P}(x))\\
\geq& \sum_{i}\sum_{k}\cdots \sum_{l}\sum_{r} N^{u_0+s_0}\left(n, y_{i,k,\cdots,l,r}, \mathcal{P}^{0}_{an}\left(y_{i,k,\cdots,l,r}\right)\right)\nonumber\\
\geq& \sum_{i}\sum_{k}\cdots \sum_{l}\sum_{r} D C^{-1} \exp\left(a n( \tilde{h}_{u_0+s_0}- \epsilon)\right)\nonumber\\
\geq& \sum_{i}\sum_{k}\cdots \sum_{l}   N^{u_0+s_0-1}\left(n, y_{i,k,\cdots,l}, \mathcal{P}^{0}_{an}\left(y_{i,k,\cdots,l}\right)\right)   D C^{-1} \exp\left(a n( \tilde{h}_{u_0+s_0}- \epsilon)\right)\nonumber\\
\geq& \sum_{i}\sum_{k}\cdots \sum_{l}  D^2 C^{-2} \exp\left(a n\left(\left(\tilde{h}_{u_0+s_0-1}+\tilde{h}_{u_0+s_0}\right)- 2\epsilon\right)\right)\nonumber\\
\geq& \sum_{i} D^{u_0+s_0-1} C^{-(u_0+s_0-1)} \exp\left(a n\left(\left(\sum\limits_{j=2}^{u_0+s_0}\tilde{h}_{j}\right)- (u_0+s_0-1)\epsilon\right)\right)\nonumber\\
=& \sum_{i} D^{u_0+s_0-1} C^{-(u_0+s_0-1)} \exp\left(a n\left(\left(2h-\tilde{h}_1\right)- (u_0+s_0-1) \epsilon\right)\right) \label{P-(24)} 
\end{align}

By the definition of $\hat{N}^{s}(n, x, \mathcal{P}(x))$, for any $1\leq j\leq s$, we have
\begin{align*}
\widehat{N}^{1}(n, x, \mathcal{P}(x))=\operatorname{Card}\left\{i: \mathcal{P}_{0}^{an}\left(y_{i}\right) \cap W^{\tilde{\chi}_{1}}(y_i)\cap\Gamma_0 \neq \varnothing\right\}. 
\end{align*}
Combining \eqref{P-(24)}, \eqref{P-(25)} and \eqref{P-(26)}, we conclude
\begin{align*}
 & C e^{ 2a n (h+\epsilon)}  \geq N(n, \mathcal{P}(x)) \\
 \geq &\hat{N}^{1}(n, x, \mathcal{P}(x)) \cdot D^{u_0+s_0-1} C^{-(u_0+s_0-1)} \exp\left(a n\left(\left(2h-\tilde{h}_1\right)- (u_0+s_0-1) \epsilon\right)\right). 
\end{align*}
Thus the first inequality follows by choosing $C_2=D^{-(u_0+s_0-1)}C^{u_0+s_0}$. 
\end{proof}

Before continuing our proof, we introduce the following notions of slow Bowen entropy.

Let $(X,d)$ be a compact smooth manifold and $\alpha:\mathbb{R}^k\to \text{Diff}(X)$ 
a locally free $\mathbb{R}^k$-action on $X$. 
For $ r>0 $, $ N \in \mathbb{N}$, $s \in \mathbb{R} $ and a nonempty subset $ Z \subset X $, 
we define
\[
  M_{N}^{s}(Z):=\inf \sum_{i \in I} e^{-n_{i} s}
\]
where the infimum is taken over all finite or countable covers $ \left\{B\left(\alpha, F^p_{n_i}, x_{i},\epsilon\right)\right\}_{i \in I} $ of
$ Z$  with $ n_{i} \geq N$ and $x_{i} \in X $ for each $i\in I$. 
Since  $M_{N}^{s}(Z)$ is increasing with respect to $N$, 
the limit $M^{s}(Z)=\lim\limits_{N \rightarrow \infty} M_{N}^{s}(Z)$ exists.
The quantity $ M^{s}(Z) $ exhibits a critical behavior with respect to the parameter $s$, 
transitioning from $\infty$ to $0$ at a certain value. 
We define the {\bf slow Bowen topological entropy} as
\begin{align*}
  sh^{B}(\alpha,Z):=\inf \left\{s: M^{s}(Z)=0\right\}=\sup \left\{s: M^{s}(Z)=\infty\right\}.
\end{align*}
Given a Borel probability measure $\mu$ on $X$ and a point $x\in M$, we define 
\begin{align*}
\underline{sh}_\mu(x):=
\lim_{\epsilon\to 0}\liminf_{s\to \infty}\frac{-\log(\mu(B(\alpha, F^p_s, x, \epsilon)))}{s}. 
\end{align*}

The proof of this lemma is relatively straightforward, so we omit it here; 
readers may refer to the proof of Lemma A.1 in \cite{DQ25} for relevant details. 

\begin{lemma}\label{Billingsley type}
Let $(X,d)$ be a compact smooth manifold and $\alpha:\mathbb{R}^k\to \text{Diff}(X)$ 
a locally free $\mathbb{R}^k$-action on $X$. 
  Let $\mu$ be a Borel probability measure on $X$ and $E$ a Borel subset of $X$ with $\mu(E)>0$. 
  Given $r>0$ and $0<s<\infty$, 
  if $\underline{sh}_{\mu}(x)\geq s$ for all $x\in E$, then $sh^{B}(\alpha,E)\geq s$.
\end{lemma}

The following lemma provides a comparison between the number of rectangles
 in $ \mathcal{F}(n) $ and the number of rectangles in $ \mathcal{R}(n) $.

\begin{lemma}\label{Lemma 5}
For $ \mu $-almost every $ y \in \mathcal{P}(x) \cap \Gamma_0 $, $1\leq j\leq u_0+s_0$, we have
\begin{align*}
\varlimsup\limits_{n \rightarrow+\infty} \dfrac{\widehat{N}^{j}\left(n, y, Q_{n}(y)\right)}{N^{j}\left(n, y, Q_{n}(y)\right)} \exp(-7 a nm \epsilon)<1.
\end{align*}
\end{lemma}

\begin{proof}
We will provide a proof for the inequality in the case $1\leq j\leq u_0$, 
with the proof of the case $u_0+1\leq j\leq u_0+s_0$ following in an analogous manner.

For a fixed $1\leq j\leq u_0$, we define the set $F$ as follows:  
\[
  F:=\left\{y \in \Gamma_0: \varlimsup_{n \rightarrow+\infty} \frac{\hat{N}^{j}\left(n, y, Q_{n}(y)\right)}{N^{j}\left(n, y, Q_{n}(y)\right)} \exp(-7 a n m\epsilon) \geq 1\right\}.
\]
To prove this lemma, it suffices to show that $\mu(F)=0$. 
We proceed by contradiction and assume that $ \mu(F)>0 $. 
Based on \eqref{(f)-1}and \eqref{density-(1)}, 
for each $ n \geq n_{1} $ and $ y \in \Gamma_0 $, we have 
\begin{align}\label{L-4}
\mu_{y}^{W^{\tilde{\chi}_{j}}}\left(Q_{n}(y)\right) \geq \mu_{y}^{W^{\tilde{\chi}_{j}}}\left(B(\alpha, F^p_n, y,\epsilon) \cap \Gamma\right) \geq \exp\left(-n\epsilon\right)\mu_{y}^{W^{\tilde{\chi}_{j}}}\left(B(\alpha, F^p_n, y,\epsilon)\right). 
\end{align}
According to \eqref{(a)-2}, \eqref{(c)-1}, \eqref{L-4} and Proposition \ref{Proposition 5}, we conclude
\begin{align}
N^{j}\left(n, y, Q_{n}(y)\right) & \geq \frac{\mu_{y}^{W^{\tilde{\chi}_{j}}}\left(Q_{n}(y)\right)}{\max \left\{\mu_{z}^{u}\left(\mathcal{P}_{an}^{an}(z)\right): z \in W^{\tilde{\chi}_{j}}(y) \cap \mathcal{P}(x) \cap \Gamma\right\}}\nonumber \\
& \geq e^{-n\epsilon} \frac{\mu_{y}^{W^{\tilde{\chi}_{j}}}\left(B(\alpha, F^p_n, y,\epsilon)\right)}{\max \left\{\mu_{z}^{u}\left(\mathcal{P}_{0}^{an}(z)\right): z \in W^{\tilde{\chi}_{j}}(y) \cap \mathcal{P}(x) \cap \Gamma\right\}} \nonumber \\
& \geq C^{-2}\exp(-n\epsilon- n (sh_j-a\tilde{h}_j+(1+a)\epsilon)). \label{P-(28)}
\end{align}
For each $ y \in F $, by \eqref{P-(28)}, there exists an increasing sequence 
$ \left\{r_{k}\right\}_{k=1}^{\infty}=\left\{r_{k}(y)\right\}_{j=1}^{\infty} $ 
of positive integers such that for any $j$, we have 
\begin{align}
\widehat{N}^{j}\left(m_k, y, Q_{m_k}(y)\right)&\geq \frac{1}{2} N^{j}\left(r_{k}, y, Q_{r_k}(y)\right) \exp(7 a r_k m\epsilon) \nonumber\\
&\geq \frac{1}{2} C^{-2}\exp( -r_k (sh_j-a\tilde{h}_j+(1+a-7am)\epsilon)) \label{P-(29)}
\end{align}
Let $ F^{\prime} \subset F $ be the set of points $ y \in F $ for which the following limit exists: 
\begin{align*}
\lim _{n \rightarrow \infty} -\frac{\log \mu_{y}^{W^{\tilde{\chi}_{j}}}\left(B(\alpha, F^p_n, y,\epsilon)\right)}{n}=sh_j. 
\end{align*}
Then $ \mu\left(F^{\prime}\right)=\mu(F)>0 $, and there exists 
 $ y \in F $ such that 
\begin{align*}
\mu_{y}^{W^{\tilde{\chi}_{j}}}(F)=\mu_{y}^{W^{\tilde{\chi}_{j}}}\left(F^{\prime}\right)=\mu_{y}^{W^{\tilde{\chi}_{j}}}\left(F^{\prime} \cap \xi^{u}(y)\right)>0,\\
\lim _{n \rightarrow \infty} -\frac{\log \mu_{y}^{W^{\tilde{\chi}_{j}}}\left(B(\alpha, F^p_n, x,\epsilon)\right)}{n}=sh_j,
\quad \text{ for any } x\in F^{\prime} \cap W^{\tilde{\chi}_{j}}(y). 
\end{align*}
Let $\mu_{y}^{W^{\tilde{\chi}_{j}}}$ be the Borel probability measure in Lemma \ref{Billingsley type}, 
we obtain
\begin{align}\label{P-(30)}
sh^{B}\left(f,F^{\prime} \cap W^{\tilde{\chi}_{j}}(y)\right)\geq sh_j. 
\end{align}
Let us consider the countable collection of balls
\[
  \mathfrak{B}=\left\{B(\alpha, F^p_{m_j(z)}, z,4\epsilon): z \in F^{\prime} \cap W^{\tilde{\chi}_{j}}(y),\quad j=1,2, \ldots\right\}.
\]
Applying Lemma \ref{ORH23-Lemma 2.2}, for any $ L>0 $, there exists a sequence of points $ \left\{z_{i} \in F^{\prime} \cap \xi^{u}(y)\right\}_{i=1}^{\infty} $ and
a sequence of integers $ \left\{t_{i}\right\}_{i=1}^{\infty} $, where $ t_{i} \in\left\{r_{k}\left(z_{i}\right)\right\}_{k=1}^{\infty} $ and
$t_{i}>L $ for each $ i $ such that we can find a subcover $\mathfrak{C}\subset \mathfrak{B}$ 
of $ F^{\prime} \cap \xi^{u}(y) $,
\[
  \mathfrak{C}=\left\{B(\alpha, F^p_{t_i}, z_i,4\epsilon): i=1,2, \ldots\right\},
\]
and each set $ Q_{t_i}(z_i) $ appears in the sum 
$ \sum\limits_{i: t_{i}=q} \widehat{N}^{u}\left(t_i, z_{i}, Q_{t_i}(z_i)\right) $ 
at most $C_me^{q\epsilon}$ times, then
\begin{align}\label{L-5}
  \sum_{i: t_{i}=q} \hat{N}^{u}\left(q, z_{i}, Q_{t_i}(z_i)\right) \leq C_m \exp\left(q\epsilon\right) \hat{N}^{u}(q, y, \mathcal{P}(y)).
\end{align}
Combining \eqref{P-(29)}, \eqref{L-5} and Lemma \ref{Lemma 4}, we obtain
\begin{align*}
 &M^{sh_j-\epsilon}_{L}\left(f,F^{\prime} \cap \xi^{u}(y)\right)\leq  \sum_{i=1}^{\infty} \exp(-t_{i}\left(sh_j-\epsilon\right)) \\
\leq &\sum_{i=1}^{\infty} \hat{N}^{j}\left(t_{i}, z_{i}, Q_{t_i}(z_i)\right) \cdot 2 C^2 \exp\left(-t_{i}\left(sh_j-\epsilon\right)+ t_i\left(sh_j-a\tilde{h}_j+(1+a-7am)\epsilon\right)\right)   \\
= & 2 C^2 \sum_{q=1}^{\infty} \exp\left(q\left(-a\tilde{h}_j+(2+a-7am)\epsilon\right)\right)  \sum_{i: t_{i}=q} \widehat{N}^{u}\left(q, z_{i}, Q_{t_i}(z_i)\right) \\
\leq& 2 C^2 \sum_{q=1}^{\infty} \exp\left(q\left(-a\tilde{h}_j+(2+a-7am)\epsilon\right)\right)  \cdot C_m \exp\left(q\epsilon\right)\widehat{N}^{u}\left(q, y, \mathcal{P}(y)\right) \\
\leq& 2 C^2 C_m C_2 \sum_{q=1}^{\infty} \exp\left(q\left(3+a-5am\right)\epsilon\right) <\infty. 
\end{align*}
Since $ L $ and $ t_{i} $ can be chosen sufficiently large, we conclude that
\[
sh^{B}\left(f,F^{\prime} \cap \xi^{u}(y)\right) \leq sh_j-\epsilon<sh_j,
\]
which contradicts \eqref{P-(30)}. 
Therefore, we have $ \mu(F)=0 $, proving the second inequality.
\end{proof}

By Lemma \ref{Lemma 5}, for $ \mu $-a.e. $y \in \mathcal{P}(x) \cap \Gamma_0 $, 
there exists an integer $ n_{3}(y) \geq n_{2}(y) $ such that 
for all $ n \geq n_{3}(y) $, $1\leq j\leq u_0+s_0$, we have
\begin{align}
\widehat{N}^{j}\left(n, y, Q_{n}(y)\right)<N^{j}\left(n, y, Q_{n}(y)\right) \exp(7 a nm \epsilon). \label{P-(31)}
\end{align}
By Lusin's theorem, for every $ \epsilon>0 $, 
there exists a compact subset $ \Gamma_{\epsilon} \subset \Gamma_0 $ satisfying
\[
\mu\left(\Gamma_{\epsilon}\right)>\mu(\Gamma_0)-\frac{1}{3}\epsilon>1-\epsilon, \quad \text { and }\quad n_{\epsilon} := \sup \left\{n_{0}, n_{1}(y): y \in \Gamma_{\epsilon}\right\}<\infty,
\]
such that the inequalities \eqref{P-(31)} hold for every $ n \geq n_{\epsilon} $ and $1\leq j\leq u+s$.

\begin{lemma}\label{Lemma 6}
 For every $ \epsilon>0 $, 
 there exists a constant $C_3>0$ such that 
 if $ y \in \Gamma_{\epsilon} $ and $ n \geq n_{\epsilon} $, then 
\begin{align*}
\mu\left(B(\alpha, F^p_{n}, y,4\epsilon)\right)\geq C_3\exp\left(-n\sum_{i=1}^{u_0+s_0}sh_j-20anm^2\epsilon  \right).
\end{align*}
\end{lemma}

\begin{proof}
For any $ z \in \Gamma_{\epsilon} \cap Q_{n}(y) $, $1\leq j\leq u_0+s_0$ and $ n \geq n_{\epsilon} $, 
by \eqref{P-(31)}, we have
\begin{align}
N^{j}\left(n, y, Q_{n}(y)\right) &\leq \widehat{N}^{j}\left(n, y, Q_{n}(y)\right)=\widehat{N}^{j}\left(n, z, Q_{n}(y)\right)\nonumber\\
&<N^{j}\left(n, z, Q_{n}(y)\right) \exp(7 a n m\epsilon),\nonumber\\
N^{j}\left(n, y, Q_{n}(y)\right) &\leq \inf \left\{N^{j}\left(n, z, Q_{n}(y)\right): z \in \Gamma_{\epsilon} \cap Q_{n}(y)\right\} \exp(7 a n m\epsilon). \label{P-(33)}
\end{align}
Since $ N\left(n, Q_{n}(y)\right) $ is equal to the number of rectangles $ R \subset Q_{n}(y) $, 
for $ y \in \Gamma_{\epsilon} $ and $ n \geq n_{\epsilon} $, by using \eqref{P-(28)} and \eqref{P-(33)}, we obtain 
\begin{align}
&N\left(n, Q_{n}(y)\right)\nonumber\\
\geq& \widehat{N}^{1}\left(n, y, Q_{n}(y)\right) \times \prod_{j=2}^{u_0+s_0}\inf \left\{N^{j}\left(n, z, Q_{n}(y)\right): z \in Q_{n}(y)\right\}  \nonumber\\
\geq& N^{1}\left(n, y, Q_{n}(y)\right) \times \prod_{j=2}^{u_0+s_0} N^{j}\left(n, y, Q_{n}(y)\right) \exp\left(- 7an(u_0+s_0-1)m\epsilon \right)\label{P-1}\\
\geq& C^{-2(u_0+s_0)} \exp\left(-n\left(\sum_{i=1}^{u_0+s_0}sh_j-2ah\right)-n(u_0+s_0)(2+a)\epsilon -7an(u_0+s_0-1)m\epsilon   \right). \nonumber
\end{align}
Through \eqref{(a)-1} and \eqref{P-(16)}, we obtain
\begin{align*}
N\left(n, Q_{n}(y)\right) &\leq \frac{\mu\left(Q_{n}(y)\right)}{\min \left\{\mu\left(\mathcal{P}_{an}^{an}(z)\right): z \in Q_{n}(y) \cap \Gamma\right\}} \nonumber\\
&\leq \mu\left(B(\alpha, F^p_{n}, y,4\epsilon)\right) \cdot C \exp( 2a n (h+\epsilon)). 
\end{align*}
Combining this inequality with \eqref{P-1}, we conclude 
\begin{align*}
&\mu\left(B(\alpha, F^p_{n}, y,4\epsilon)\right)\\
\geq& C^{-2(u_0+s_0)-1}\exp\left(-n\sum_{i=1}^{u_0+s_0}sh_j-n(u_0+s_0)(2+a)\epsilon -7an(u_0+s_0-1)m\epsilon -2an\epsilon  \right)\\
\geq &C^{-2(u_0+s_0)-1}\exp\left(-n\sum_{i=1}^{u_0+s_0}sh_j-20anm^2\epsilon  \right).
\end{align*}
We finish the proof by choosing $C_3=C^{-2(u_0+s_0)-1}$. 
\end{proof}

\begin{lemma}\label{Lemma 7}
  For $ \mu $-a.e. $ y \in \mathcal{P}(x) \cap \Gamma_\epsilon$ and any $n\geq n_\epsilon$, 
  there exists a constant $C_4>0$ such that 
\begin{align*}
   \mu\left(B(\alpha, F^p_{n+2}, y,\epsilon)\right)  \leq C_4  \exp\left(-n \sum_{j=1}^{u_0+s_0}sh_j  \right)\exp\left( 20anm^2\epsilon \right). 
\end{align*}
\end{lemma}

\begin{proof}
By Lemma \ref{Lemma 2} and Lemma \ref{Lemma 3},
for $\mu$-a.e. $ y \in \mathcal{P}(x) \cap \Gamma_\epsilon $ and $ n \geq n_\epsilon $, we have
\begin{align}
       & \mu\left(B(\alpha, F^p_{n+2}, y,\epsilon)\right)\nonumber                                                                                                      \\
  \leq & C N\left(n+2, Q_{n+2}(y)\right)  \exp((n+2)\epsilon- 2a (n+2)(h- \epsilon))  \nonumber  \\
  \leq & CC_1 \prod_{j=1}^{u_0+s_0}\widehat{N}^{j}\left(n, y, Q_{n}(y)\right)  \exp\left( (2n+2)\epsilon-2a(n+2)(h-\epsilon)+4an\epsilon\right)\label{P-5}.
\end{align}
For each $1\leq j\leq u_0+s_0$, by \eqref{P-(31)} and Lemma \ref{Lemma 1}, we obtain
\begin{align}
\widehat{N}^{j}\left(n, y, Q_{n}(y)\right)&<N^{j}\left(n, y, Q_{n}(y)\right) \exp(7 a nm \epsilon)\nonumber\\
&\leq C\mu_{y}^{W^{\tilde{\chi}_{j}}}\left(B\left(\alpha, F^p_n, x, 4\epsilon\right)\right) \exp(an (\tilde{h}_j+\epsilon+7m\epsilon)) \label{P-6}
\end{align}
Combining \eqref{P-5}, \eqref{P-6} and $\displaystyle \sum_{j=1}^{u_0+s_0} \tilde{h}_j=2h$, we derive 
\begin{align*}
    &\mu\left(B(\alpha, F^p_{n+2}, y,\epsilon)\right)  \\
 \leq & C^{u_0+s_0+1}C_1 \prod_{j=1}^{u_0+s_0} \mu_{y}^{W^{\tilde{\chi}_{j}}}\left(B\left(\alpha, F^p_n, x, 4\epsilon\right)\right) \exp\left(an \left(2h+ (u_0+s_0)\epsilon+7m(u_0+s_0)\epsilon\right)\right) \\
&\quad\quad\quad\quad\quad \cdot \exp\left( (2n+2)\epsilon-2a(n+2)(h-\epsilon)+4an\epsilon\right)\\
 \leq & C^{u_0+s_0+1}C_1 \left(\prod_{j=1}^{u_0+s_0} \exp\left( -n(sh_j-\epsilon) \right) \right)\exp\left(an \left(2h+ (u_0+s_0)\epsilon+7m(u_0+s_0)\epsilon\right)\right) \\
&\quad\quad\quad\quad\quad \cdot \exp\left( (2n+2)\epsilon-2a(n+2)(h-\epsilon)+4an\epsilon\right)\\
\leq& C_4  \exp\left(-n \sum_{j=1}^{u_0+s_0}sh_j  \right) \exp\left( (3+6a)n\epsilon+an(u_0+s_0)(1+7m)\epsilon \right)\\
\leq & C_4  \exp\left(-n \sum_{j=1}^{u_0+s_0}sh_j  \right)\exp\left( 20anm^2\epsilon \right),
\end{align*}
where $C_4=C^{u_0+s_0+1}C_1 \exp(2\epsilon-4a(h-\epsilon))$. 
Thus, we complete the proof of this lemma.
\end{proof}

Finally, we provide the proof of Theorem \ref{bk} for the hyperbolic case. 

\begin{proof}[Proof of Theorem \ref{bk} for the hyperbolic case]
Combining Lemma \ref{Lemma 6}, Lemma \ref{Lemma 7} and Lemma \ref{entropy formula}, 
for any $x\in \Gamma_\epsilon$, we have 
\begin{align*}
\sum_{i=1}^{D}\gamma_i \max_{{\bf t}: p({\bf t})\leq 1} \chi_i({\bf t})-20am^2\epsilon&=\sum_{j=1}^{u_0+s_0}sh_j -20am^2\epsilon \\
&\leq \liminf_{n\to \infty}\frac{-\log(\mu(B(\alpha, F^p_n, x, \epsilon)))}{n}\\
&\leq  \limsup_{n\to \infty}\frac{-\log(\mu(B(\alpha, F^p_n, x, \epsilon)))}{n}\\
&\leq \sum_{j=1}^{u_0+s_0}sh_j +20am^2\epsilon=\sum_{i=1}^{D}\gamma_i \max_{{\bf t}: p({\bf t})\leq 1} \chi_i({\bf t})+20am^2\epsilon. 
\end{align*}
Since $\mu(\Gamma_\epsilon)>1-\epsilon$, 
letting $\epsilon\to 0$ completes the proof of Theorem \ref{bk} for the hyperbolic case. 
\end{proof}

\section{Absolutely Continuous Case}\label{set:Absolutely Continuous Case}

In this section, we will give the proof of Theorem \ref{bk} for the absolutely continuous case.

Based on the method in Section \ref{set:Hyperbolic Case}, 
by considering the coarse unstable foliations, 
we can obtain similar results to
Lemma \ref{Lemma 6} and Lemma \ref{Lemma 7}. 
These results imply the following result: 

\begin{proposition}\label{main-unstable}
For $\mu$-a.e. $x\in M$, we have 
\begin{align}
\lim_{\epsilon\to 0}\liminf_{n\to \infty}\frac{-\log(\mu^u_x (B(\alpha, F^p_n, x, \epsilon)))}{n}=\lim_{\epsilon\to 0}\limsup_{n\to \infty}\frac{-\log(\mu^u_x(B(\alpha, F^p_n, x, \epsilon)))}{n},
\end{align}
and this limit is equal to 
$$
\sum_{i=1}^{u}\gamma_i \max_{{\bf t}: p({\bf t})\leq 1} \chi_i({\bf t}).
$$
\end{proposition}

We have a similar result for the stable manifold. 

\begin{proposition}\label{main-stable}
For $\mu$-a.e. $x\in M$, we have 
\begin{align}
\lim_{\epsilon\to 0}\liminf_{n\to \infty}\frac{-\log(\mu^s_x (B(\alpha, F^p_n, x, \epsilon)))}{n}=\lim_{\epsilon\to 0}\limsup_{n\to \infty}\frac{-\log(\mu^s_x(B(\alpha, F^p_n, x, \epsilon)))}{n},
\end{align}
and this limit is equal to 
$$
\sum_{i=u+2}^{L}\gamma_i \max_{{\bf t}: p({\bf t})\leq 1} \chi_i({\bf t}).
$$
\end{proposition}

Now we consider the case where \(\mu\) is absolutely continuous. 
Then \(\gamma_i = d_i=\operatorname{dim}E_i\). Since all volume forms are equivalent, we do not actually require \(\alpha\) to preserve \(\mu\); 
moreover, the limit that we consider remains unchanged by invoking the following well-known result.

\begin{lemma}\label{th1}
Suppose $X$ is a Euclidean space, $\mu$ is a Borel measure on $X$, 
and let $\lambda$ be the Lebesgue measure, 
then $\mu$ is absolutely continuous with respect to $\lambda$ if and only if 
$$
\liminf_{r\to 0}\frac{\mu(B(x,r))}{\lambda(B(x,r))}<\infty,\quad \mu\text{-}a.e.\,\, x\in X.
$$
\end{lemma}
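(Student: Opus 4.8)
The plan is to prove the two implications separately; the forward one is soft, and the reverse one carries all the weight.

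For ($\Rightarrow$), assuming $\mu\ll\lambda$ I would invoke the Radon--Nikodym theorem to write $d\mu=g\,d\lambda$ with $g\in L^1_{\mathrm{loc}}(X,\lambda)$, $g\ge 0$, and then apply the Lebesgue differentiation theorem: for $\lambda$-a.e.\ $x$,
$$\lim_{r\to 0}\frac{\mu(B(x,r))}{\lambda(B(x,r))}=\lim_{r\to 0}\frac{1}{\lambda(B(x,r))}\int_{B(x,r)}g\,d\lambda=g(x)<\infty.$$
The exceptional set is $\lambda$-null, hence $\mu$-null since $\mu\ll\lambda$, so in particular the $\liminf$ in the statement is finite $\mu$-a.e.

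For ($\Leftarrow$) I would stratify by the size of the $\liminf$. Put $X_k=\{x\in X:\ \liminf_{r\to 0}\mu(B(x,r))/\lambda(B(x,r))<k\}$ for $k\in\mathbb{N}$; the hypothesis says $\mu\big(X\setminus\bigcup_k X_k\big)=0$, so it suffices to show $\mu(A)=0$ for every Borel $A$ with $\lambda(A)=0$, and for that it is enough to prove $\mu(A\cap X_k)=0$ for each $k$. Fixing $k$ and $\epsilon>0$, I would choose (by outer regularity of $\lambda$) an open $U\supset A\cap X_k$ with $\lambda(U)<\epsilon$. For every $x\in A\cap X_k$ there are arbitrarily small radii $r$ with $B(x,r)\subset U$ and $\mu(B(x,r))<k\,\lambda(B(x,r))$, so these balls form a fine (Vitali) cover of $A\cap X_k$. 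Applying a covering theorem of Besicovitch type (equivalently, the Vitali covering theorem for the Radon measure $\mu$) I extract a countable subfamily $\{B(x_j,r_j)\}$ covering $A\cap X_k$ and either pairwise disjoint or with multiplicity bounded by a dimensional constant $C_n$, $n=\dim X$; in the controlled-overlap version,
$$\mu(A\cap X_k)\le\sum_j\mu(B(x_j,r_j))\le k\sum_j\lambda(B(x_j,r_j))\le kC_n\,\lambda(U)<kC_n\epsilon.$$
Letting $\epsilon\to 0$ gives $\mu(A\cap X_k)=0$, and summing over $k$ gives $\mu(A)=0$, i.e.\ $\mu\ll\lambda$.

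The hard part will be the reverse direction, and specifically the fact that the hypothesis controls only a $\liminf$ rather than a genuine limit: this forces the covering to be carried out at arbitrarily small scales, which is exactly why a fine-cover theorem with its controlled-overlap (or disjointification) conclusion is needed rather than an elementary ball-swelling argument. I would also be careful to assume $\mu$ is locally finite (Radon), so that $\mu(B(x_j,r_j))<\infty$ in the estimate above is legitimate and the stratification $\{X_k\}$ genuinely exhausts $\mu$; in the intended applications $\mu$ is a finite Borel measure, hence automatically Radon, so these regularity hypotheses hold for free.
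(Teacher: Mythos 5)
The paper does not actually prove this lemma: it is introduced with the phrase ``in use of the following well-known result'' and stated without proof (the de Guzm\'an reference \cite{gu} in the bibliography, while not cited at the lemma, is the natural source). So there is no in-paper argument to compare yours against; I will assess the proposal on its own.

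Your proof is correct. The forward direction via Radon--Nikodym plus Lebesgue differentiation is exactly right, with the observation that the $\lambda$-null exceptional set is automatically $\mu$-null because $\mu\ll\lambda$. The reverse direction is also sound: stratifying by $X_k=\{x:\liminf<k\}$, pushing $A\cap X_k$ into an open $U$ of small $\lambda$-measure, and running a Besicovitch (or Vitali-for-$\mu$) covering at arbitrarily small scales where $\mu(B(x,r))<k\,\lambda(B(x,r))$ and $B(x,r)\subset U$, then estimating $\mu(A\cap X_k)\le kC_n\lambda(U)$, is the standard argument and closes the gap that a $\liminf$ hypothesis (rather than a genuine limit) leaves open. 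Two small remarks. First, if you use the Besicovitch covering theorem you do not actually need $\mu$ to be Radon: the bounded-multiplicity subcover is intrinsic to Euclidean geometry, and the estimate only uses countable subadditivity of $\mu$ and $\mu(B(x_j,r_j))<k\lambda(B(x_j,r_j))<\infty$; Radon regularity is only needed if you instead go through the Vitali covering theorem for $\mu$. Second, Besicovitch requires bounded radii, which you get for free since $\lambda(U)<\epsilon$ forces any ball contained in $U$ to have radius $<(\epsilon/\omega_n)^{1/n}$. You already flagged the $\sigma$-finiteness/local finiteness needed for Radon--Nikodym in the forward direction; in the paper $\mu$ is a Borel probability measure, so all of these regularity hypotheses are satisfied.
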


Next, we provide the proof of Theorem \ref{bk} for the absolutely continuous case. 

\begin{proof}[Proof of Theorem \ref{bk} for the absolutely continuous case]
Fix $l>1$ and $\delta>0$. 
For each $x\in\Gamma_{l,\delta}$, we have an embedding $\Phi_x:B(l(x)^{-1})\to M$; then the pullback of $\mu$ restricted to the image of $\Phi_x$ (denoted by $\Phi_x^*\mu(\cdot):=\mu(\Phi_x(\cdot))$) is also absolutely continuous, because $\Phi_x$ is smooth and has bounded derivative. Note that for the Bowen ball, due to Lemma \ref{le1}, it can be controlled on both sides by the images of corresponding rectangles in the tangent space. So we only need to evaluate the limit of these rectangles in the tangent space. By Lemma \ref{th1}, we only need to do so for the standard volume form $\lambda$. 
For $\epsilon>0$ with $\epsilon<<\delta$, 
by direct calculation, we have 
\begin{align*}
\lambda&\bigg(\prod_{i\leq u}B_i\Big(0, \frac{\epsilon e^{-(a_i+2\epsilon)n}}{(m+1)K}\Big)\times B_{u+1}\Big(0, \frac{\epsilon e^{-2\epsilon n}}{(m+1)K}\Big)\times \prod_{i\ge u+2}B_i\Big(0, \frac{\epsilon e^{-(a_i+2\epsilon)n}}{(m+1)K}\Big)\bigg)\\
&\quad\quad\quad=C_1  (K,m,\epsilon)e^{-n(\sum_{i=1}^L d_ia_i)-2m\epsilon n};\\
\lambda&\bigg(\prod_{i\leq u}B_i\Big(0, l(m+1)\epsilon e^{-(a_i-2\epsilon)n}\Big)\times B_{u+1}\Big(0, l(m+1)\epsilon\Big)\times \\
&\quad\quad\quad\prod_{i\ge u+2}B_i\Big(0, l(m+1)\epsilon e^{-(a_i-2\epsilon)n}\Big)\bigg)
=C_2  (l,m,\epsilon)e^{-n(\sum_{i=1}^L d_ia_i)+2m\epsilon n}.\nonumber
\end{align*}
Taking the limit as $n\to\infty$, and letting $\epsilon\to 0$, for $\mu$-a.e., we obtain 
\begin{align*}
&\lim_{\epsilon\to 0}\liminf_{n\to \infty}\frac{-\log(\mu(B(\alpha, F^p_n, x, \epsilon)))}{n}\\
=&\lim_{\epsilon\to 0}\limsup_{n\to \infty}\frac{-\log(\mu(B(\alpha, F^p_n, x, \epsilon)))}{n}=\sum_{i=1}^{L}d_ia_i=\sum_{i=1}^{L} d_i \max_{{\bf t}:p({\bf t})\leq 1}\chi_i({\bf t}). 
\end{align*}
Since $l$ can be arbitrarily large and $\delta$ can be arbitrarily small, the above equalities hold for $\mu$-almost every $x\in M$.
Hence we complete the proof of Theorem \ref{bk} for the absolutely continuous case. 
\end{proof}

Thus, we finish the proof of Theorem \ref{bk}. 

By applying the idea from \cite{Ma81}, we give another proof of the inequality
\begin{align}\label{lower entropy formula}
\lim_{\epsilon\to 0}\liminf_{n\to\infty} -\frac{\log \mu\left(B(\alpha, F^p_n, x,\epsilon)\right)}{n} \geq \sum_{i=1}^{L} \gamma_i \max_{{\bf t}:p({\bf t})\leq 1}\chi_i({\bf t}). 
\end{align}
It is well known that $M$ can be smoothly embedded into $\mathbb{R}^{2m+1}$. 
We denote the embedding map by $\iota$. 
Thus, $\iota(M)$ is a smooth submanifold of $\mathbb{R}^{2m+1}$, 
we then pick a bounded tubular neighborhood $N$ of $\iota(M)$, 
which we can regard as a normal bundle of $\iota(M)$. 
For any $f\in \text{Diff}^{1+r}(M)$ preserving $\mu$, 
we can define $F\in \text{Diff}^{1+r}(N)$ such that $F\circ \iota=\iota\circ f$; 
furthermore, $\iota(M)$ is a closed invariant set of $F$, $F|\iota(M)$ preserve $\iota_*\mu$. 
Then the dynamics of $f$ on $M$ is the same (in the smooth sense) as the dynamics of 
$F|\iota(M)$ on $\iota(M)$. The idea to define $F$ is through local charts, 
and let $F$ preserve (as $f$) the base $\iota(M)$, but contract in all the normal directions. 
In this way, we can identify $\iota(M)$ with $M$, 
and without confusion still use the same notations, for example, 
$\alpha$ is action, $d(\cdot,\cdot)$ is the metric.
 Below, by Lemma \ref{th1}, we can always use $\mu$ as a volume form on $M$, 
 or induced volume form on any submanifolds of $M$.

\begin{definition}
$E$ is a normed space with the splitting $E=E_1\oplus E_2$. We call a subset $G\subset E$ is a 
$(E_1,E_2)$-graph if there exists an open $U\subset E_2$ and 
a $C^1$ map $\Psi:U\to E_1$ satisfying $G=\{x+\Psi(x)|x\in U\}$. 
The dispersion of $G$ is the number 
$$
\sup\{\|\Psi(x)-\Psi(y)\|/\|x-y\|,\,\;\forall\, x,y\in U\}.
$$
\end{definition}

For the specific $f$, we have splitting $TM=E^u\oplus E^{cs}$, 
where $E^{cs}:=E^c\oplus E^s$. Fix $\varepsilon>0$, by Egorov's theorem, 
we can choose a compact set $L_\epsilon\subset M$ with $\mu(L_\epsilon)\ge 1-\varepsilon$ such that 
the splitting is continuous with the change of $x\in L_\epsilon$. 
We also need to require $L_\epsilon$ to meet that all holonomy maps from unstable manifold 
to unstable manifold in the local charts are continuous with respect to the base points, 
and $L_\epsilon \subset \Gamma_{l,\delta}$ for sufficiently large $l$ and sufficiently small $\delta$. 

\begin{lemma}\label{le6}
For sufficiently small $w>0$, 
there exists $v>0$ such that for all $ x\in L_\epsilon,\;\mu\text{-}a.e. \;y$, 
and $d(x,y)<v$, the set $y+E^u(x)$ is a $(E^u(x),E^{cs}(x))$-graph with dispersion $\leq w$. 
Moreover, if $y\in E^{cs}(x)$, $\epsilon>0$ is small with $\epsilon<l(y)^{-1}$, 
and $n$ is sufficiently large, then  
$$
\mu_x^u((y+E^u(x))\cap B(\alpha,F^p_n,x,\epsilon))< \mu_x^u(E^u(x)\cap B(\alpha,F^p_n,x,3\epsilon)).
$$ 
Here, we use the same scale of volume form on $E^u$.
\end{lemma}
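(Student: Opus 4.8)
The plan is to establish the two assertions in turn, both resting on the uniform transversality packaged into $\Gamma_l$, on the continuity of the splitting $E^u\oplus E^{cs}$ over the compact set of large measure produced above by Egorov's theorem, and on the $(\epsilon,l)$-chart estimates of Proposition \ref{p2}. For the graph assertion: since every such base point lies in $\Gamma_l$, part (2) of Proposition \ref{pr1} together with the definition of $\Gamma_l$ gives that the angle between $E^u(x)$ and $E^{cs}(x)$ is at least $1/l$; the splitting varies continuously over the Egorov set, and by the extra requirement imposed on that set so do the unstable holonomies read in the charts. Given $w>0$, I would feed these uniformities into the standard graph-transform picture to produce a radius $v>0$ such that, for $x$ in the Egorov set and $d(x,y)<v$, the unstable slice through $y$, written in coordinates adapted to $E^u(x)\oplus E^{cs}(x)$, has tangent planes within angle $O(w)$ of $E^u(x)$, hence is the graph of a $C^1$ map from an open piece of $E^u(x)$ into $E^{cs}(x)$ whose Lipschitz constant --- its dispersion --- is at most $w$ after a further shrinking of $v$ (the labelled pair being read in the order that matches the $\dim E^u$-dimensional slice).

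For the measure inequality I would fix $y\in E^{cs}(x)$ small and $\epsilon<l(y)^{-1}$, and work with the projection $\pi$ of the slice $y+E^u(x)$ onto $E^u(x)$ along $E^{cs}(x)$, which is injective on the slice. Two points need checking. First, $\pi$ does not increase the induced volume: for the flat translate it is an isometry, and in general the dispersion bound keeps its Jacobian within a factor $1+o_w(1)$ of $1$, so with the common normalization of the volume form on $E^u$ one has $\mu\big((y+E^u(x))\cap B\big)\le (1+o_w(1))\,\mu\big(\pi((y+E^u(x))\cap B)\big)$ for every Borel $B$. Second, $\pi$ maps $(y+E^u(x))\cap B(\alpha,F^p_s,x,\epsilon)$ into $E^u(x)\cap B(\alpha,F^p_s,x,3\epsilon)$: a point $z$ of the first Bowen ball differs from $\pi(z)$ by a vector of $E^{cs}(x)$ of size at most that of $y$, and I would show that along every $t$ with $p(t)\le s$ this separation never exceeds $2\epsilon$ --- the stable component contracts, the neutral component is controlled exactly as the neutral directions in the proof of Lemma \ref{le1}, and the nonlinear contributions accumulate only to $O(\epsilon)$ --- whence $d(\alpha(t)x,\alpha(t)\pi(z))\le d(\alpha(t)x,\alpha(t)z)+d(\alpha(t)z,\alpha(t)\pi(z))\le\epsilon+2\epsilon$. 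Combining the two points, and using that $s$ is large so the $3\epsilon$-ball leaves room over the $2\epsilon$-estimate to absorb the $1+o_w(1)$, yields the asserted inequality.

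The hard part is the separation estimate along the orbit segment. The one-step chart bounds of Proposition \ref{p2} (parts iv and v) carry a factor $e^{\|t\|}$, while the window $\{t:p(t)\le s\}$ grows without bound, so using the single chart at $x$ is hopeless; the remedy is the self-improving (bootstrapping) mechanism of Pesin theory --- step through the charts one $t$-unit at a time and induct on the time, using that membership in the Bowen ball confines the whole orbit segment of $z$ (and of $\pi(z)$) to a chart-sized tube around the orbit of $x$, so each step contributes only an $O(\epsilon)$ error and the running separation stays $\le2\epsilon$. This is also where the hypotheses are spent: $\epsilon<l(y)^{-1}$ puts $y$ inside its own chart, and the continuity of the unstable holonomies over the Egorov set --- imposed precisely for this lemma --- makes all of the estimates uniform in $x$.
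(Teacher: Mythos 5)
Your proof follows essentially the same route as the paper: both reduce the measure comparison to showing that the slice Bowen ball maps, under the translation along $E^{cs}(x)$ (your projection $\pi$), into a slightly larger Bowen ball in $E^u(x)$, using the dispersion bound and the Lyapunov-chart estimates to control the separation along the whole orbit segment. The only cosmetic difference is that the paper inserts an intermediate step --- the inclusion $(y+E^u(x))\cap B(\alpha,F^p_s,x,\epsilon)\subset (y+E^u(x))\cap B(\alpha,F^p_s,y,2\epsilon)$ followed by a translation back to $x$ --- and compresses the entire orbit-segment estimate you spell out into a two-sentence remark.
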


\begin{proof}
The first assertion is straightforward, 
so we only need to prove the second result. To this end, note that 
$$
(y+E^u(x))\cap B(\alpha,F^p_n,x,\epsilon)\subset (y+E^u(x))\cap B(\alpha,F^p_n,y,2\epsilon),
$$ 
since $y+E^{u}(x)$ is a $(E^u(x),E^{cs}(x))$-graph with sufficiently small dispersion $c$. 
Translating these sets back to $x$ then yields the desired inequality.
\end{proof}

Based on Proposition \ref{main-unstable}, we can easily conclude the following lemma:
\begin{lemma}\label{le7}
For $\mu$-a.e. $x\in M$,
\begin{align*}
\lim_{\epsilon\to 0}\liminf_{n\to \infty}\frac{-\log\mu_x^u(E^u(x)\cap B(\alpha, F^p_n, x, \epsilon))}{n}=\lim_{\epsilon\to 0}\limsup_{n\to \infty}\frac{-\log \mu_x^u(E^u(x)\cap B(\alpha, F^p_n, x, \epsilon))}{n},
\end{align*}
and equals to $$\sum_{i=1}^{u}d_i \max_{{\bf t}: p({\bf t})\leq 1} \chi_i({\bf t}).$$
\end{lemma}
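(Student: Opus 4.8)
\textbf{Proof proposal for Lemma \ref{le7}.}

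The plan is to derive this from Proposition \ref{pro1} by passing from the conditional measure $\mu_x^u$ on the intermediate unstable leaf $W^u(x)$ to the induced volume measure on the linear subspace $E^u(x)$, exploiting the absolute-continuity hypothesis. First I would recall that in the absolutely continuous case we have $\gamma_i = d_i$ and that Proposition \ref{pro1} (applied with $i=u$) already gives
$$
h^u = \lim_{\epsilon\to 0}\liminf_{s\to\infty}\frac{-\log(\mu_x^u(B(\alpha,F^p_s,x,\epsilon)))}{s}
= \lim_{\epsilon\to 0}\limsup_{s\to\infty}\frac{-\log(\mu_x^u(B(\alpha,F^p_s,x,\epsilon)))}{s}
= \sum_{j\le u}d_j\max_{t:p(t)\le 1}\chi_j(t),
$$
for $\mu$-a.e. $x$. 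So the content is purely to show that replacing $\mu_x^u$ by the volume form on the flat $E^u(x)$ (transported through the chart $\Phi_x$) changes neither the $\liminf$ nor the $\limsup$.

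The key steps, in order, would be: (i) use the Lyapunov chart $\Phi_x$ of Proposition \ref{p2} to transfer the question to $\mathbb{R}^m$, where $W^u(x)$ becomes a $C^{1+r}$ graph over $R_1(x)\oplus\cdots\oplus R_u(x)$ tangent to that subspace at $0$, and $E^u(x)$ is exactly that linear subspace; (ii) invoke the sandwich of Lemma \ref{le1}, which says $\Phi_x^{-1}(B(\alpha,F^p_s,x,\epsilon))$ is trapped between two product-of-balls rectangles whose factors in the unstable directions have radii comparable to $\epsilon e^{-(a_i\pm 2\epsilon)s}$ — note these rectangles do not depend on which submanifold we intersect with, only on the ambient chart; (iii) observe that, because $\mu$ is absolutely continuous, by Lemma \ref{th1} the conditional measures $\mu_x^u$ (equivalently, $\mu$ restricted to $W^u(x)$) are absolutely continuous with respect to the induced Riemannian volume on $W^u(x)$, with Radon–Nikodym derivative bounded above and below on $\Gamma_l$; (iv) since $W^u(x)$ is a small-dispersion graph over $E^u(x)$ near $x$, the orthogonal projection $\pi:W^u(x)\to E^u(x)$ is bi-Lipschitz with constants close to $1$ (uniformly on a positive-measure set, shrinking as in the $L$/$K$ construction of Section 5), so volume on $W^u(x)$ and volume on $E^u(x)$ of the corresponding trapped sets differ by a bounded factor; (v) combine (ii)–(iv): the volume of $E^u(x)\cap B(\alpha,F^p_s,x,\epsilon)$ is, up to a multiplicative constant $C(l,m,\epsilon,K)$ independent of $s$, equal to $\prod_{i\le u}(\epsilon e^{-(a_i\mp 2\epsilon)s})^{d_i}$, exactly as in the computation already carried out in Section 5 for the full measure $\mu$; taking $-\log(\cdot)/s$, letting $s\to\infty$ and then $\epsilon\to 0$ kills all the constants and the $\epsilon s$ terms, leaving $\sum_{i\le u}d_i a_i$ for both the $\liminf$ and the $\limsup$. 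The stable statement follows verbatim by replacing $f$ with $f^{-1}$ (equivalently $\alpha(t)$ with $\alpha(-t)$) and $E^u$ with $E^s$.

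The main obstacle I expect is step (iv): making the comparison between volume on the curved leaf $W^u(x)$ and volume on the flat $E^u(x)$ genuinely uniform in the base point. One has to be careful that the dispersion bound and the Radon–Nikodym bounds hold simultaneously on a set of measure close to $1$, and that the "great enough $s$" threshold from Lemma \ref{le1} can be chosen measurably in $x$; this is exactly the kind of Egorov/$\Gamma_l$-type bookkeeping that the paper has already set up (the set $L\subset\Gamma_l$ with continuous splitting and continuous unstable holonomies in Section 5), so it should go through, but it is where the real work lies. Everything downstream — the explicit exponential asymptotics and the two inequalities for $\liminf$ and $\limsup$ — is then a routine repetition of the Section 5 computation restricted to the unstable factor.
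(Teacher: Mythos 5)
Your proposal is correct and is essentially the argument the paper is implicitly invoking: the paper's entire proof of Lemma \ref{le7} is the single sentence ``By Proposition \ref{pro1}, we can easily see the following lemma,'' so what you supply is exactly the missing glue. You correctly isolate the two points that need filling in: (a) pass from the conditional measure $\mu_x^u$ on the curved leaf $W^u(x)$ to leafwise Riemannian volume, and (b) pass from leafwise volume on $W^u(x)$ to Lebesgue volume on the flat affine subspace $E^u(x)$ via the small-dispersion graph comparison that the paper sets up just before Lemma \ref{le6}. The sandwich from Lemma \ref{le1}, followed by the same $\epsilon \to 0$, $s \to \infty$ bookkeeping carried out in the preceding computation for the full Bowen ball, then gives the claimed exponent $\sum_{i\le u} d_i a_i$, and the stable case is symmetric.

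Two minor points worth flagging. First, your step (iii) attributes the absolute continuity of the conditionals $\mu_x^u$ w.r.t.\ leafwise volume to Lemma \ref{th1}, but that lemma only characterizes absolute continuity of a measure against ambient Lebesgue; the fact that an absolutely continuous invariant measure has absolutely continuous conditionals on unstable leaves with locally bounded densities is really a Ledrappier--Young/Pesin fact (equivalence of the Pesin entropy formula with the SRB/a.c.-conditionals property). The conclusion is true and standard in this setting, so the argument goes through, but the citation is not quite to the right result. Second, one can shortcut your step (iii) entirely: rather than routing through $\mu_x^u$ and Proposition \ref{pro1}, one can apply Lemma \ref{le1} directly, push $E^u(x)$ into the chart as a small-dispersion graph over $R_1(x)\oplus\cdots\oplus R_u(x)$, and compute the Lebesgue volume of the sandwiched rectangle restricted to the unstable coordinates, which gives the same answer with one fewer layer of disintegration. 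Either route is fine; yours hews more closely to the paper's stated citation of Proposition \ref{pro1}.
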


A similar result holds for the center-stable case.

\begin{proposition}\label{propp1}
For $\mu$-a.e. $x\in M$,
\begin{align*}
\lim_{\epsilon\to 0}\liminf_{n\to \infty}\frac{-\log\mu^{cs}_x(E^{cs}(x)\cap B(\alpha, F^p_n, x, \epsilon))}{n}=\lim_{\epsilon\to 0}\limsup_{n\to \infty}\frac{-\log \mu_x^{cs}(E^{cs}(x)\cap B(\alpha, F^p_n, x, \epsilon))}{n},
\end{align*}
and equals to 
$$
\sum_{i=u+2}^{L}d_i \max_{{\bf t}: p({\bf t})\leq 1} \chi_i({\bf t}).
$$
\end{proposition}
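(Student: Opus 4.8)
The plan is to reduce the center-stable statement to the already-established stable statement (Lemma \ref{le7} and Proposition \ref{pro1} applied to $f^{-1}$) together with the elementary fact that the center direction $E^{u+1}$ contributes nothing to the exponential growth of the Bowen-ball measure. Concretely, inside a Lyapunov chart $\Phi_x$ the intersection $E^{cs}(x)\cap B(\alpha, F^p_s, x, \epsilon)$ is, by Lemma \ref{le1} applied only to the stable and neutral coordinates, squeezed between two product sets of the form
\begin{equation*}
\prod_{i\ge u+2} B_i\bigl(0, c\,\epsilon\, e^{-(a_i\pm 2\epsilon)s}\bigr) \times B_{u+1}\bigl(0, c'\epsilon\bigr),
\end{equation*}
with the $B_{u+1}$-factor of a radius that neither shrinks nor grows exponentially. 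Since $\mu$ is absolutely continuous, the conditional measure on $E^{cs}(x)$ is again absolutely continuous (with bounded density, using boundedness of $D\Phi_x$ on $\Gamma_l$ and the identification of $M$ with $\iota(M)\subset N$ as in the paragraph before Lemma \ref{le6}), so I can replace $\mu$ by Lebesgue measure via Lemma \ref{th1} and simply compute the volume of the product set. That volume is $C(l,m,\epsilon)\,e^{-s\sum_{i\ge u+2} d_i a_i \,\pm\, 2m\epsilon s}$, and taking $s\to\infty$ and then $\epsilon\to 0$ gives the asserted value $\sum_{i=u+2}^{D} d_i\, a_i$ with $\liminf=\limsup$.

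First I would make the chart reduction precise: for $x\in\Gamma_l$ the map $\Phi_x^{-1}$ carries $E^{cs}(x)\cap B(\alpha,F^p_s,x,\epsilon)$ to a set sandwiched between the two explicit rectangles above — this is literally Lemma \ref{le1} with the unstable ($i\le u$) factors deleted, and the proof there goes through verbatim since contraction/expansion estimates in the stable and neutral directions were exactly what was used. Second, I would observe that $(\Phi_x)^*$ of $\mu$ restricted to $E^{cs}$, i.e. $\mu$ viewed on the submanifold $\Phi_x(E^{cs}(x))$ with its induced volume, is absolutely continuous with locally bounded density for a.e. $x$; here I invoke Lemma \ref{th1} and the fact that on $\Gamma_l$ the charts and their inverses are uniformly Lipschitz by Proposition \ref{p2}(vi). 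Third, I would perform the volume computation, which is the same "direct calculation" displayed just before the Definition of $(E_1,E_2)$-graph, but with the index range $i\ge u+2$ in place of all $i$. Fourth, I take $\limsup$ and $\liminf$ as $s\to\infty$ on $\tfrac{-\log(\cdot)}{s}$, getting $\sum_{i\ge u+2} d_i(a_i\pm 2m\epsilon)$, and let $\epsilon\to 0$.

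I expect the only genuine subtlety — the "main obstacle" — to be justifying that the conditional/induced measure on the center-stable leaf is absolutely continuous with a density that is bounded uniformly enough along the orbit so that the constants $C(l,m,\epsilon)$ do not themselves grow exponentially in $s$. For the global measure $\mu$ this is immediate from absolute continuity plus the bounded-distortion property $l(\alpha(t)x)\le l(x)e^{\epsilon\|t\|}$ of the charts, but one must be slightly careful because $E^{cs}(x)$ is only a measurable (not continuous) subbundle; this is handled exactly as in Lemma \ref{le6}, by first restricting to a compact set $L\subset\Gamma_l$ of measure $\ge 1-\varepsilon$ on which the splitting is continuous, proving the estimate there, and then letting $\varepsilon\to0$. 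Everything else is bookkeeping parallel to the absolutely-continuous-case argument already carried out for the full Bowen ball, so the proof is short.
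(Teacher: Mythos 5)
Your proof is correct and is essentially the paper's own argument: the paper's one-sentence proof (``integration on $E^c$ and Lemma \ref{le7}'') and your direct Lebesgue-volume computation via the sandwich of Lemma \ref{le1} restricted to the center and stable coordinates are the same calculation, since in the absolutely continuous case Lemma \ref{le7} reduces, through Proposition \ref{pro1} and Proposition \ref{st}, to exactly that volume estimate. One small point of framing: by the convention fixed just before Lemma \ref{le6}, $\mu(E^{cs}(x)\cap\cdot)$ denotes the induced Lebesgue volume on the affine center-stable slice in $\mathbb{R}^{2m+1}$ rather than a conditional measure of $\mu$, so its absolute continuity is automatic and the only thing that needs control is the bi-Lipschitz distortion of the chart on $\Gamma_l$, which you correctly invoke from Proposition \ref{p2}(vi).
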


\begin{proof}
Note that the Bowen ball in the $E^c$ direction will not expand more than $\epsilon$ or contract more than subexponentially $e^{-n\epsilon}$; hence we can use integration on $E^c$ and Lemma \ref{main-stable}, then get the result.
\end{proof}

Now we give the proof of inequality \eqref{lower entropy formula}. 
For any $x\in L$, there exists $C_0>0$ such that for sufficiently large $n$, 
$$
\mu(B(\alpha, F^p_n,x,\epsilon))=C_0\int_{E^{cs}(x)}\mu_x^u((y+E^u(x))\cap B(\alpha,F^p_n,x,\epsilon)) \rd\tilde{\mu}(y), 
$$ 
where $\tilde{\mu}$ is the factor-measure on $ B(x,\epsilon)/\xi^u$ defined by 
$\displaystyle \tilde{\mu}(E)=\mu\left(\bigcup_{\xi^u(x)\in E} \xi^u(x)\right)$. 
Based on Lemma \ref{le6}, there exists a constant $C_1>0$ such that 
 $$\mu(B(\alpha, F^p_n,x,\epsilon))\leq C_1\mu^{cs}_x(E^{cs}(x)\cap B(\alpha,F^p_n,x,\epsilon))\mu^u_x((E^u(x))\cap B(\alpha,F^p_n,x,3\epsilon)). 
$$ 
Taking the logarithm of both sides and applying Lemma \ref{le7} and Proposition \ref{propp1}, 
we obtain inequality \eqref{lower entropy formula}.

\section{Open Questions and Possible Characterization}\label{set:Open Questions}

We point out some open questions and possible characterizations of slow entropy for further study. 
Of course, the first one is what we left:

\begin{question}
Prove the slow entropy formula for a general invariant measure.
\end{question}

It is not so easy to deal with this problem, as we already mentioned a little bit in the introduction. However, the next one may be a little more interesting.

\begin{question}
Generalize the definition of slow entropy to more general group actions, for example, 
free or amenable group actions, and prove an entropy formula for these actions.
\end{question}

Slow entropy is used to determine whether an action has a smooth realization \cite{KT97}, 
so one may encounter problems of general group actions. 
Then this question is meaningful. However, there are some difficulties for this, 
for instance, Theorem \ref{THD} no longer holds. 

Next, we point out a direction to generalize a slow entropy version of SMB theorem. 
In contrast to the main result in \cite{OW83} and \cite{Lin01}, it seems not possible to have an analogy of SMB Theorem for slow entropy. But, when restricting to one Weyl Chamber or an open cone, the following question arises:

\begin{question}
Can one obtain a SMB-type theorem for slow entropy 
when restricting the action to a certain Weyl Chamber or open cone? 
Specifically, consider a $\mathbb{Z}^k$ action $\alpha$ preserving an ergodic measure $\mu$, 
and pick one Weyl Chamber $\mathcal{C}$, let $\xi$ be a measurable partition on $M$. 
Define $$\xi^{\alpha,\mathcal{C}}_n:=\bigvee_{p({\bf t})\leq n,\,t\in\mathbb{Z}^k\cap \mathcal{C}}\alpha(-{\bf t})\xi.$$
For $\mu$ a.e. $x$, 
$$
\lim_{n\to\infty}\frac{-\log{\mu(\xi^{\alpha,\mathcal{C}}_n(x))}}{n}=\sum_{i}\gamma_i\max_{p({\bf t})\leq 1,{\bf t}\in \mathcal{C}} \chi_i^+({\bf t}).
$$
\end{question}

Conjecturally, it is possible to get such a type of result on any open convex cone, 
and it will have the form of the slow entropy formula but with one side. 
This question may be useful to answer the first question.

There are many characterizations for metric entropy. 
We wish to make some good analogies to those. 
Here, we only consider an extension via Poincar\'e recurrence, see, e.g., \cite{Var09}. 
Similarly, define $$R_n(x,\epsilon)=\inf\{p({\bf t})\,:\,\alpha({\bf t})x\in B(\alpha,F^p_n,x,\epsilon),\;{\bf t}\in \mathbb{Z}^k\}.$$

\begin{question}
Suppose $\alpha$ is a free abelian action, $\mu$ is ergodic, do the following two limits
$$
\lim_{\epsilon\to 0}\liminf_{n\to\infty}\frac{\log(R_n(x,\epsilon))}{n}\;\text{and}\;\lim_{\epsilon\to 0}\limsup_{n\to\infty}\frac{\log(R_n(x,\epsilon))}{n}
$$
exist and coincide for $\mu$-a.e. $x$? 
Furthermore, the limit is equal to 
$$
c(p)\sum_{i}\gamma_i \max_{{\bf t}: p({\bf t})\leq 1} \chi_i({\bf t}),
$$ 
here $c(p)$ is a constant depend only on the norm $p$, 
which may have form $vol(p)^{-\frac{1}{k}}$.
\end{question}

\appendices

\section*{Appendix}
\section{Equivalent definitions of slow entropy for smooth abelian actions}

In this appendix, we show that for smooth abelian actions, slow entropy type invariant defined through Hamming metric coincides with that defined through Bowen balls.

\begin{theorem}\label{m}
Let $\alpha$ be a $C^{1+r}$ $\mathbb Z^k$ or $\mathbb R^k$ action on a manifold $M$, preserving an ergodic probability measure $\mu$. Endow on $\mathbb R^k$ with a norm $p$, on $M$ a metric $d$. Then 
$$
sh_\mu(\alpha,p)=sh_\mu^H(\alpha,p).
$$
\end{theorem}

Notice that $sh_\mu^H(\alpha,p)$  is by definition an invariant of measurable isomorphism. However, $sh_\mu(\alpha,p)$ {\it a priori} is not. 

\begin{corollary} The Bowen entropy $sh_\mu(\alpha,p)$ is invariant under measurable isomorphism.
\end{corollary}

By using the standard suspension construction, it suffices to deal with 
the case of $\mathbb R^k$ actions. Note that, by Proposition 2 in \cite{KT97}, 
we conclude that the Hamming entropy is no greater than Bowen entropy. So we need to prove the other one under the smoothness assumption. 
Let us remark that we need the $C^{1+r}$ assumption because we are going to use Lemma \ref{le1}, which is induced by Pesin theory.

Let's also remark that it is hopeful to obtain similar results under the condition that the entropy function is of shape polytope, or under expansiveness.


Given a small $\epsilon$, we choose a finite measurable partition $\xi$ such that 
$\displaystyle \mu\left(\bigcup_{P\in\xi} \partial P\right)=0$ and 
$\operatorname{diam}(P)< \epsilon$ for any $P\in \xi$. 
Denote $U_{\beta(\delta)}$ as an open $\beta(\delta)$-neighborhood of $\displaystyle \bigcup_{P\in\xi} \partial P$ with $\mu(U_{\beta(\delta)})\le\delta^2$. 
Let $F(n)$ be the Euclidean ball centered at origin of radius $n$ defined by norm $p$, which forms a F\o lner set of $\mathbb{R}^k$.
Let $f$ be the characteristic function of the set $U_{\beta(\delta)}$, 
we denote by $M_\delta$ the set of points $x$ such that for all $n\ge N(\delta)$, 
$$
\frac{1}{|F(n)|}\int_{ F(n)}f(\alpha({\bf k})x)\,\rd {\bf k}\le \delta. 
$$ 
By Chebyshev's inequality, we obtain $\mu(M_\delta)>1-\delta$.

Here, we consider the negative or positive proportional exponents as the same exponent. This induces an equivalence class in Lyapunov exponent functionals. 
Let $\chi_i \,(1\le i\le r)$ be the Lyapunov exponents in the equivalent class corresponding to the Lyapunov decomposition of $\alpha$. 
Choose $L_i$ to be the line direction that maximizes the $\chi_i$, i.e., maximizes $\frac{|\chi_i({\bf t})|}{p({\bf t})}$. Then divide the $\mathbb R^k$ 
by closed cones ${\mathcal C}_i$ containing $L_i$. 
Note that, since the Lyapunov exponent functionals are linear, 
it is possible to choose these $L_i$s and thus have a division such that the Lebesgue measure of the intersection of each cone with the ball centered at the origin of radius $R$ in norm $p$ is at least $c$ times the Lebesgue measure of the ball for any $c$. We refer to each such cone as a $c$-cone.

The following estimate is a part of the proof in the rank one case, see (1.3) in \cite{Ka80}.

\begin{lemma}\label{l1}
Given $\epsilon>0$ small, and a code $\omega$ of length $n$, then the number of the codes that have Hamming distance to $\omega$ less than $\epsilon$ is $O(e^{(-2n\epsilon\log\epsilon)})$. What's more, similar result also holds in continuous cases.
\end{lemma}

Consider a minimal cover $\mathcal K_n$ of a subset $K_{\epsilon,n}\subset M_\delta$ by Hamming balls $B_H(x, F^p_{n},c\epsilon^{2k})$ with $\mu(K_{\epsilon,n})\ge 1-2\delta$. We are going to prove that for any $x\in M_\delta$, the Hamming ball $B_H(x, F^p_{n},c\epsilon^{2k})$ can be covered by certain amount of Bowen balls by disregarding a subset of very small conditional measure. This is essential in the subsequent argument. In fact, we prove that for most points in a Hamming ball, their codes will be close in certain directions, thus most of their iterations are in the same atom of the partition $\xi$.

\begin{lemma}\label{l2}
For any $\epsilon\ll \frac{1}{r^2k^4}$, 
there exists a small $\ell>0$ such that for any $x\in K_{\epsilon,n}\subset M_{\delta}$, 
there exists a subset $X\subset B_H(x, F^p_{n},c\epsilon^{2k})$ 
with $\mu(X)\ge (1-\ell)\mu(B_H(x, F^p_{n},c\epsilon^{2k}))$ where, 
for any $y\in X$, the Hamming distance between $y$ and $x$ 
along any direction $L_i$ is at most $\sqrt{\epsilon}$. 
\end{lemma}

\begin{proof}

We abbreviate $B_H(x, F^p_{n},c\epsilon^{2k})$ as $B_H$ (the Hamming ball) for short. 

Pick an $\epsilon$-cone $\mathcal C\subset \mathcal C_i$ around $L_i$; 
the space of all the rays inside the $\mathcal C\cap F^p_{n}$ can be canonically identified with 
a $k-1$ dimensional disc $\mathcal D$ in the unit $k$-sphere. 
Let $m$ be the conditional measure of Lebesgue measure restricted on $\mathcal D$. 
Then we have that $m(\mathcal D)= \tau \epsilon^{k-1}$, 
where $\tau$ is a constant depending only on the dimension. 
Below, let $\mathbf a\in\mathcal D$ denote a ray segment in $\mathcal C\cap F^p_{n}$.

Define a function $\Phi$ on the product space $B_H\times \mathcal D$ by 
$$
\Phi(y,\mathbf{a}):=d^H_{\mathbf a}(x,y),\; \text{for}\; (y,\mathbf a)\in B_H\times \mathcal D.
$$ 
Here, $d^H_{\mathbf a}(x,y)$ is defined to be the Hamming distance of points $x$ and $y$ along the ray $\mathbf a$.

We claim that there exists an $\mathbf a\in \mathcal D$ such that 
there exists a subset $X_i\subset B_H$ with $\mu(X_i)\ge (1-\frac{1}{2rk})\mu (B_H)$ where, 
for any $y\in X_i$, $\Phi(y,\mathbf a)\le \sqrt{\epsilon}$. We prove it by contradiction.

Assume that for any $\mathbf a$, 
$\mu(\{y:\Phi(y,\mathbf a)>\sqrt{\epsilon}\})\ge \frac{1}{2rk}\mu(B_H)$. We want to estimate the integral 
$$
\int_{B_H\times \mathcal D}\Phi(y,\mathbf a)\,\rd(\mu\times m).
$$ 
On one hand, we have
\begin{equation}\label{e1-2}
\int_{B_H\times \mathcal D}\Phi(y,\mathbf a)\,\rd (\mu\times m)\ge \frac{1}{2rk}\mu(B_H)\sqrt{\epsilon}\tau\epsilon^{k-1}=\frac{\tau}{2rk}\mu(B_H)\epsilon^{k-\frac{1}{2}}.
\end{equation}
On the other hand, for fixed $y\in B_H$, as $d^H_{F^p_{n}}(x,y)\le c\epsilon^{2k}$, 
then by the choice of $c$, 
$$
|F(n)\cap \mathcal C_i|\ge c|F(n)|,
$$ 
it follows that $d^H_{{\mathcal C}_i}(x,y)\le \epsilon^{2k}$. 
Thus by the Chebychev's inequality 
$$
m\left(\{\mathbf a\in\mathcal D:\Phi(y,\mathbf a)>\epsilon\}\right)\le \frac{\epsilon^{2k}}{\tau\epsilon^{k-1}\epsilon}=\frac{\epsilon^{k}}{\tau}.
$$
Since $\Phi(y,\mathbf a)\leq 1$, we have 
\begin{align}
& \int_{B_H\times \mathcal D}\Phi(y,\mathbf a)\,\rd (\mu\times m)\nonumber\\
\leq & \int_{B_H\times \mathcal (D\cap\{\mathbf a\in\mathcal D:\Phi(y,\mathbf a)>\epsilon\} )}\Phi(y,\mathbf a)\,\rd (\mu\times m)+\int_{B_H\times \mathcal (D\backslash \{\mathbf a\in\mathcal D:\Phi(y,\mathbf a)>\epsilon\} )}\Phi(y,\mathbf a)\,\rd (\mu\times m)
\nonumber\\
\leq & \mu(B_H)\frac{\epsilon^{k}}{\tau}\cdot 1+\mu(B_H)(m(\mathcal D)-\frac{\epsilon^{k}}{\tau})\cdot \epsilon\nonumber\\
\leq & \mu(B_H)\left(\frac{1}{\tau}+\tau\right)\epsilon^{k}.\label{e2-2}
\end{align}
Combining (\ref{e1-2}) with (\ref{e2-2}), 
we obtain $\epsilon\geq \left(\frac{\tau^2}{2rk(1+\tau^2)}\right)^2$, 
which contradicts the smallness of $\epsilon$ chosen at the beginning.

Applying the above argument to each Lyapunov exponent $\chi_i$, 
we obtain $r$ subsets $X_i\subset B_H$ for $1\le i\le r$. 
Let $\displaystyle X=\bigcap_{i=1}^r X_i$, then we obtain $\mu(X)\ge (1-\frac{1}{k})\mu(B_H)$, and for any $y\in X$, 
we have $d^H_{\cup \mathbf a_i}(x,y)\le \sqrt{\epsilon}$.
\end{proof}

Lemma \ref{le1} tells us that Bowen ball typically is determined by certain directions 
corresponding to the coarse Lyapunov exponents. 

\begin{lemma}\label{l3}
With the same notations as in Lemma \ref{l2}, there exist constants $K,C>0$ such that 
the set $X$ can be covered by at most $Ke^{(-Cn\sqrt{\epsilon}\log\epsilon)}$ Bowen balls of the form $B_d(y,F^p_{n},\sqrt{\epsilon})$ for sufficiently large $n$.
\end{lemma}

\begin{proof}
Without loss of generality, we can assume $X\subset B_H(x)\cap B_d(x,\epsilon)$. 

By Lemma \ref{le1}, a Bowen ball is typically determined by certain directions corresponding 
to the coarse Lyapunov exponents. For these directions, we are flexible to 
allow an $\epsilon$ fluctuation, which will at most give a multiplicative error term $K_1e^{C_1n\epsilon}$ with some constants $K_1, C_1>0$ for large $n$.

In the case of zero Lyapunov exponents, along these directions, 
the Bowen ball expands at most $K_1e^{C_1n\epsilon}$ for sufficiently large $n$, 
and hence this also generates a multiplicative error term $K_1e^{C_1n\epsilon}$.

Combining Lemma \ref{l1}, Lemma \ref{l2} and the fact that each atom of the partition $\xi$ has diameter less than $\epsilon$, we conclude that for sufficiently large $n$, 
$X$ can be covered by at most $Ke^{(-Cn\sqrt{\epsilon}\log\epsilon)}$ Bowen balls $B_d(y,F^p_{n},\sqrt{\epsilon})$, where $K=K_1^2$ and $C=C_1^2$.
\end{proof}

Based on the argument from Lemma \ref{l3}, we conclude the following result. 

\begin{proposition}\label{a-p1}
There exists a constant $c$ determined by the angles between the Lyapunov planes,  
a constant $C$ that depends on $r$, 
and a small constant $\ell$ which appears in Lemma \ref{l2} and depends on $\epsilon,k$ such that 
$$
S_d(\alpha, F^p_{n}, \sqrt{\epsilon},\delta)\le Ke^{(-Cn\sqrt{\epsilon}\log\epsilon)}S_\xi^H(\alpha,F^p_{n},c\epsilon^{2k},(1-\ell)(1-\delta)+\delta).
$$
\end{proposition}

We are now ready to prove Theorem \ref{m}. 

\begin{proof}[Proof of Theorem \ref{m}]
The Theorem \ref{m} follows immediately by combining Proposition \ref{a-p1} 
above with Proposition 2 in \cite{KT97}, which gives the estimation in the opposite direction. Thus, we finish the proof of this theorem. 
\end{proof}

\end{document}